\newcommand{\diff}[2]{\mbox{{\rm Diff}{${\,}_{#1}({\mathbb C}^{#2},0)$}}}
\newcommand{\diffh}[2]{\mbox{$\widehat{\rm Diff}{{\,}_{#1}({\mathbb C}^{#2},0)}$}}
\newcommand{\cn}[1]{\mbox{(${\mathbb C}^{#1},0$)}}
\newcommand{\pn}[1]{{\mathbb P}^{#1}({\mathbb C})}
\newtheorem{pro}{Proposition}[section]
\newtheorem{teo}{Theorem}[section]
\newtheorem*{main}{Main Theorem}
\newtheorem{cor}{Corollary}[section]
\newtheorem{lem}{Lemma}[section]
\theoremstyle{remark}
\newtheorem{rem}{Remark}[section]
\theoremstyle{definition}
\newtheorem{defi}{Definition}[section]
\begin{document}

\title[The solvable length of groups of local diffeomorphisms]
{The solvable length of groups of local diffeomorphisms}

\author{Javier Rib\'{o}n}
\address{Instituto de Matem\'{a}tica, UFF, Rua M\'{a}rio Santos Braga S/N
Valonguinho, Niter\'{o}i, Rio de Janeiro, Brasil 24020-140}
\thanks{e-mail address: javier@mat.uff.br}
\thanks{MSC class. Primary: 37F75, 20F16; Secondary: 20F14, 32H50}
\thanks{Keywords: local diffeomorphism, derived length, solvable group, nilpotent group}
\maketitle

\bibliographystyle{plain}
\section*{Abstract}
We are interested in the algebraic properties of groups of local biholomorphisms and their 
consequences. A natural question is whether the complexity of solvable groups is bounded 
by the dimension of the ambient space. In this spirit we
show that $2n+1$ is the sharpest upper bound for the derived length of solvable subgroups
of the group $\mathrm{Diff}({\mathbb C}^{n},0)$ of local complex analytic diffeomorphisms
for $n=2,3,4,5$.
\section{Introduction}
It is a well known result of Zassenhaus (1938) that the soluble length (also called derived length)
of a solvable group of $\mathrm{GL}(n,F)$ is bounded by a function of
$n$ that does not depend on the field $F$ or the group \cite{Zassenhaus}.
The sharpest upper bound $\rho (n)$
for the derived length of linear groups in dimension $n$ is called the
Newman function \cite{Newman}.

Let ${\rm Diff} ({\mathbb C}^{n},0)$ be the group of germs of complex analytic diffeomorphisms defined in
a neighborhood of $0 \in {\mathbb C}^{n}$. We are interested in understanding
how the algebraic complexity of solvable subgroups of local diffeomorphisms
is bounded in terms of the dimension $n$.
This is a non-obvious question since the group of local diffeomorphisms is infinitely dimensional.
In line with such a goal we study
the solvability properties of subgroups of $\diff{}{n}$ and more precisely
the sharpest upper
bound for the derived length of solvable subgroups of $\diff{}{n}$, i.e.
the function
\[ \psi(n) = \max \{ \ell (G) : G \ \mathrm{is} \ \mathrm{a} \ \mathrm{solvable} \
\mathrm{subgroup} \ \mathrm{of} \ {\rm Diff} ({\mathbb C}^{n},0) \} \]
where $\ell (G)$ is the derived length of $G$ (cf. Definition \ref{def:length}).
Mitchael Martelo and the author proved $\psi (n) \leq \rho (n) + 2n-1$ in \cite{JR:arxivdl}.
For example the function $\rho (n) + 2n-1$ is equal to
$2$, $7$, $10$, $13$ and $16$ for $n=1$, $2$, $3$, $4$ and $5$ respectively.
It is known that $2$ is a sharp bound for $n=1$
(cf. \cite{Loray5} \cite{Ilya-Yako}[Theorem 6.10 and Remark 6.19]),
in particular every solvable subgroup of $\diff{}{}$ is metabelian.
No other value of the function $\psi$ was known. In this paper we prove:
\begin{main}
\label{teo:main}
Fix $n \in \{2,3,4,5\}$.
Let $G$ be a solvable subgroup of $\diff{}{n}$ (or $\diffh{}{n}$).
Then its derived length $\ell (G)$ is at most $2n+1$.
Moreover there exists a solvable subgroup
$G$ of $\diff{}{n}$ such that $\ell (G)=2n+1$.
\end{main}
In other words we obtain $\psi (n)=2n+1$ for $2 \leq n \leq 5$.
The notation $\diffh{}{n}$ stands for the group of formal diffeomorphisms
(cf. section \ref{sec:fordif}). We just remark now that our techniques
work as well in the analytic or the formal category.
Notice that $\psi (n)=2n+1$ does not hold for $n=1$.

The integrability properties of differential equations are related to
the algebraic properties, and specially
solubility, of holonomy groups of local diffeomorphisms.
More precisely integrability
can be interpreted in terms
of certain (Galois, holonomy) groups of symmetries of the equation.
This connection is explicit in the context of differential Galois theory.
For example Kolchin showed that a linear differential equation admits a liouvillian extension by
integrals and exponentials of integrals if and only if its Galois group is
solvable \cite{Kolchin-PV}.
This phenomenon is not restricted to linear differential equations.
We can find other instances of the principle in
the study of analytic Hamiltonian systems
\cite[Ziglin]{Ziglin1, Ziglin2} \cite[Morales-Ruiz, Ramis, Simo]{Mor-Ram-Simo}, codimension $1$ analytic foliations
\cite[Mattei and Moussu]{MaMo:Aen}
\cite[Paul]{Paul:pre}
and higher codimension analytic foliations \cite[Rebelo and Reis]{RR:arxiv}.

The Main Theorem
can be applied to study the representations of solvable groups in
the group of real analytic diffeomorphisms of a manifold.
In particular it provides a negative criterion for a solvable group
to be embedded in $\diff{}{n}$.
It is part of a program intended to describe algebraic properties
of solvable groups of (complex, real) analytic diffeomorphisms of (complex, real)
analytic manifolds.
We present an application in the real analytic case.
\begin{lem}
\label{lem:dlfp}
Let $G$ be a solvable group of real analytic diffeomorphisms of a
connected manifold $M^{n}$ of dimension $n$. Suppose that $G$
has a global fixed point $p$, i.e. $p$ is a fixed
point of every element of $G$. Then  $\ell (G) \leq \psi (n)$.
\end{lem}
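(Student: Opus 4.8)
The plan is to reduce the global statement about real analytic diffeomorphisms of $M^n$ to the local statement about $\diff{}{n}$ by localizing at the common fixed point $p$ and complexifying. The key observation is that $\ell(G)$ is determined by the group structure alone, so it suffices to produce an injective group homomorphism from $G$ into $\diff{}{n}$ (or into $\diffh{}{n}$), after which the Main Theorem via the definition of $\psi$ gives $\ell(G) \leq \psi(n)$ immediately, since $\ell$ can only decrease under passing to a subgroup but is preserved under injections.

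First I would pass from $G$ to the germs at $p$: since every $g \in G$ fixes $p$, the assignment sending $g$ to the germ of $g$ at $p$ is a well-defined group homomorphism into the group of germs of real analytic diffeomorphisms of $(M^n, p) \cong (\mathbb{R}^n, 0)$. Because $M$ is connected and the elements are real analytic, a diffeomorphism whose germ at $p$ is the identity must be the identity on all of $M$ by the identity principle for real analytic maps on a connected manifold; hence this homomorphism is injective, so $\ell(G)$ is unchanged. Next I would complexify: a germ of a real analytic diffeomorphism of $(\mathbb{R}^n,0)$ extends to a germ of a holomorphic diffeomorphism of $(\mathbb{C}^n,0)$, and this extension is unique and compatible with composition, giving an injective homomorphism from the group of real analytic germs at $p$ into $\diff{}{n}$. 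Composing the two injections embeds $G$ as a subgroup of $\diff{}{n}$.

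With $G$ realized as a subgroup of $\diff{}{n}$ and still solvable with the same derived length, the definition of $\psi(n)$ as the supremum of $\ell(H)$ over solvable subgroups $H$ of $\diff{}{n}$ yields $\ell(G) \leq \psi(n)$, which is the claim. I would emphasize that no appeal to the explicit value $2n+1$ is needed here: the statement is phrased in terms of $\psi(n)$ precisely so that it holds in every dimension, with the Main Theorem supplying the concrete bound only for $n \in \{2,3,4,5\}$.

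The main obstacle, and the only genuinely nontrivial point, is the injectivity of the localization map, i.e. ensuring that distinct elements of $G$ have distinct germs at $p$. This rests on the real analytic identity principle together with the connectedness of $M$: if two real analytic diffeomorphisms agree to infinite order at $p$ then they agree on the connected component of $p$, which is all of $M$. I would state this carefully, since it is the step where the hypotheses that $M$ is connected and that $G$ acts by real analytic (rather than merely smooth) diffeomorphisms are both used; the complexification step is routine once one recalls that real analytic germs have unique holomorphic extensions respecting composition.
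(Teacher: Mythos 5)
Your argument is correct and follows essentially the same route as the paper: localize at the common fixed point to get an injective homomorphism into the group of real analytic germs at $p$ (injective by connectedness and the real analytic identity principle, a point the paper asserts without elaboration), then complexify to land in $\mathrm{Diff}({\mathbb C}^{n},0)$, and conclude from the definition of $\psi(n)$. Your added justification of the injectivity of the localization map is a welcome refinement of the paper's terser proof, but there is no substantive difference in method.
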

\begin{proof}
We include the proof for the sake of completeness. Consider the sequence of maps
\[ G \stackrel{i}{\hookrightarrow} \mathrm{Diff}^{\omega} (M^{n}, p) \stackrel{j}{\hookrightarrow} \mathrm{Diff} \cn{n} \]
where $\mathrm{Diff}^{\omega} (M^{n}, p)$ is the group of germs of real analytic diffeomorphisms defined in a
neighborhood of $p$ in $M^{n}$. The map $i$ associates to $\phi \in G$ its germ in a
neighborhood of $p$. The map $j$ is obtained by complexification of $M^{n}$, in this way
we can interpret germs of real analytic maps as germs of holomorphic maps.
Both maps $i$ and $j$ are injective. Hence we obtain
$\ell (G)= \ell ( (j \circ i) (G)) \leq \psi (n)$.
\end{proof}
The next result is an immediate consequence of the Main Theorem and Lemma \ref{lem:dlfp}.
\begin{cor}
\label{cor:realan}
Let $G$ be a solvable group of real analytic diffeomorphisms of a connected manifold $M^{n}$
where $2 \leq n \leq 5$. Suppose that $G$ has a global fixed point $p$.
Then  $\ell (G) \leq 2 n +1$.
\end{cor}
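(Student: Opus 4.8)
The plan is to combine Lemma \ref{lem:dlfp} with the Main Theorem; the statement is a direct corollary and requires no new argument. First I would invoke Lemma \ref{lem:dlfp}: since $G$ is a solvable group of real analytic diffeomorphisms of the connected manifold $M^{n}$ possessing a global fixed point $p$, that lemma yields $\ell (G) \leq \psi (n)$, where $\psi$ is the sharp upper bound function introduced in the Introduction. The hypotheses of the lemma --- solvability, real analyticity, connectedness of $M^{n}$, and existence of the global fixed point --- are precisely the standing hypotheses of the corollary, so the lemma applies verbatim with no adjustment.

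Next I would feed in the value of $\psi (n)$ supplied by the Main Theorem. For $n \in \{2,3,4,5\}$ that theorem asserts $\psi (n) = 2n+1$, and since the corollary restricts to the range $2 \leq n \leq 5$ this value is available for every admissible $n$. Substituting $\psi (n) = 2n+1$ into the inequality $\ell (G) \leq \psi (n)$ produces $\ell (G) \leq 2n+1$, which is exactly the desired conclusion.

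The only point deserving attention --- and it is not really an obstacle --- is the matching of ranges: the Main Theorem computes $\psi$ only for $n \in \{2,3,4,5\}$, and the corollary is correspondingly stated for $2 \leq n \leq 5$. As long as these coincide the chain of inequalities closes and nothing further is needed. The genuine difficulty in the paper lies entirely in establishing the Main Theorem (both the bound $\ell (G) \leq 2n+1$ and its sharpness); once that is in hand, together with the injections $i$ and $j$ of Lemma \ref{lem:dlfp} that transport the problem into $\mathrm{Diff}({\mathbb C}^{n},0)$ via complexification, the corollary follows immediately.
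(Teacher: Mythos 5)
Your proposal is correct and matches the paper exactly: the paper presents this corollary as an immediate consequence of Lemma \ref{lem:dlfp} (which gives $\ell(G) \leq \psi(n)$ via germification at $p$ and complexification) and the Main Theorem (which gives $\psi(n) = 2n+1$ for $2 \leq n \leq 5$). Nothing further is needed.
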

Some interesting results about the algebraic properties of solvable groups
of area-preserving real analytic diffeomorphisms of surfaces have been obtained by Franks and Handel.
For instance they showed that for the $2$-sphere such a group
is always virtually-abelian \cite{Franks-Handel:va}.
%

As a consequence of Corollary \ref{cor:realan}
it is interesting to study the existence of common fixed points
or finite orbits for a solvable group of analytic diffeomorphisms (and its derived subgroups).
This point of view, finding a finite orbit of the group to profit from the algebraic
properties of groups of local diffeomorphisms, was used by Ghys to show that any
nilpotent subgroup of real analytic diffeomorphisms of the $2$-sphere is
metabelian \cite{Ghys-identite}.
Another application of groups of local diffeomorphisms to a global problem is the study
of faithful analytic actions of mapping class groups of surfaces on surfaces \cite{Cantat-Cerveau}.
\subsection{Pro-algebraic closure of a group of local diffeomorphisms}
One of the main ingredientes in the paper is the notion of pro-algebraic
group of (formal) local diffeomorphisms. These groups are (possibly)
infinite dimensional groups that are obtained as projective limits of affine
algebraic groups.
The concept of pro-algebraic group is the analogue in the context of local diffeomorphisms
of algebraic matrix groups.
The upside of this concept is the existence of
many similarities
between the theories of pro-algebraic groups and linear algebraic groups.
Pro-algebraic groups have independent interest
besides the study of the derived length, for instance
groups defined by invariance properties are typically pro-algebraic
(Remark \ref{rem:invpro}).
Section \ref{sec:forgroup} is devoted to introduce pro-algebraic groups,
explain their basic properties, characterizing them in certain cases and
to provide methods to construct examples (Proposition \ref{pro:consag}).

The main difficulty to work with subgroups of ${\rm Diff} ({\mathbb C}^{n},0)$
is the infinite dimensional nature of ${\rm Diff} ({\mathbb C}^{n},0)$.
Anyway any subgroup $G$ of ${\rm Diff} ({\mathbb C}^{n},0)$
is contained in a minimal pro-algebraic group $\overline{G}^{(0)}$ containing $G$,
the so called (pro-)algebraic closure of $G$
(\cite{JR:arxivdl}, see also  section \ref{sec:forgroup}).
The pro-algebraic closure is the analogue in the context of local diffeomorphisms
of the Zariski closure of linear groups.
Moreover the derived lengths of $G$ and $\overline{G}^{(0)}$ coincide \cite{JR:arxivdl}.
The pro-algebraic closure of ${\rm Diff} ({\mathbb C}^{n},0)$ is called
group of formal diffeomorphisms.
With this point of view we can apply
techniques of Lie theory and the theory of linear
algebraic groups to $\overline{G}^{(0)}$.
For instance
we can associate a Lie algebra ${\mathfrak g}$
(of formal vector fields) to $\overline{G}^{(0)}$ \cite{JR:arxivdl}.
We replace $G$ with $\overline{G}^{(0)}$ in the remainder of the introduction.
Even if we do not define $\overline{G}^{(0)}$ here, for now the reader
can consider it as a technical device intended to place $G$ in
the context of algebraic group theory. We prefer to introduce some of the properties
of $\overline{G}^{(0)}$ and to give an idea
of how they can be applied to prove the Main Theorem
instead of a more formal presentation.
The details can be found in next sections.
We suppose also that  $\overline{G}^{(0)}$ is a subgroup of
$\mathrm{Diff} ({\mathbb C}^{n},0)$. This is not true in general
since $\overline{G}^{(0)}$ is a group of formal diffeomorphisms but
the differences between this case and the general one are purely technical.

\subsection{First estimation of $\psi$}
We make an outline of the paper in the remainder of the introduction.
We explain the ideas behind our approach, formal definitions and proofs
are delayed for next sections.

The algebraic properties of the Lie algebra ${\mathfrak g}$ of $G$ and $G$
are analogous if $G$ is connected, i.e. if
the image of the map $j^{1}: G \to \mathrm{GL}(n,{\mathbb C})$,
that sends every diffeomorphism in its linear part, is connected in
the Zariski topology (or equivalently in the usual topology
since $j^{1} G$ is a linear algebraic group).
In this way we proved $\ell (G) \leq 2n$ if $G$ is a connected subgroup of
${\rm Diff} ({\mathbb C}^{n},0)$;
moreover we obtain $\ell (G) \leq 2n-1$ if $j^{1} G$ is a group of unipotent matrices
\cite{JR:arxivdl}[Theorem \ref{teo:uni}].

The previous results are proved by showing their mirror versions in the
Lie algebra setting. This is a simpler task since we classified
Lie algebras of formal vector fields in \cite{JR:arxivdl}[Theorem 6].
Even if ${\mathfrak g}$ is an infinite dimensional complex vector space
the key invariant is the dimension of the finite dimensional
$\hat{K}_{n}$-vector space $\hat{K}_{n} \otimes_{\mathbb C} {\mathfrak g}$,
where $\hat{K}_{n}$ is the field of fractions of the ring of formal power series with
complex coefficients in $n$ variables.
The other main ingredient of the classification is also going to play a fundamental
role in this paper; it is
the field of formal meromorphic first integrals
\[ {\mathcal M} = \{ g \in \hat{K}_{n} : X (g) =0 \ \forall X \in {\mathfrak g} \} \]
of ${\mathfrak g}$.
More precisely, the classification relies on understanding the above invariants
for the Lie algebras in the derived series of ${\mathfrak g}$.

Of course in general $j^{1} G$ is not connected.
A naive approach to the general case is
splitting the problem in two simpler ones, restricting the
setting to some class of ``connected" subgroups of
${\rm Diff} ({\mathbb C}^{n},0)$ and then trying to reduce
the general case to the previous one by solving a linear problem.

Let us explain how to implement the aforementioned approach.
Given a subgroup $M$ of
$\mathrm{GL}(n,{\mathbb C})$ we denote by
$M_{u}$ its subset of unipotent matrices.
Notice that
if  $M$  is solvable then $M_{u}$ is a normal subgroup of $M$
(cf. Proposition \ref{pro:cr}).
We define the functions
\[ \psi_{1} (n) = \max \{ \ell (M/M_{u}) : M \ \mathrm{is} \ \mathrm{a} \ \mathrm{solvable} \ \mathrm{subgroup} \
\mathrm{of} \ \mathrm{GL}(n,{\mathbb C}) \}  \]
and
\[ \psi_{2} (n) = \max \{ \ell (G) : G \leq {\rm Diff} ({\mathbb C}^{n},0), \
\ell (G) < \infty \ \mathrm{and} \
j^{1} G =  (j^{1}G)_{u} \} . \]
Since $(j^{1}G)_{u}$ is a normal subgroup of $j^{1} G$, the
unipotent part
\[ G_{u}:= \{ \varphi \in G:  j^{1} \varphi \in (j^{1}G)_{u} \} \]
of $G$ is a normal subgroup of $G$ if $G$ is a solvable subgroup of
$\mathrm{Diff} ({\mathbb C}^{n},0)$.
It is clear that $\ell (G) \leq \ell (G/G_{u}) + \ell (G_{u})$.
In this way the problem splits in two simpler cases, namely a linear one
since $G/G_{u}$ is isomorphic to $j^{1}G /(j^{1}G)_{u}$
and the study of the connected group $G_{u}$ whose algebraic properties can
be interpreted in terms of a Lie algebra of (formal) nilpotent vector fields.
We obtain $\ell (G/G_{u})  \leq \psi_{1}(n)$ and
$\ell (G_{u}) \leq \psi_{2}(n)$ if $G$ is solvable.

The groups $G/G_{u}$ and $j^{1} G / (j^{1}G)_{u}$ are isomorphic to a completely reducible linear subgroup
of $\mathrm{GL}(n,{\mathbb C})$ (cf. section \ref{sec:linear}). In fact
$\psi_{1}$
coincides with the function $\sigma$ that determines the sharpest
upper bound for the derived length of completely reducible subgroups
of $\mathrm{GL}(n,{\mathbb C})$ (cf. Proposition \ref{pro:cr}).
The function $\sigma$ was calculated by Newman \cite{Newman}, for example
we have $\sigma(2)=4$, $\sigma (3)=\sigma (4) = \sigma (5)=5$.
The function $\psi_{2}(n)$ is equal to $2n-1$
\cite{JR:arxivdl}[Theorem \ref{teo:uni}].
The function $\sigma (n) + 2n-1$ can be considered as a baseline
upper bound for $\psi (n)$ in which the
reduction of the linear part of $G$ to a unipotent subgroup and
the reduction of the unipotent part $G_{u}$
of $G$ are treated as if they were independent processes.
As a consequence it is not surprising that this bound is not sharp.
Notice that the Main Theorem implies
$\psi (2) = (\sigma (2) + 2 \cdot 2 - 1) -2$ and
$\psi (n) = (\sigma (n) + 2n-1) -3$ for $n \in \{3,4,5\}$.
\subsection{Automorphisms of a Lie algebra of local vector fields}
In order to calculate $\psi$ we need to follow the evolution of the
unipotent parts of the derived subgroups of $G$. We can still profit
from the rigid structure of the Lie algebra ${\mathfrak g}$ of $G$
or the Lie algebra ${\mathfrak g}_{N}$ of $G_{u}$. Indeed we consider
the adjoint representation of $G$ on the Lie algebra ${\mathfrak g}$;
it induces a representation of $G$ on ${\mathfrak g}_{N}$.
The study of such action allows to improve the bound
$\sigma (n) + 2n-1$ to obtain the sharpest bound $2n+1$.


A prerequisite in our perspective is understanding the action of the
derived groups of $G$ on the field ${\mathcal M}$
of formal meromorphic first integrals of ${\mathfrak g}_{N}$.
On the one hand this action looks simpler since the transcendence degree
$\mathrm{trdeg}({\mathcal M}/{\mathbb C})$ of
the field extension ${\mathcal M}/{\mathbb C}$ is less than $n$ if ${\mathfrak g}_{N} \neq 0$
and hence we obtain an analogous problem in lower dimension.
On the other hand the group induced in the bijections of ${\mathcal M}$,
by the action of $G$ on ${\mathcal M}$,
is not necessarily isomorphic to a subgroup of
$\mathrm{Diff} ({\mathbb C}^{m},0)$ for some $m<n$ (or even to a subgroup
of the formal completion of $\mathrm{Diff} ({\mathbb C}^{m},0)$).
For instance $G$ can act on ${\mathcal M}$ as a
subgroup of M\"{o}bius transformations.
Therefore the study of the action of $G$ on ${\mathcal M}$
requires leaving the natural context of the problem and in particular the
local setting.
In spite of this we can describe the action of $G$ on ${\mathcal M}$
if $\mathrm{trdeg}({\mathcal M}/{\mathbb C}) \leq 1$.
The reason is the very simple structure of the fields of meromorphic first integrals
of codimension $1$ germs of singular foliations defined in a neighborhood of
the origin in ${\mathbb C}^{m}$ as a consequence of results of
Mattei-Moussu \cite{MaMo:Aen} and Cerveau-Mattei \cite{Ce-Ma:Ast}.
These results generalize to our formal setting.
Let us say a word about the hypothesis $n \leq 5$ on the dimension.
It is related to the aforementioned issues with fields of first integrals.
Indeed in the case $2 \leq n \leq 5$ we can reduce the proof of
$\psi (n) \leq 2n+1$ so it suffices to consider actions on fields of
(formal) meromorphic first integrals of transcendence degree at most $1$.
The other inequality, that completes the proof of the Main Theorem
is obtained by exhibiting an example
$G^{n}$ of solvable subgroup of $\mathrm{Diff}({\mathbb C}^{n},0)$
of derived length $2n+1$ for any $n \geq 2$.

\subsection{Remarks}
%
We associate to a solvable subgroup $G$
of $\mathrm{Diff} ({\mathbb C}^{n},0)$ some objects,
for instance the Lie algebra ${\mathfrak g}_{N}$ and the
induced representation of $G$ on ${\mathfrak g}_{N}$,
that make it easier to study its derived length.
The tools borrowed from the theory of linear algebraic groups allow
to keep an intrinsic point of view.
For example at no point we need to resort to the theory of normal forms
in order to prove $\psi (n) \leq 2n+1$.
Calculations at the jet level are not required either
since such structure is embedded in the construction of the pro-algebraic
closure of $G$.

The proof of $\psi(n) \leq 2n+1$ for $n \leq 5$ provides clues for
possible groups of maximal derived length and it
was useful to construct the actual examples.
Our techniques provide information about the algebraic properties of
a group of local complex analytic diffeomorphisms.
Their interest goes  beyond the
study of the derived length.
\section{Fundamental constructions}
In this section
we introduce the pro-algebraic subgroups of formal diffeomorphisms and their properties.
Although there are new bits, many of the notations and results are
borrowed from \cite{JR:arxivdl}.
\subsection{Formal diffeomorphisms}
\label{sec:fordif}
First of all we introduce some definitions.
\begin{defi}
We denote by $\hat{\mathcal O}_{n}$ the local ring
${\mathbb C} [[x_{1},\hdots,x_{n}]]$ of formal power series in $n$ variables with complex
coefficients.
We denote by ${\mathcal O}_{n}$ the subring ${\mathbb C} \{ x_{1},\hdots,x_{n} \}$ of
$\hat{\mathcal O}_{n}$ of convergent power series.
We denote by ${\mathfrak m}$ and $\hat{\mathfrak m}$ the maximal ideals of
${\mathcal O}_{n}$ and $\hat{\mathcal O}_{n}$ respectively.
\end{defi}
In our approach given a subgroup $G$ of $\mathrm{Diff} ({\mathbb C}^{n},0)$
we construct its pro-algebraic closure $\overline{G}^{(0)}$.
The latter group has the same derived length as $G$ and it is easier to handle
since we can apply the theory of linear algebraic groups.
Unfortunately $\overline{G}^{(0)}$ is not a subgroup of
$\mathrm{Diff} ({\mathbb C}^{n},0)$ in general. It is a subgroup of a formal
completion of $\mathrm{Diff} ({\mathbb C}^{n},0)$ whose elements are called
formal diffeomorphisms.
For completeness we introduce the construction of formal diffeomorphisms.

Given an element $\varphi \in \mathrm{Diff} ({\mathbb C}^{n},0)$ we consider its
action in the space of $k$-jets. More precisely we consider the
element $\varphi_{k} \in \mathrm{GL}({\mathfrak m}/{\mathfrak m}^{k+1})$ defined by
\begin{equation}
\label{eq:phik}
\begin{array}{ccc}
{\mathfrak m}/{\mathfrak m}^{k+1} &
\stackrel{\varphi_{k}}{\rightarrow} & {\mathfrak m}/{\mathfrak m}^{k+1} \\
g + {\mathfrak m}^{k+1} & \mapsto & g \circ \varphi + {\mathfrak m}^{k+1}
\end{array}
\end{equation}
where ${\mathfrak m}/{\mathfrak m}^{k+1}$ can be interpreted as a finite dimensional
complex vector space. In this point of view diffeomorphisms are interpreted
as operators acting on function spaces.
\begin{defi}
\label{def:dk}
We define $D_{k}=\{ \varphi_{k} : \varphi \in \mathrm{Diff} ({\mathbb C}^{n},0) \}$.
\end{defi}
The group $D_{k}$ can
be understood as an algebraic group of matrices by noticing that we have
\[ D_{k} = \{ \alpha \in \mathrm{GL}({\mathfrak m}/{\mathfrak m}^{k+1}) :
\alpha (g h) = \alpha (g) \alpha (h) \ \forall g,h \in  {\mathfrak m}/{\mathfrak m}^{k+1} \}  \]
and that fixed $g, h \in {\mathfrak m}/{\mathfrak m}^{k+1}$ the equation
$\alpha (g h) = \alpha (g) \alpha (h)$ is algebraic on the coefficients of $\alpha$.

It is clear that the natural projections $\pi_{k,l}: D_{k} \to D_{l}$ for $k \geq l$ define
a projective system and hence we can consider the projective limit $\varprojlim D_{k}$,
it is the so called group of formal diffeomorphisms.
Indeed given a sequence $(\phi_{k})_{k \geq 1}$ such that
$\phi_{k} \in D_{k}$ and $\pi_{k,l}(\phi_{k})= \phi_{l}$ for all $k \in {\mathbb N}$ and all $l \leq k$
the sequence $(\phi_{k}(x_{j} + {\mathfrak m}^{k+1}))_{k \geq 1}$ defines an element in
$\varprojlim {\mathfrak m}/{\mathfrak m}^{k+1}$
for every $1 \leq j \leq n$.
Since such a projective limit coincides with   $\hat{\mathfrak m}$,  we can consider
$(\phi_{k}(x_{j} + {\mathfrak m}^{k+1}))_{k \geq 1}$ as an element $g_{j}$ of $\hat{\mathfrak m}$.
Indeed the formal object
\[ \varphi (x_{1},\hdots,x_{n}):= (g_{1}(x_{1},\ldots,x_{n}) , \hdots , g_{n}(x_{1},\ldots,x_{n}) )
\in \hat{\mathfrak m}^{n} \]
behaves as a diffeomorphism since $\phi_{1} \in \mathrm{GL}({\mathfrak m}/{\mathfrak m}^{2})$ implies that
the first jet
\[ j^{1} \varphi = (j^{1} g_{1}, \hdots , j^{1} g_{n}) \]
belongs to $\mathrm{GL}(n, {\mathbb C})$
where given $f \in \hat{\mathfrak m}$ we denote by
$j^{k} f$ the unique polynomial of degree less or equal than $k$
such that $f - j^{k} f \in \hat{\mathfrak m}^{k+1}$.
Moreover
the $k$-jet $j^{k} \varphi = (j^{k} g_{1}, \hdots , j^{k} g_{n})$
belongs to $\mathrm{Diff} ({\mathbb C}^{n},0)$ for every $k \in {\mathbb N}$
by the inverse function theorem.
Thus we can interpret $\varphi$ as an element of
the formal completion of $\mathrm{Diff} ({\mathbb C}^{n},0)$ with respect to
the filtration ${\{ {\mathfrak m}^{k} \times \hdots \times {\mathfrak m}^{k} \}}_{k \in {\mathbb N}}$.
Moreover given
$f= \sum a_{j_{1},\ldots,j_{n}} x_{1}^{j_{1}} \ldots x_{n}^{j_{n}} \in
\hat{\mathfrak m}$ the composition
\[ f \circ \varphi : =
\sum_{j_{1}, \ldots, j_{n}} a_{j_{1},\ldots,j_{n}} g_{1}^{j_{1}} \ldots g_{n}^{j_{n}}
\]
is equal to the element
defined by $(\phi_{k}(f + {\mathfrak m}^{k+1}))_{k \geq 1}$ in $\hat{\mathfrak m}$.

We denote by $\diffh{}{n}$ the group of formal diffeomorphisms.
Resuming we can consider a formal diffeomorphism as either an element
$(\phi_{k})_{k \geq 1}$ in $\varprojlim D_{k}$ or as
an element $\varphi=(g_{1},\hdots,g_{n})$ of
$\hat{\mathfrak m} \times \hdots \times \hat{\mathfrak m}$ such that
$j^{1} \varphi \in \mathrm{GL}(n, {\mathbb C})$.
\begin{defi}
\label{def:u}
Let $\hat{\phi} \in \diffh{}{n}$. We say that $\hat{\phi}$ is unipotent if
$j^{1} \hat{\phi}$ is a unipotent linear isomorphism, i.e. if
$j^{1} \hat{\phi} - Id$ is nilpotent.
We denote by $\diffh{u}{n}$ the set of unipotent formal diffeomorphisms.
Given a subgroup $G$ of $\diffh{}{n}$ we denote by $G_{u}$ the set of
unipotent elements of $G$. We say that $G$
is unipotent if $G = G_{u}$.
\end{defi}
\begin{rem}
\label{rem:1forall}
An element $(\phi_{k})_{k \geq 1}$ of $\varprojlim D_{k}$ is unipotent if
and only if $\phi_{1}$ is unipotent. It would be more natural to define
$(\phi_{k})_{k \geq 1}$ as unipotent if $\phi_{k}$ is unipotent for any $k \in {\mathbb N}$.
It is a simple exercise that these definitions coincide.
Indeed the unipotent nature of $\phi_{1}$,
$\pi_{k,1}(\phi_{k})=\phi_{1}$ and the property
$\phi_{k}(gh)= \phi_{k}(g) \phi_{k}(h)$ for all $g,h \in {\mathfrak m}/{\mathfrak m}^{k+1}$
imply that $\phi_{k}$ is unipotent.
\end{rem}
\subsection{Formal vector fields}
We associate a Lie algebra ${\mathfrak g}$ to any subgroup $G$ of $\diffh{}{n}$.
Analogously as for diffeomorphisms ${\mathfrak g}$ is not in general
a Lie algebra of local vector fields
even if $G$ is a subgroup of $\mathrm{Diff} ({\mathbb C}^{n},0)$.

Let us denote by ${\mathfrak X} \cn{n}$ the Lie algebra of
(singular) local vector fields defined in the neighborhood of $0$ in ${\mathbb C}^{n}$.
An element $X$ of ${\mathfrak X} \cn{n}$
can be interpreted as a derivation of the ${\mathbb C}$-algebra ${\mathcal O}_{n}$
such that $X ({\mathfrak m}) \subset {\mathfrak m}$.
Naturally the Lie algebra $\hat{\mathfrak X} \cn{n}$ of formal vector fields in $n$ variables is
the set of derivations $X$ of $\hat{\mathcal O}_{n}$
such that $X (\hat{\mathfrak m}) \subset \hat{\mathfrak m}$.
A formal vector field $X \in \hat{\mathfrak X} \cn{n}$
is determined by $X(x_{1}), X(x_{2}), \hdots, X(x_{n})$. We obtain
\begin{equation}
\label{eq:fvf}
X = X(x_{1}) \frac{\partial}{\partial x_{1}} + \hdots + X(x_{n}) \frac{\partial}{\partial x_{n}} .
\end{equation}
\begin{defi}
\label{def:lk}
We define $L_{k}$ as the Lie algebra of derivations of the ${\mathbb C}$-algebra
${\mathfrak m}/ {\mathfrak m}^{k+1}$.
\end{defi}
Analogously as for formal diffeomorphisms the Lie algebra $\hat{\mathfrak X} \cn{n}$
can be understood as a projective limit $\varprojlim L_{k}$.
Given $X \in \hat{\mathfrak X} \cn{n}$ consider the element $(X_{k})_{k \geq 1}$ that
defines in $\varprojlim L_{k}$. Since $L_{k}$ is the Lie algebra of $D_{k}$
for any $k \in {\mathbb N}$, we obtain that $(\mathrm{exp}(X_{k}))_{k \geq 1}$ is a formal diffeomorphism
$\varphi$. Equivalently given $t \in {\mathbb C}$ the expression
\begin{equation}
\label{equ:expfor}
 {\rm exp} (t X) = \left({
\sum_{j=0}^{\infty}
\frac{t^{j}}{j!} X^{j}(x_{1}), \hdots, \sum_{j=0}^{\infty} \frac{t^{j}}{j!} X^{j}(x_{n})
}\right)
\end{equation}
defines the exponential of $t X$
where $X^{0}(f)=f$ and $X^{j+1}(f) = X(X^{j}(f))$ for all $f \in \hat{\mathcal O}_{n}$
and $j \geq 0$.
\begin{defi}
We say that a formal vector field $X \in \hat{\mathfrak X} \cn{n}$ is nilpotent if
$j^{1} X$ is a linear nilpotent vector field (cf. Equation (\ref{eq:fvf})).
We denote by $\hat{\mathfrak X}_{N} \cn{n}$ the set of nilpotent formal vector fields.
\end{defi}
\begin{rem}
\label{rem:1forallx}
Analogously as in Remark \ref{rem:1forall} an element $(X_{k})_{k \geq 1}$
of $\varprojlim L_{k}=\hat{\mathfrak X} \cn{n}$ is nilpotent if and only if $X_{k}$ is a nilpotent
linear map for any $k \in {\mathbb N}$.
\end{rem}
It is easier to deal with unipotent diffeomorphisms, instead of general ones,
since the formal properties of formal unipotent diffeomorphisms and formal
nilpotent vector fields are analogous.
\begin{pro}
\label{pro:ecmara}
(cf. \cite{Ecalle, MaRa:aen})
The image of $\hat{\mathfrak X}_{N} \cn{n}$ by the exponential map is equal to
$\diffh{u}{n}$ and
$\mathrm{exp}: \hat{\mathfrak X}_{N} \cn{n} \to \diffh{u}{n}$ is a bijection.
\end{pro}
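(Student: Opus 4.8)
The plan is to descend to the finite jet levels $D_{k}$ and $L_{k}$ of Definitions \ref{def:dk} and \ref{def:lk}, to establish the bijection there by elementary linear algebra, and then to pass to the projective limit. Recall that $\hat{\mathfrak X} \cn{n} = \varprojlim L_{k}$ and $\diffh{}{n} = \varprojlim D_{k}$, that each $D_{k}$ is a linear algebraic group with Lie algebra $L_{k}$, and that the exponential of Equation (\ref{equ:expfor}) is induced level by level by the ordinary exponential $\exp(X_{k}) = \sum_{j \geq 0} X_{k}^{j}/j!$ of the nilpotent endomorphism $X_{k}$ of the finite dimensional space $V_{k} := {\mathfrak m}/{\mathfrak m}^{k+1}$ (the sum being finite at each level, which is precisely why the formal series makes sense). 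Writing $(D_{k})_{u}$ for the set of unipotent elements of $D_{k}$ and $(L_{k})_{N}$ for the set of nilpotent derivations in $L_{k}$, Remarks \ref{rem:1forall} and \ref{rem:1forallx} give $\diffh{u}{n} = \varprojlim (D_{k})_{u}$ and $\hat{\mathfrak X}_{N} \cn{n} = \varprojlim (L_{k})_{N}$. Since the projections $\pi_{k,l}$ are homomorphisms of algebraic groups whose differentials are the natural maps $L_{k} \to L_{l}$, they carry unipotent elements to unipotent elements, nilpotent derivations to nilpotent derivations, and commute with $\exp$. Hence it suffices to prove that $\exp$ restricts to a bijection $(L_{k})_{N} \to (D_{k})_{u}$ for each $k$, as the limit of these compatible bijections is the asserted bijection.

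At a fixed level, inside $\mathrm{End}(V_{k})$ and $\mathrm{GL}(V_{k})$ the ordinary maps $\exp$ and $\log$ are mutually inverse bijections between the nilpotent endomorphisms and the unipotent automorphisms, both series being finite. It therefore remains only to check that these two maps match the derivations with the algebra automorphisms. If $X_{k} \in L_{k}$ is a derivation, the Leibniz rule yields $X_{k}^{m}(gh) = \sum_{i+j=m} \binom{m}{i} X_{k}^{i}(g) X_{k}^{j}(h)$, and a direct rearrangement gives $\exp(X_{k})(gh) = \exp(X_{k})(g)\, \exp(X_{k})(h)$; thus $\exp(X_{k})$ is an algebra automorphism and $\exp((L_{k})_{N}) \subseteq (D_{k})_{u}$.

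The reverse inclusion $\log((D_{k})_{u}) \subseteq (L_{k})_{N}$, namely that the logarithm of a unipotent algebra automorphism is a derivation, is the crux of the argument and the step where the characteristic zero nature of ${\mathbb C}$ is genuinely used. Given $\phi \in (D_{k})_{u}$ set $X_{k} = \log \phi$, a nilpotent endomorphism, and consider the family $t \mapsto \exp(t X_{k})$, which is polynomial in $t$ since $X_{k}$ is nilpotent. For every integer $m$ the value $\exp(m X_{k}) = \phi^{m}$ is an algebra automorphism, so the polynomial identity $\exp(t X_{k})(gh) - \exp(t X_{k})(g)\, \exp(t X_{k})(h) = 0$ holds at infinitely many values of $t$ and hence identically. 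Differentiating at $t=0$ and using $\exp(0 \cdot X_{k}) = \mathrm{Id}$ gives $X_{k}(gh) = X_{k}(g) h + g X_{k}(h)$, so $X_{k}$ is a derivation. This yields the mutually inverse bijections $(L_{k})_{N} \leftrightarrow (D_{k})_{u}$ at every level, and passing to the projective limit as in the first paragraph finishes the proof.
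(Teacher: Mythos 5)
Your proof is correct. Note, however, that the paper does not actually prove Proposition \ref{pro:ecmara}: it is stated with a citation to \'Ecalle and Martinet--Ramis and used as a black box. What you have written is therefore a self-contained replacement for that citation, and it is the natural one given the paper's own framework: you reduce to the truncations $D_{k}$ and $L_{k}$, where the paper's characterization of $D_{k}$ as the multiplicative elements of $\mathrm{GL}({\mathfrak m}/{\mathfrak m}^{k+1})$ and of $L_{k}$ as the derivations turns the statement into the finite-dimensional fact that $\exp$ and $\log$ exchange nilpotent derivations with unipotent algebra automorphisms; Remarks \ref{rem:1forall} and \ref{rem:1forallx} then let you pass to the projective limit of compatible bijections. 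The two directions are handled correctly: the Leibniz expansion of $X_{k}^{m}(gh)$ gives multiplicativity of $\exp(X_{k})$, and the converse is exactly the polynomial-in-$t$ trick that the paper itself uses in Lemma \ref{lem:pol} (a $V_{k}$-valued polynomial vanishing on ${\mathbb Z}$ vanishes identically, then differentiate at $t=0$), which is where characteristic zero enters, as you say. The only implicit dependence worth flagging is that you use the paper's asserted equality of $D_{k}$ with the full group of multiplicative invertible maps (not merely an inclusion) to conclude that $\exp(X_{k})$ lands in $D_{k}$; that equality is true and is stated in the paper, so nothing is missing. Compared with invoking the classical references, your argument buys a short, purely formal proof of exactly the statement needed, at the cost of not recovering the finer analytic content (e.g.\ generic divergence of the infinitesimal generator) that those references also address.
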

\begin{defi}
Given $\varphi \in \diffh{u}{n}$ we define its infinitesimal generator
$\log \varphi$ as the unique element of
$\hat{\mathfrak X}_{N} \cn{n}$ such that $\varphi = \mathrm{exp} (\log \varphi)$.
We define the $1$-parameter group $(\varphi^{t})_{t \in {\mathbb C}}$ by
$\varphi^{t} = \mathrm{exp} (t \log \varphi)$.
\end{defi}
Generically the infinitesimal generator of a local diffeomorphism
is a divergent vector field.

The properties of a unipotent diffeomorphism can be expressed in terms of
its infinitesimal generator. We provide some simple examples.
\begin{lem}
\label{lem:pol}
Let $\varphi \in \diffh{u}{n}$.
\begin{itemize}
\item Let $X \in \hat{\mathfrak X} \cn{n}$. If $\varphi_{*} X = X$ then $[\log \varphi, X]=0$.
\item Let $f \in \hat{\mathcal O}_{n}$. If $f \circ \varphi =f$ then $(\log \varphi)(f)=0$.
\end{itemize}
\end{lem}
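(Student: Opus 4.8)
The plan is to reduce both items to a single elementary fact about nilpotent operators on finite-dimensional spaces and then pass to the projective limit $\hat{\mathfrak X}\cn{n}=\varprojlim L_{k}$. Write $Y=\log\varphi\in\hat{\mathfrak X}_N\cn{n}$ and let $(Y_{k})_{k\geq 1}$, with $Y_{k}\in L_{k}$, be the element it defines in $\varprojlim L_{k}$; by Remark \ref{rem:1forallx} each $Y_{k}$ is a nilpotent linear endomorphism of the finite-dimensional space $\mathfrak m/\mathfrak m^{k+1}$, and $\varphi_{k}=\exp(Y_{k})$ by construction. The fact I will use twice is the following: \emph{if $N$ is a nilpotent endomorphism of a finite-dimensional vector space $V$ and $v\in V$ satisfies $(\exp(N)-\mathrm{Id})(v)=0$, then $N(v)=0$.} Indeed $\exp(N)-\mathrm{Id}=N\,B$ with $B=\sum_{i\geq 0}N^{i}/(i+1)!$; since $N$ is nilpotent, $B-\mathrm{Id}$ is nilpotent, so $B$ is invertible and commutes with $N$, whence $NBv=BNv=0$ forces $Nv=0$.

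For the second item I would recall from the construction of formal diffeomorphisms (Equation (\ref{equ:expfor}) and the ensuing description of $f\circ\varphi$) that the operator $g\mapsto g\circ\varphi$ on $\hat{\mathcal O}_n$ preserves the filtration and induces $\exp(Y_{k})=\varphi_{k}$ on each quotient $\mathfrak m/\mathfrak m^{k+1}$. Since $\varphi$ fixes the origin I may subtract $f(0)$ and assume $f\in\mathfrak m$. The hypothesis $f\circ\varphi=f$ then reads $(\exp(Y_{k})-\mathrm{Id})(f+\mathfrak m^{k+1})=0$ for every $k$, so the displayed fact yields $Y_{k}(f+\mathfrak m^{k+1})=0$, i.e. $Y(f)\in\mathfrak m^{k+1}$, for every $k$. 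Intersecting over $k$ gives $(\log\varphi)(f)=Y(f)=0$.

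For the first item I would express the push-forward as a conjugation at the level of operators on functions. Writing $\varphi^{*}$ for the operator $g\mapsto g\circ\varphi$, the vector field $\varphi_{*}X$ is the derivation $(\varphi^{*})^{-1}\circ X\circ\varphi^{*}$; since $\varphi^{*}=\exp(Y)$ this equals $\exp(-Y)\,X\,\exp(Y)=\exp(-\mathrm{ad}_{Y})(X)$, where $\mathrm{ad}_{Y}(Z)=[Y,Z]$ is the commutator of derivations, that is, the Lie bracket of vector fields. Passing to $L_{k}$, the operator $\mathrm{ad}_{Y_{k}}$ is nilpotent on $L_{k}$: as $Y_{k}$ is a nilpotent endomorphism of $\mathfrak m/\mathfrak m^{k+1}$, left and right multiplication by $Y_{k}$ are commuting nilpotent operators whose difference is $\mathrm{ad}_{Y_{k}}$, and $L_{k}$ is $\mathrm{ad}_{Y_{k}}$-invariant. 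The hypothesis $\varphi_{*}X=X$ becomes $(\exp(-\mathrm{ad}_{Y_{k}})-\mathrm{Id})(X_{k})=0$, and applying the displayed fact with $N=-\mathrm{ad}_{Y_{k}}$ and $V=L_{k}$ gives $[Y_{k},X_{k}]=0$ for all $k$, hence $[\log\varphi,X]=0$.

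The only genuine difficulty — the main obstacle — is that neither $Y=\log\varphi$ as an operator on $\hat{\mathcal O}_n$ nor $\mathrm{ad}_{Y}$ on $\hat{\mathfrak X}\cn{n}$ is nilpotent, so the kernel identity cannot be invoked directly in the infinite-dimensional setting. The point is precisely to descend to the finite-dimensional jet spaces $\mathfrak m/\mathfrak m^{k+1}$ and $L_{k}$, where genuine nilpotency is supplied by Remarks \ref{rem:1forall} and \ref{rem:1forallx}, to apply the identity there, and to recover the formal conclusion by letting $k\to\infty$. A secondary point to get right is the identification $\varphi^{*}=\exp(\log\varphi)$ and the resulting formula $\varphi_{*}X=\exp(-\mathrm{ad}_{\log\varphi})(X)$, although for the final conclusion even the sign of the exponent is irrelevant, since $\exp(\pm N)-\mathrm{Id}$ and $N$ share the same kernel.
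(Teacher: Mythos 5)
Your proof is correct, but it follows a genuinely different route from the paper's. The paper argues through the one-parameter group: since $\log\varphi$ is nilpotent at each jet level, $(\varphi^{t})^{*}X$ is polynomial in $t$; it equals $X$ for $t\in{\mathbb Z}$, hence for all $t\in{\mathbb C}$ because a polynomial vanishing on ${\mathbb Z}$ is identically zero, and then $[\log\varphi,X]=\lim_{t\to 0}\bigl((\varphi^{t})^{*}X-X\bigr)/t=0$; the second item is declared analogous. You instead descend immediately to the finite-dimensional spaces ${\mathfrak m}/{\mathfrak m}^{k+1}$ and $L_{k}$ and invoke the linear-algebra identity $\ker(\exp N-\mathrm{Id})=\ker N$ for nilpotent $N$ (via the factorization $\exp N-\mathrm{Id}=NB$ with $B$ unipotent, hence invertible and commuting with $N$), applied to $N=Y_{k}$ acting on ${\mathfrak m}/{\mathfrak m}^{k+1}$ for the second item and to $N=-\mathrm{ad}_{Y_{k}}$ acting on $L_{k}$ for the first; the nilpotency of $\mathrm{ad}_{Y_{k}}$ as the difference of commuting nilpotent left and right multiplications is correctly justified, and the passage to the limit over $k$ is unproblematic since all the operators involved preserve the ${\mathfrak m}$-adic filtration. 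Both arguments rest on the same source of rigidity, namely Remarks \ref{rem:1forall} and \ref{rem:1forallx}, but they exploit it differently: your kernel identity handles the two items in a visibly unified and entirely self-contained way, with no interpolation step, while the paper's polynomial-in-$t$ argument is the template it reuses repeatedly elsewhere (e.g.\ to extend invariance properties from $\varphi$ to the whole one-parameter group $\{\varphi^{t}\}$, and in the jet-level computations of sections \ref{sec:2} and \ref{sec:35}), so the authors get this lemma essentially for free from machinery they need anyway.
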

\begin{proof}
Suppose $\varphi_{*} X = X$.
It is clear that $(\varphi^{t})^{*} X = X$ for any $t \in {\mathbb Z}$.
Since $\log \varphi$ is nilpotent the expression
$(\varphi^{t})^{*} X = {\rm exp}(t \log \varphi)^{*} X$ is polynomial
in $t$. We deduce $(\varphi^{t})^{*} X = X$ for any
$t \in {\mathbb C}$ since a polynomial vanishing in ${\mathbb Z}$ is identically $0$.
We obtain
\[   [\log \varphi, X] = \lim_{t \to 0} \frac{(\varphi^{t})^{*} X  - X}{t} = 0 . \]
The proof of the second property is analogous.
\end{proof}
\subsection{Pro-algebraic groups of local diffeomorphisms}
\label{sec:forgroup}
In this section we define pro-algebraic subgroups of
$\mathrm{Diff} ({\mathbb C}^{n},0)$ (or $\diffh{}{n}$).
Moreover given a subgroup $G$ of $\diffh{}{n}$ we define the minimal
pro-algebraic subgroup containing $G$, the so called pro-algebraic
closure.
Along the way we explain how to adapt some concepts of Lie theory to
pro-algebraic groups in our
infinite dimensional setting. For instance we define the Lie algebra
of a subgroup of $\diffh{}{n}$, the connected component of
the identity of a pro-algebraic subgroup of $\diffh{}{n}$...
\begin{defi}
Let us define the Krull topology in $\diffh{}{n}$.
Let $k \in {\mathbb N}$ and $\varphi \in \diffh{}{n}$;
we denote
\[ U_{k, \varphi} = \{ \eta \in \diffh{}{n}: j^{k} \eta = j^{k} \varphi \}. \]
The family
$\{ U_{k, \varphi} \}$ where $k$ varies in ${\mathbb N}$ and $\varphi$ varies in
$\diffh{}{n}$
is a base of neighborhoods for the Krull topology in $\diffh{}{n}$. A sequence
$(\varphi_{j})_{j \geq 1}$ converges to $\varphi$ if given any $k \in {\mathbb N}$
there exists $j_{0} \in {\mathbb N}$ such that $j^{k} \varphi_{j} = j^{k} \varphi$
for any $j \geq j_{0}$. The definition  of the Krull topology for formal vector fields
is analogous.
\end{defi}
Our next goal is defining the pro-algebraic closure of a subgroup of $\diffh{}{n}$.
It is the analogue of the algebraic closure of a linear group in the context of
formal diffeomorphisms.
\begin{defi}
\label{def:gk}
Let $G$ be a subgroup of $\diffh{}{n}$.
We define $G_{k}$ as the smallest algebraic subgroup
of $D_{k}$ containing $\{ \varphi_{k} : \varphi \in G \}$ (cf. Equation (\ref{eq:phik})).
\end{defi}
\begin{defi}
Let $G$ be a subgroup of $\diffh{}{n}$.
We define
$\overline{G}^{(0)}$ as $\varprojlim_{k \in {\mathbb N}} G_{k}$, more precisely
$\overline{G}^{(0)}$ is the subgroup of $\diffh{}{n}$ defined by
\[ \overline{G}^{(0)} = \{ \varphi  \in \diffh{}{n} : \varphi_{k} \in G_{k} \ \forall k \in {\mathbb N} \} . \]
We say that $\overline{G}^{(0)}$ is the pro-algebraic closure of $G$.
We say that $G$ is {\it pro-algebraic} if $G = \overline{G}^{(0)}$.
\end{defi}
\begin{rem}
\label{rem:pacck}
The pro-algebraic closure
of a subgroup
of $\diffh{}{n}$
is closed in the Krull topology by construction.
\end{rem}
The next two results are technical lemmas that we use to
characterize the pro-algebraic subgroups of $\diffh{}{n}$.
\begin{lem}
\label{lem:cons}
Let $H_{k}$ be an algebraic subgroup of $D_{k}$ for $k \in {\mathbb N}$.
Suppose that $\pi_{l,k}(H_{l}) = H_{k}$ for all $l \geq k \geq 1$.
Then $\varprojlim_{k \in {\mathbb N}} H_{k}$ is a pro-algebraic subgroup of
$\diffh{}{n}$. Moreover the natural map $\varprojlim H_{j}  \to H_{k}$
is surjective for any $k \in {\mathbb N}$.
\end{lem}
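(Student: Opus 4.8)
The plan is to establish the surjectivity statement first, since it is the engine that drives the pro-algebraicity. Write $H := \varprojlim_{k} H_k$ and let $\rho_k : H \to H_k$ be the natural projection. That $H$ is a subgroup of $\diffh{}{n}$ is immediate: each $H_k$ is a subgroup of $D_k$ and the $\pi_{l,k}$ are group homomorphisms, so a compatible sequence of elements of the $H_k$ is in particular a compatible sequence of elements of the $D_k$, i.e. an element of $\varprojlim D_k = \diffh{}{n}$, and the set of such sequences is closed under the group operations. So the substance lies in the two claims about $\rho_k$ and $\overline{H}^{(0)}$.

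For the surjectivity of $\rho_k$ I would run the standard argument for an inverse system over ${\mathbb N}$ with surjective transition maps. Fix $h_k \in H_k$. Using the hypothesis $\pi_{k+1,k}(H_{k+1}) = H_k$, choose $h_{k+1} \in H_{k+1}$ with $\pi_{k+1,k}(h_{k+1}) = h_k$; iterating, I build $h_{k+1}, h_{k+2}, \dots$ with $h_{j+1} \in H_{j+1}$ and $\pi_{j+1,j}(h_{j+1}) = h_j$ for every $j \geq k$. For the indices below $k$ I simply set $h_j := \pi_{k,j}(h_k)$, which lies in $H_j$ because $\pi_{k,j}(H_k) = H_j$. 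By construction $(h_j)_{j \geq 1}$ is compatible, hence defines an element of $\varprojlim D_j = \diffh{}{n}$ lying in $H$ with $\rho_k((h_j)_j) = h_k$. The hypothesis is used exactly at the upward liftings $\pi_{j+1,j}(H_{j+1}) = H_j$; no algebraic-group structure is needed for the liftings, only set-theoretic surjectivity, and since the system is indexed by ${\mathbb N}$ no Mittag--Leffler subtlety arises.

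Finally I deduce that $H$ is pro-algebraic. To apply Definition \ref{def:gk} to $G = H$, write $(H)_k$ for the smallest algebraic subgroup of $D_k$ containing $\{ \varphi_k : \varphi \in H \}$ (this is the object denoted $G_k$ there for $G=H$; I avoid reusing the symbol $H_k$, which is reserved for the given groups). By the surjectivity just proved, $\{ \varphi_k : \varphi \in H \} = \rho_k(H) = H_k$ for every $k$. Since $H_k$ is already an algebraic subgroup of $D_k$, the smallest algebraic subgroup containing it is $H_k$ itself, so $(H)_k = H_k$. Therefore
\[ \overline{H}^{(0)} = \{ \varphi \in \diffh{}{n} : \varphi_k \in (H)_k \ \forall k \} = \{ \varphi \in \diffh{}{n} : \varphi_k \in H_k \ \forall k \} = \varprojlim_k H_k = H, \]
so $H = \overline{H}^{(0)}$ is pro-algebraic.

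The only genuine obstacle is the inductive lifting of the second paragraph; everything else is bookkeeping against the definitions. In writing it up I would be careful about two points: that the compatible sequence produced really is a formal diffeomorphism (it is, because the $H_j$ sit inside the $D_j$ and $\varprojlim D_j = \diffh{}{n}$), and that the equality $(H)_k = H_k$ uses both inclusions --- one from the definition of $H$, giving $\rho_k(H) \subseteq H_k$ and hence $(H)_k \subseteq H_k$, and the surjectivity, giving $\rho_k(H) = H_k$ and hence $(H)_k \supseteq H_k$.
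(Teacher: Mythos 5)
Your proposal is correct and follows essentially the same route as the paper: the paper likewise reduces everything to the surjectivity of the projections $\varprojlim H_j \to H_k$ (citing the standard fact for inverse systems over ${\mathbb N}$ with surjective transition maps, which you prove explicitly by the lifting construction) and then concludes $\{\varphi_k : \varphi \in \varprojlim H_j\} = H_k$ is already algebraic, so the pro-algebraic closure of $\varprojlim H_k$ is itself. Your write-up is just a more detailed version of the same argument.
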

\begin{proof}
The inverse limit $\varprojlim H_{k}$ is contained in
$\diffh{}{n} = \varprojlim D_{k}$.

An inverse system $(S_{k})_{k \in {\mathbb N}}$ of non-empty sets and surjective maps
indexed by the natural numbers satisfies that the natural projections
$\varprojlim_{j \in {\mathbb N}} S_{j} \to S_{k}$ are surjective for any $k \in {\mathbb N}$.
Since $(\pi_{l,k})_{|H_{l}}: H_{l}  \to H_{k}$ is surjective for $l \geq k \geq 1$,
the natural map $\varprojlim_{j \in {\mathbb N}} H_{j} \to H_{k}$ is surjective
for any $k \in {\mathbb N}$. In particular we obtain
$\{ \varphi_{k} : \varphi \in \varprojlim H_{j} \} = H_{k}$ for $k \in {\mathbb N}$
and then $\overline{(\varprojlim H_{k})}^{(0)} = \varprojlim H_{k}$.
\end{proof}
\begin{lem}
\label{lem:surpro}
Let $G$ be a subgroup of $\diffh{}{n}$. Then we obtain
$\pi_{l,k}(G_{l})=G_{k}$ for all $l \geq k \geq 1$.
Moreover the natural map $\varprojlim G_{j} \to G_{k}$ is surjective for
any $k \in {\mathbb N}$.
\end{lem}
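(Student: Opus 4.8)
The plan is to first establish the equality $\pi_{l,k}(G_l)=G_k$ of algebraic subgroups of $D_k$, and then to read off the surjectivity statement from Lemma \ref{lem:cons}. Throughout I would write $H_m = \{ \varphi_m : \varphi \in G \}$ for the image of $G$ in $D_m$; since $\varphi \mapsto \varphi_m$ is a group homomorphism, $H_m$ is a subgroup of $D_m$ and, by Definition \ref{def:gk}, $G_m$ is the smallest algebraic subgroup of $D_m$ containing $H_m$. The transition maps are truncations of jets, so $\pi_{l,k}(\varphi_l)=\varphi_k$ and hence $\pi_{l,k}(H_l)=H_k$ for all $l \geq k \geq 1$.

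To prove $\pi_{l,k}(G_l)=G_k$ I would split into two inclusions. For $G_k \subseteq \pi_{l,k}(G_l)$, first I would invoke the standard theorem of the theory of linear algebraic groups that the image of a morphism of algebraic groups is again a (closed) algebraic subgroup. Applied to the restriction of $\pi_{l,k}$ to $G_l$, this makes $\pi_{l,k}(G_l)$ an algebraic subgroup of $D_k$; since $H_l \subseteq G_l$, it contains $\pi_{l,k}(H_l)=H_k$, so by minimality of $G_k$ we obtain $G_k \subseteq \pi_{l,k}(G_l)$. For the reverse inclusion I would pass to the preimage $\pi_{l,k}^{-1}(G_k)$, which is an algebraic subgroup of $D_l$ because it is the preimage of a closed algebraic subgroup under a morphism of algebraic groups. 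From $\pi_{l,k}(H_l)=H_k \subseteq G_k$ we get $H_l \subseteq \pi_{l,k}^{-1}(G_k)$, and minimality of $G_l$ forces $G_l \subseteq \pi_{l,k}^{-1}(G_k)$, that is $\pi_{l,k}(G_l) \subseteq G_k$. Combining the two inclusions gives the desired equality.

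With $\pi_{l,k}(G_l)=G_k$ in hand, the family $(G_k)_{k \in {\mathbb N}}$ is an inverse system of algebraic subgroups of the $D_k$ whose transition maps are surjective, so Lemma \ref{lem:cons} applies verbatim with $H_k = G_k$ and yields that the natural map $\varprojlim G_j \to G_k$ is surjective for every $k \in {\mathbb N}$. (The combinatorial heart of that surjectivity is the elementary fact that an inverse system of non-empty sets with surjective transition maps indexed by ${\mathbb N}$ has surjective projections from its limit, which is already used inside the proof of Lemma \ref{lem:cons}.)

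The only genuinely nontrivial ingredient, and the step I expect to be the main obstacle, is the closedness of the image $\pi_{l,k}(G_l)$, i.e. the theorem that a homomorphic image of an algebraic group is again an algebraic group; this is what upgrades the obvious set-theoretic inclusion $\pi_{l,k}(H_l) \subseteq \pi_{l,k}(G_l)$ into a statement about the algebraic closure $G_k$. Everything else is a formal manipulation of the defining minimality of $G_k$ together with the already-established Lemma \ref{lem:cons}.
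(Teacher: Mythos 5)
Your proposal is correct and takes essentially the same approach as the paper: the paper condenses your two-inclusion argument into a single citation of Borel (2.1(f), p.~57), which states precisely that the image under a morphism of algebraic groups of the smallest algebraic group containing a set is the smallest algebraic group containing the image of that set, and then, like you, concludes the surjectivity of $\varprojlim G_{j} \to G_{k}$ from Lemma \ref{lem:cons}. You correctly identify the closedness of the image of a morphism of algebraic groups as the only nontrivial ingredient.
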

\begin{proof}
The map $\pi_{l,k}: D_{l}  \to D_{k}$ is a surjective morphism of algebraic groups
for $l \geq k$.
Moreover the image by $\pi_{l,k}$ of the smallest algebraic group of
$\mathrm{GL}({\mathfrak m}/{\mathfrak m}^{l+1})$ containing $\{ \varphi_{l} : \varphi \in G \}$
is the smallest algebraic group of
$\mathrm{GL}({\mathfrak m}/{\mathfrak m}^{k+1})$ containing
$\{ \varphi_{k} : \varphi \in G \} = \pi_{l,k}(\{ \varphi_{l} : \varphi \in G \})$
(cf.  \cite{Borel}[2.1 (f), p. 57]).
Hence we have $\pi_{l,k} (G_{l})=G_{k}$ if $l \geq k$.
The last statement is a consequence of Lemma \ref{lem:cons}.
\end{proof}
We provide two characterizations of pro-algebraic groups in next proposition.
\begin{pro}
\label{pro:char}
Let $G$ be a subgroup of $\diffh{}{n}$. Then the following conditions are equivalent:
\begin{enumerate}
\item $G$ is pro-algebraic.
\item $\{ \varphi_{k} : \varphi \in G \}$ is an algebraic matrix group for any $k \in {\mathbb N}$
and $G$ is closed in the Krull topology.
\item $G$ is of the form $\varprojlim_{k \in {\mathbb N}} H_{k}$ where $H_{k}$ is an algebraic subgroup of $D_{k}$
and $\pi_{l,k} (H_{l}) \subset H_{k}$ for all $l \geq k \geq  1$.
\end{enumerate}
\end{pro}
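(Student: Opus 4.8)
The plan is to prove the equivalence of the three conditions by establishing a cycle of implications $(1) \Rightarrow (2) \Rightarrow (3) \Rightarrow (1)$. The implication $(1) \Rightarrow (2)$ is the most direct: if $G$ is pro-algebraic then $G = \overline{G}^{(0)} = \varprojlim_{k} G_{k}$. By Lemma \ref{lem:surpro} the natural map $\varprojlim G_{j} \to G_{k}$ is surjective, so $\{ \varphi_{k} : \varphi \in G \} = G_{k}$, which is an algebraic subgroup of $D_{k}$ by Definition \ref{def:gk}. That $G$ is closed in the Krull topology is precisely the content of Remark \ref{rem:pacck}, since $G$ coincides with its own pro-algebraic closure.

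\smallskip

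For $(2) \Rightarrow (3)$ I would set $H_{k} = \{ \varphi_{k} : \varphi \in G \}$. By hypothesis each $H_{k}$ is an algebraic subgroup of $D_{k}$, and the projection relation $\pi_{l,k}(H_{l}) = H_{k}$ (hence in particular $\pi_{l,k}(H_{l}) \subset H_{k}$) holds automatically because $\pi_{l,k}(\varphi_{l}) = \varphi_{k}$ for every $\varphi \in G$. It then remains to identify $G$ with $\varprojlim_{k} H_{k}$. The inclusion $G \subseteq \varprojlim H_{k}$ is clear. For the reverse inclusion, an element $\varphi$ of $\varprojlim H_{k}$ satisfies $\varphi_{k} \in H_{k}$ for all $k$, meaning each $k$-jet of $\varphi$ agrees with that of some element of $G$; hence $\varphi$ lies in the Krull closure of $G$, which equals $G$ by hypothesis. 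This gives $G = \varprojlim_{k} H_{k}$ and establishes $(3)$.

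\smallskip

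For $(3) \Rightarrow (1)$, suppose $G = \varprojlim_{k} H_{k}$ with each $H_{k}$ algebraic in $D_{k}$ and $\pi_{l,k}(H_{l}) \subset H_{k}$. The goal is to show $G = \overline{G}^{(0)}$, i.e. $G$ is its own pro-algebraic closure. The natural strategy is to pass from the $H_{k}$, which need not form a surjective inverse system, to the groups $\tilde{H}_{k} := \{ \varphi_{k} : \varphi \in G \}$ that do. One first checks that $\tilde{H}_{k} \subseteq H_{k}$ is an algebraic subgroup, then invokes Lemma \ref{lem:surpro} to get $\pi_{l,k}(\tilde{H}_{l}) = \tilde{H}_{k}$, and applies Lemma \ref{lem:cons} to conclude that $\varprojlim \tilde{H}_{k}$ is pro-algebraic with $\overline{(\varprojlim \tilde{H}_{k})}^{(0)} = \varprojlim \tilde{H}_{k}$. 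The crux is then to show $G = \varprojlim \tilde{H}_{k}$, which again reduces to identifying $G$ with the inverse limit of its own jet-images.

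\smallskip

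The main obstacle, and the point requiring the most care, lies in $(3) \Rightarrow (1)$: the subtlety is that the given system $(H_{k})$ in condition $(3)$ is only assumed to satisfy the inclusion $\pi_{l,k}(H_{l}) \subset H_{k}$, not the equality, so one cannot directly apply Lemma \ref{lem:cons} to the $H_{k}$ themselves. The real work is verifying that replacing $H_{k}$ by the surjective system $\tilde{H}_{k}$ does not change the inverse limit, equivalently that $\varprojlim H_{k} = \varprojlim \tilde{H}_{k}$. This is where the surjectivity of $\varprojlim \tilde{H}_{j} \to \tilde{H}_{k}$ from Lemma \ref{lem:surpro} is essential: it guarantees that every element of $\tilde{H}_{k}$ (and indeed, via the inclusions, every compatible sequence in $\varprojlim H_{k}$) lifts to an actual element of $G$, closing the gap between the abstractly prescribed system $(H_k)$ and the jets genuinely realized by $G$.
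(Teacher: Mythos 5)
Your implications $(1) \Rightarrow (2)$ and $(2) \Rightarrow (3)$ are correct and essentially identical to the paper's (the paper routes the second one as $(2) \Rightarrow (1) \Rightarrow (3)$, but the content — Krull-closedness forcing $\varprojlim \{\varphi_k : \varphi \in G\} \subset G$ — is the same).

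The problem is in $(3) \Rightarrow (1)$, where you have a genuine gap and, moreover, have located the difficulty in the wrong place. You set $\tilde{H}_k = \{\varphi_k : \varphi \in G\}$ for $G = \varprojlim H_k$ and assert that ``one first checks that $\tilde{H}_k \subseteq H_k$ is an algebraic subgroup.'' That is precisely the hard step, and nothing in your write-up justifies it: $\tilde{H}_k$ is the image of the projection $\varprojlim H_j \to H_k$, and for a non-surjective inverse system there is no a priori reason this image should be Zariski-closed (or even nonempty, for general inverse systems of groups). By contrast, the two things you flag as the ``crux'' are actually immediate: $\pi_{l,k}(\tilde{H}_l) = \tilde{H}_k$ holds because $\pi_{l,k}(\varphi_l) = \varphi_k$ for every $\varphi \in G$ (Lemma \ref{lem:surpro} is not the relevant tool here — it concerns the algebraic hulls $G_k$, not the raw jet sets), and $G = \varprojlim \tilde{H}_k$ follows from the two trivial inclusions $G \subseteq \varprojlim \tilde{H}_k \subseteq \varprojlim H_k = G$.

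The missing idea is the paper's noetherian stabilization argument, which is where the algebraicity hypothesis in $(3)$ is actually used. One sets $H_{l,k} = \pi_{l,k}(H_l)$; each is algebraic because the image of a morphism of algebraic groups is a closed subgroup. For fixed $k$ the chain $(H_{l,k})_{l \geq k}$ is decreasing, hence stabilizes by the descending chain condition for algebraic subgroups (noetherianity of the coordinate ring), to some $K_k = \cap_{l \geq k} H_{l,k}$. One then checks $\pi_{l,k}(K_l) = K_k$, so $(K_k)$ is a \emph{surjective} system of algebraic groups with $\varprojlim K_k = \varprojlim H_k$, and Lemma \ref{lem:cons} applies. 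Only after this does one learn that $\tilde{H}_k = K_k$ is algebraic — so your step 2 is not an independent verification but a consequence of the argument you omitted. Without stabilization the implication genuinely fails at the level of your outline.
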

\begin{proof}
Let us prove $(1) \implies (2)$.  Suppose $G = \overline{G}^{(0)}$.
We obtain
$\{ \varphi_{k} : \varphi \in G \} = G_{k}$ by Lemma \ref{lem:surpro}. Moreover since $ \overline{G}^{(0)}$
is closed in the Krull topology by construction, $G$ is closed in the Krull topology.

Let us show $(2) \implies (1)$. The group $G_{k}$ is equal to
$\{ \varphi_{k} : \varphi \in G \}$ by hypothesis for any $k \in {\mathbb N}$ .
We claim $\overline{G}^{(0)} \subset G$. Indeed given $\varphi \in \overline{G}^{(0)}$ and $k \in {\mathbb N}$
there exists $\eta(k) \in G$ such that $\varphi_{k} =(\eta(k))_{k}$ for any $k \in {\mathbb N}$
since $\overline{G}^{(0)} = \varprojlim \{ \varphi_{k} : \varphi \in G \}$.
In particular $\varphi = \lim_{k \to \infty} \eta (k)$ where the limit is considered in the Krull topology.
Since $G$ is closed in the Krull topology, we obtain $\varphi \in G$.
The inclusion $\overline{G}^{(0)} \subset G$ implies $G=\overline{G}^{(0)}$ and hence
$G$ is pro-algebraic.
Moreover we obtain $G=\overline{G}^{(0)} = \varprojlim G_{k}$
and then $G$ is the form in item $(3)$ by Lemma \ref{lem:surpro}.
We just proved $(2) \implies (3)$.

Finally let us prove $(3) \implies (1)$.
We define $H_{l,k} = \pi_{l,k}(H_{l})$ for $l \geq k \geq 1$.
The group $H_{l,k}$ is algebraic since it is the image of an algebraic group
by a morphism of algebraic groups.
Since $\pi_{l',k} = \pi_{l,k} \circ \pi_{l',l}$ for $l' \geq l \geq k \geq 1$,
the sequence $(H_{l,k})_{l \geq k}$ is decreasing for any $k \in {\mathbb N}$.
The sequence stabilizes by the noetherianity of the ring of regular functions of
an affine algebraic variety. We denote $K_{k}= \cap_{l \geq k} H_{l,k}$.
Given $l \geq k \geq 1$ we consider $l' \geq l$ such that
$K_{l}= H_{l',l}$ and $K_{k}= H_{l',k}$.
Since $\pi_{l',k} = \pi_{l,k} \circ \pi_{l',l}$, we deduce $\pi_{l,k}(K_{l})=K_{k}$
for all $l \geq k \geq 1$.
The construction implies $\varprojlim K_{k} = \varprojlim H_{k}$.
Thus $\varprojlim H_{k}$ is pro-algebraic by Lemma \ref{lem:cons}.
\end{proof}
\begin{rem}
Proposition \ref{pro:char} is very useful to show that certain groups are pro-algebraic.
For example consider a family ${\{G_{j}\}}_{j \in J}$ of pro-algebraic subgroups of $\diffh{}{n}$.
Let us see that $\cap_{j \in J} G_{j}$ is pro-algebraic.
We have
\[ \pi_{l,k} (\cap_{j \in J} (G_{j})_{l}) \subset \cap_{j \in J} (G_{j})_{k} \ \forall l \geq k \geq 1
\ \mathrm{and} \ \cap_{j \in J} G_{j} = \varprojlim \cap_{j \in J} (G_{j})_{k}. \]
Since the intersection of algebraic matrix groups is an algebraic group, the group
$\cap_{j \in J} G_{j}$ is pro-algebraic by item (3) of Proposition \ref{pro:char}.
\end{rem}
\begin{rem}
\label{rem:invpro}
Invariance properties typically define pro-algebraic groups.
Item $(3)$ of Proposition \ref{pro:char} provides an easy way
of proving such property.

For instance consider $f \in \hat{\mathcal O}_{n}$ and
$G = \{ \varphi \in \diffh{}{n} : f \circ \varphi \equiv f \}$.
We define
\[ H_{k}= \{ A \in D_{k} : A (f +{\mathfrak m}^{k+1}) = f +{\mathfrak m}^{k+1} \}; \]
it is clear that $H_{k}$ is an algebraic subgroup of $D_{k}$ and $\pi_{l,k}(H_{l}) \subset H_{k}$
for all $l \geq k \geq 1$.
Moreover since $G= \varprojlim H_{k}$, $G$ is a pro-algebraic group by Proposition \ref{pro:char}.
The power of item $(3)$ of Proposition \ref{pro:char}  is that
in order to show that $G$ is pro-algebraic we do not need to
find $\{ \varphi_{k} : \varphi \in G \}$ explicitly; in particular we could have
$\{ \varphi_{k} : \varphi \in G \} \subsetneq H_{k}$. Moreover, it allows us to exploit that
a pro-algebraic group can be expressed in several ways as an inverse limit of algebraic
groups.
\end{rem}
The group $\overline{G}^{(0)}$
is a projective limit
of algebraic groups and
closed in the Krull topology by definition.
%
Since $G_{k}$ is an algebraic group of matrices and in particular a Lie group,
we can define the conected component $G_{k,0}$ of the identity in $G_{k}$.
We also consider the set $G_{k,u}$ of unipotent elements
of $G_{k}$.
\begin{defi}
\label{def:gu}
Let $G$ be a subgroup of $\diffh{}{n}$. We define
\[ \overline{G}_{0}^{(0)} = \{ \varphi  \in \diffh{}{n} : \varphi_{k} \in G_{k,0} \ \forall k \in {\mathbb N} \} \]
and
\[ \overline{G}_{u}^{(0)} = \{ \varphi  \in \diffh{}{n} : \varphi_{k} \in G_{k,u} \ \forall k \in {\mathbb N} \}. \]
\end{defi}
Let us show that membership in $\overline{G}_{0}^{(0)}$ can be checked out on the linear part
and that $\overline{G}_{0}^{(0)}$ is pro-algebraic.
\begin{pro}
\label{pro:ccipa}
Let $G$ be a  subgroup of $\diffh{}{n}$. Then we have
$\overline{G}_{0}^{(0)} = \{ \varphi  \in \overline{G}^{(0)} : \varphi_{1} \in G_{1,0} \}$.
Moreover $\overline{G}_{0}^{(0)}$ is pro-algebraic.
\end{pro}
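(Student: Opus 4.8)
The plan is to prove the two assertions separately, the second being essentially a direct application of Proposition \ref{pro:char}.

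First I would establish the equality $\overline{G}_{0}^{(0)} = \{ \varphi \in \overline{G}^{(0)} : \varphi_{1} \in G_{1,0}\}$. The inclusion $\subseteq$ is immediate from the definitions, since $G_{k,0} \subseteq G_{k}$ for every $k$. For the reverse inclusion the crucial object is the restriction $\rho_{k} := (\pi_{k,1})_{|G_{k}} : G_{k} \to G_{1}$, which is a surjective morphism of algebraic groups by Lemma \ref{lem:surpro}. I would show that its kernel is connected: an element of $\ker \pi_{k,1}$ corresponds to a formal diffeomorphism whose linear part is the identity, hence is unipotent by Remark \ref{rem:1forall}; therefore $\ker \rho_{k} = G_{k} \cap \ker \pi_{k,1}$ is a unipotent algebraic group and, being defined over ${\mathbb C}$, it is connected.

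With this at hand the equality follows from a standard fact about algebraic groups: if $f : A \to B$ is a surjective morphism of algebraic groups with connected kernel, then $f^{-1}(B_{0}) = A_{0}$, where $A_{0}$ and $B_{0}$ denote the identity components. Indeed $f^{-1}(B_{0})$ is a subgroup containing $A_{0}$ (as $f(A_{0}) \subseteq B_{0}$) and it sits in an extension $1 \to \ker f \to f^{-1}(B_{0}) \to B_{0} \to 1$ with connected ends, so it is itself connected and hence equals $A_{0}$. Applying this to $\rho_{k}$ yields $\rho_{k}^{-1}(G_{1,0}) = G_{k,0}$, which says precisely that $\varphi_{k} \in G_{k,0}$ if and only if $\varphi_{k} \in G_{k}$ and $\varphi_{1} = \pi_{k,1}(\varphi_{k}) \in G_{1,0}$. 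Ranging over all $k \in {\mathbb N}$ gives the claimed description of $\overline{G}_{0}^{(0)}$.

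For the pro-algebraicity I would invoke item $(3)$ of Proposition \ref{pro:char} with the choice $H_{k} = G_{k,0}$. Each $G_{k,0}$ is an algebraic subgroup of $D_{k}$, being the identity component of the algebraic group $G_{k}$, and since $\pi_{l,k}$ is a morphism of algebraic groups it carries the connected subgroup $G_{l,0}$ containing the identity into the identity component, i.e. $\pi_{l,k}(G_{l,0}) \subseteq G_{k,0}$ for all $l \geq k \geq 1$. As $\overline{G}_{0}^{(0)} = \varprojlim_{k} G_{k,0}$ by Definition \ref{def:gu}, Proposition \ref{pro:char} yields that $\overline{G}_{0}^{(0)}$ is pro-algebraic. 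The only genuinely substantive step is the connectedness of $\ker \rho_{k}$, which I expect to be the main point; it reduces quickly to the unipotence supplied by Remark \ref{rem:1forall} together with the fact that unipotent groups in characteristic zero are connected. The remaining arguments are formal manipulations of identity components and inverse limits.
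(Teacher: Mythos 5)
Your proof is correct, but it takes a more self-contained route than the paper. For the identity $\overline{G}_{0}^{(0)} = \{ \varphi \in \overline{G}^{(0)} : \varphi_{1} \in G_{1,0} \}$ the paper simply cites Lemma 3 of \cite{JR:arxivdl}, which supplies both $\pi_{l,k}^{-1}(G_{k,0}) = G_{l,0}$ and the surjectivity $\pi_{l,k}(G_{l,0}) = G_{k,0}$; you instead reprove the relevant preimage identity from scratch, and your key observation --- that $\ker\bigl((\pi_{k,1})_{|G_{k}}\bigr)$ consists of unipotent elements by the argument of Remark \ref{rem:1forall}, hence is a connected algebraic group in characteristic zero, so that preimages of identity components are identity components --- is exactly the content that the citation hides. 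For the pro-algebraicity the two arguments also diverge slightly: the paper feeds the surjectivity $\pi_{l,k}(G_{l,0}) = G_{k,0}$ into Lemma \ref{lem:cons}, whereas you invoke item (3) of Proposition \ref{pro:char}, which only needs the (essentially trivial) inclusion $\pi_{l,k}(G_{l,0}) \subseteq G_{k,0}$ coming from connectedness of images. Your version is therefore cheaper at this step and independent of the external reference; the paper's version, at the price of the citation, additionally records the surjectivity of the transition maps on identity components, which is a slightly stronger piece of information. Both arguments are complete and correct.
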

\begin{proof}
Lemma 3 of \cite{JR:arxivdl} implies
$\pi_{l,k}^{-1} (G_{k,0})= G_{l,0}$ and $\pi_{l,k} (G_{l,0})= G_{k,0}$
for all $l \geq k \geq 1$.
We deduce $\overline{G}_{0}^{(0)} = \{ \varphi  \in \overline{G}^{(0)} : \varphi_{1} \in G_{1,0} \}$.

Since $\overline{G}_{0}^{(0)} = \varprojlim_{k \in {\mathbb N}} G_{k,0}$ and
$\pi_{l,k}: G_{l,0} \to G_{k,0}$ is surjective for all $l \geq k \geq 1$, the
group $\overline{G}_{0}^{(0)}$ is pro-algebraic by Lemma \ref{lem:cons}.
%
\end{proof}
We prove next that $\overline{G}_{u}^{(0)}$ is a pro-algebraic group if $G$ is solvable.
\begin{lem}
\label{lem:solaux}
Let $G$ be a solvable subgroup of $\diffh{}{n}$.
Then
\begin{itemize}
\item $\ell (\overline{G}^{(0)})=\ell (G)$.
\item $G_{k,u} \subset G_{k,0}$ for any $k \in {\mathbb N}$.
\item ${\rm exp}(t \log \varphi) \in \overline{G}_{u}^{(0)}$ for all
$\varphi \in \overline{G}_{u}^{(0)}$ and $t \in {\mathbb C}$.
\item $G_{k,u}$ is a normal connected algebraic subgroup of
the group $G_{k}$ for any $k \in {\mathbb N}$.
\item $\overline{G}_{u}^{(0)}$
is a pro-algebraic normal subgroup of $\overline{G}^{(0)}$.
Moreover $\overline{G}^{(0)} = \overline{G}_{u}^{(0)}$ if $G$ is unipotent.
\end{itemize}
\end{lem}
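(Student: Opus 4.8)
The plan is to reduce every assertion to the finite level algebraic groups $G_k$ and then pass to the projective limit, using that $\overline{G}^{(0)}=\varprojlim G_k$ with surjective transition maps (Lemma \ref{lem:surpro}). The one structural input driving the last four items is the behaviour of unipotent elements inside a solvable linear algebraic group, so I would first record that $G_k$ is solvable for every $k$: since $G_k$ is the Zariski closure of the image of $G$ in $D_k$ and the closure of a solvable group is solvable (the derived series of the closure is the closure of the derived series, cf. \cite{Borel}), solvability is inherited at each finite level. The same commutation of closure with the derived subgroup gives $(G_k)^{(i)}=(G^{(i)})_k$ for all $i$.

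For the first item, the inequality $\ell(G)\le\ell(\overline{G}^{(0)})$ is immediate from $G\subseteq\overline{G}^{(0)}$. For the reverse inequality I use that the natural projection $\overline{G}^{(0)}\to G_k$ is surjective (Lemma \ref{lem:surpro}), so it carries $(\overline{G}^{(0)})^{(m)}$ onto $(G_k)^{(m)}$ for $m=\ell(G)$. If $G^{(m)}=\{\mathrm{Id}\}$ then $(G_k)^{(m)}=(G^{(m)})_k=\{\mathrm{Id}\}$ for every $k$, whence $(\overline{G}^{(0)})^{(m)}$ projects trivially to each $D_k$ and is therefore trivial; this yields $\ell(\overline{G}^{(0)})\le\ell(G)$. (Alternatively this item is recorded in \cite{JR:arxivdl}.)

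The second and fourth items are the heart of the matter. For item two, given a unipotent $u\in G_k$ I would write $u=\exp(N)$ with $N$ nilpotent and observe that the Zariski closure of $\langle u\rangle$ equals the connected one-parameter subgroup $\{\exp(tN):t\in{\mathbb C}\}$; since this closure lies in $G_k$ and contains the identity it lies in $G_{k,0}$, so $G_{k,u}\subseteq G_{k,0}$. This identifies $G_{k,u}$ with the set of unipotent elements of the \emph{connected} solvable group $G_{k,0}$, and for item four I then invoke the structure theory of connected solvable linear algebraic groups: the unipotent part of $G_{k,0}$ is its unipotent radical, a closed connected normal subgroup, which is closed under products and normal in all of $G_k$ since unipotency is preserved by conjugation (Proposition \ref{pro:cr}). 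Item three is the same one-parameter subgroup observation applied fibrewise: if $\varphi\in\overline{G}^{(0)}_u$ then $N_k:=(\log\varphi)_k$ is nilpotent, and the subgroup $\{\exp(tN_k):t\in{\mathbb C}\}$, being the Zariski closure of $\langle\varphi_k\rangle$, consists of unipotent elements of $G_k$ and hence lies in $G_{k,u}$, so $\exp(t\log\varphi)$ lands in $\overline{G}^{(0)}_u$ for every $t$.

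For the last item I would realize $\overline{G}^{(0)}_u=\varprojlim G_{k,u}$: the transition maps $\pi_{l,k}$ send $G_{l,u}$ into $G_{k,u}$ because morphisms of algebraic groups preserve unipotency, so item (3) of Proposition \ref{pro:char} shows $\overline{G}^{(0)}_u$ is pro-algebraic, while normality of each $G_{k,u}$ in $G_k$ passes to the limit. If $G$ is unipotent then every generator $\varphi_k$ already lies in the subgroup $G_{k,u}$, whence $G_k\subseteq G_{k,u}$ by minimality of $G_k$ and thus $\overline{G}^{(0)}=\overline{G}^{(0)}_u$. The step I expect to be the main obstacle is item four: everything else cascades from the inverse-limit formalism, but checking simultaneously that $G_{k,u}$ is a subgroup, connected, and normal forces the full structure theory (solvability of the closure, Lie--Kolchin, and the unipotent-radical decomposition of connected solvable groups), and it is precisely here that the solvability hypothesis on $G$ is genuinely used.
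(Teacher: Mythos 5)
Your proof is correct, and where the paper actually argues rather than cites, your route coincides with it: item four is obtained from the fact that the set of unipotent elements of the connected solvable group $G_{k,0}$ is a closed connected normal subgroup (\cite{Borel}[Theorem 10.6]), and the last item from realizing $\overline{G}_{u}^{(0)}$ as $\varprojlim G_{k,u}$. The genuine difference is one of self-containedness: the paper delegates the first three items and the unipotent case of the last one to Lemmas 1 and 4 of \cite{JR:arxivdl}, whereas you supply direct arguments, all of which are sound --- the identity $(G_{k})^{(m)}=(G^{(m)})_{k}$ (Zariski closure commutes with taking derived groups) combined with the surjectivity of $\overline{G}^{(0)}\to G_{k}$ from Lemma \ref{lem:surpro} gives $\ell(\overline{G}^{(0)})=\ell(G)$; the observation that the Zariski closure of the cyclic group generated by a unipotent element is the connected one-parameter group $\{\exp(tN): t\in{\mathbb C}\}$ simultaneously yields $G_{k,u}\subset G_{k,0}$ and the stability of $\overline{G}_{u}^{(0)}$ under $\varphi\mapsto\exp(t\log\varphi)$; and minimality of $G_{k}$ together with item four gives $G_{k}=G_{k,u}$ when $G$ is unipotent, replacing the citation of Lemma 4 of \cite{JR:arxivdl}. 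Two minor divergences are worth recording. For pro-algebraicity you invoke item (3) of Proposition \ref{pro:char}, which only requires the inclusion $\pi_{l,k}(G_{l,u})\subset G_{k,u}$; the paper instead proves the equality $\pi_{l,k}(G_{l,u})=G_{k,u}$ via Remark \ref{rem:1forall} and applies Lemma \ref{lem:cons}. Both work, and yours asks for slightly less. Also, your parenthetical attribution of ``unipotency is preserved by conjugation'' to Proposition \ref{pro:cr} is off target (that proposition concerns $\ker\varpi=G_{u}$ for linear groups), but the fact itself is elementary and the argument is unaffected. Finally, you correctly locate where solvability is genuinely used: your proofs of the first three items never invoke it, consistent with the paper's remark that those items hold for arbitrary $G$, and it enters only through the solvability of $G_{k,0}$ in item four.
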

\begin{proof}
The three first properties are proved in Lemma 1 of \cite{JR:arxivdl}.
Indeed these properties hold true even if $G$ is not solvable.

Since $G_{k,0}$ is a connected solvable algebraic group, its
subset $G_{k,u}$ of unipotent elements is a connected normal closed subgroup of
$G_{k,0}$
(cf.  \cite{Borel}[Theorem 10.6, p. 137]).
We obtain that  $G_{k,u}$ is a subgroup (obviously normal) of $G_{k}$
for any $k \in {\mathbb N}$.

The previous property implies that $\overline{G}_{u}^{(0)}$ is a normal subgroup of
$\overline{G}^{(0)}$.
We have $\pi_{l,k}^{-1} (G_{k,u})= G_{l,u}$ for all $l \geq k \geq 1$ by Remark \ref{rem:1forall}.
Since $\pi_{l,k}(G_{l})= G_{k}$ by Lemma \ref{lem:surpro}, hence $\pi_{l,k}(G_{l,u})= G_{k,u}$ for all $l \geq k \geq 1$.
Therefore $\overline{G}_{u}^{(0)} = \varprojlim G_{k,u}$ is pro-algebraic by
Lemma \ref{lem:cons}.
%
The group $\overline{G}^{(0)}$ is unipotent if $G$ is unipotent by
Lemma 4 of \cite{JR:arxivdl}.
\end{proof}
\begin{rem}
\label{rem:fincod}
Let $G$ be a solvable subgroup of $\mathrm{Diff}({\mathbb C}^{n},0)$.
A useful feature of the groups
$\overline{G}_{0}^{(0)}$ and $\overline{G}_{u}^{(0)}$
is that in spite of being infinite dimensional in general,
$\overline{G}_{u}^{(0)}$ has finite codimension and
$\overline{G}_{0}^{(0)}$ has codimension $0$ in $\overline{G}^{(0)}$.
More precisely, the groups
$\overline{G}^{(0)}/\overline{G}_{u}^{(0)}$ and $\overline{G}^{(0)}/\overline{G}_{0}^{(0)}$
are isomorphic to $G_{1}/G_{1,u}$ and $G_{1}/G_{1,0}$ respectively by Remark \ref{rem:1forall} and
Proposition \ref{pro:ccipa}.
In particular $\overline{G}^{(0)}/\overline{G}_{0}^{(0)}$ is a finite group.
\end{rem}
We can associate Lie algebras to pro-algebraic groups.
\begin{defi}
\label{def:liealg}
Let $G$ be a subgroup of $\diffh{}{n}$.
We define the set
\[ {\mathfrak g} = \{ X \in \hat{\mathfrak X} \cn{n} : X_{k} \in {\mathfrak g}_{k} \
\forall k \in {\mathbb N} \} \]
where ${\mathfrak g}_{k}$ is  the Lie algebra of
$G_{k}$. We say that ${\mathfrak g}$ is the Lie algebra of the pro-algebraic group
$\overline{G}^{(0)}$.

Suppose $G$ is solvable, we define
\[ {\mathfrak g}_{N} = \{ X \in \hat{\mathfrak X} \cn{n} :
 X_{k} \in {\mathfrak g}_{k,u} \
\forall k \in {\mathbb N} \} \]
where   ${\mathfrak g}_{k,u}$ is the Lie algebra of
$G_{k,u}$.
\end{defi}
Notice that by definition ${\mathfrak g}_{N}$ is the Lie algebra of the pro-algebraic group
$\overline{G}_{u}^{(0)}$.

The Lie algebra of a pro-algebraic subgroup of $\diffh{}{n}$
shares analogous properties with the finite dimensional case.
\begin{pro}
\label{pro:lie} \cite{JR:arxivdl}[Proposition 2]
Let $G \subset \diffh{}{n}$ be a   group.
Then ${\mathfrak g}$   is
equal to
$\{ X \in \hat{\mathfrak X} \cn{n} : \mathrm{exp}(t X) \in \overline{G}^{(0)} \ \forall t \in {\mathbb C} \}$
and
$\overline{G}_{0}^{(0)}$ is generated by the set
$\{ \mathrm{exp}(X) : X \in {\mathfrak g} \}$.
Moreover if $G$ is unipotent (cf. Definition \ref{def:u}) the map
\[ {\rm exp}: {\mathfrak g} \to \overline{G}^{(0)} \]
is a bijection and ${\mathfrak g}$ is a Lie algebra of nilpotent formal vector fields.
\end{pro}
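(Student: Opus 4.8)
The plan is to prove all three assertions by reducing, level by level, to standard facts about linear algebraic groups and then transporting them through the projective limit $\overline{G}^{(0)} = \varprojlim G_{k}$. Two ingredients should be isolated first. The compatibility $(\mathrm{exp}(tX))_{k} = \mathrm{exp}(t X_{k})$, for every $k$ and $t$, is immediate from the definition of the exponential in Equation (\ref{equ:expfor}), since truncating modulo $\mathfrak{m}^{k+1}$ commutes with the power series defining $\mathrm{exp}$. Next, the differentials $d\pi_{l,k}\colon \mathfrak{g}_{l} \to \mathfrak{g}_{k}$ are surjective for $l \geq k$: indeed $\pi_{l,k}\colon G_{l} \to G_{k}$ is a surjective morphism of algebraic groups by Lemma \ref{lem:surpro}, and in characteristic zero such a morphism induces a surjection on Lie algebras; concretely $d\pi_{l,k}$ is the jet truncation $X_{l} \mapsto X_{k}$. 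By the inverse-system argument used in Lemma \ref{lem:cons}, surjectivity of the transition maps yields that the projection $\mathfrak{g} = \varprojlim \mathfrak{g}_{k} \to \mathfrak{g}_{k}$ is surjective for every $k$; that is, every $X_{k} \in \mathfrak{g}_{k}$ lifts to some $X \in \mathfrak{g}$.

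For the first assertion I would use the classical characterization that, for a closed linear algebraic group $G_{k}$ over $\mathbb{C}$ with Lie algebra $\mathfrak{g}_{k}$, one has $X_{k} \in \mathfrak{g}_{k}$ if and only if $\mathrm{exp}(t X_{k}) \in G_{k}$ for all $t \in \mathbb{C}$ (the one-parameter subgroup lies in the closed group $G_{k}$ exactly when its tangent vector at the identity belongs to $\mathfrak{g}_{k}$). Combining this with the compatibility above, $X \in \mathfrak{g}$ means $X_{k} \in \mathfrak{g}_{k}$ for all $k$, i.e. $(\mathrm{exp}(tX))_{k} = \mathrm{exp}(t X_{k}) \in G_{k}$ for all $k$ and all $t$, which is exactly $\mathrm{exp}(tX) \in \overline{G}^{(0)}$ for all $t$. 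This proves $\mathfrak{g} = \{X : \mathrm{exp}(tX) \in \overline{G}^{(0)}\ \forall t\}$.

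For the generation statement, let $H$ be the subgroup of $\overline{G}^{(0)}$ generated by $\{\mathrm{exp}(X) : X \in \mathfrak{g}\}$. Each $\mathrm{exp}(X)$ lies in $\overline{G}_{0}^{(0)}$ because $(\mathrm{exp}(X))_{k} = \mathrm{exp}(X_{k})$ lies in the connected component $G_{k,0}$, so $H \subseteq \overline{G}_{0}^{(0)}$. Conversely, a connected linear algebraic group $G_{k,0}$ is generated by the image of its exponential map, so $G_{k,0} = \langle \mathrm{exp}(\mathfrak{g}_{k})\rangle$; using the surjectivity of $\mathfrak{g} \to \mathfrak{g}_{k}$ established above, the projection of $H$ to level $k$ is all of $G_{k,0}$. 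The remaining, and genuinely delicate, point is to upgrade \textquotedblleft$H$ surjects onto every $G_{k,0}$\textquotedblright\ to \textquotedblleft$H = \overline{G}_{0}^{(0)}$\textquotedblright, since a priori $H$ is only dense in the Krull topology. I expect this to be the main obstacle. I would resolve it using the structure of the transition maps $\pi_{k+1,k}\colon G_{k+1,0} \to G_{k,0}$, whose kernels are unipotent abelian vector groups by the graded structure of jets: this allows lifting a fixed factorization $\varphi_{1} = \mathrm{exp}(A_{1})\cdots\mathrm{exp}(A_{m})$ at level one to compatible factorizations at all levels with a \emph{fixed} number $m$ of factors, the correcting terms at each step lying in the kernel and hence expressible through exponentials of liftable Lie-algebra elements. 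The inverse limit of these coherent choices then yields $X^{(1)},\dots,X^{(m)} \in \mathfrak{g}$ with $\varphi = \mathrm{exp}(X^{(1)})\cdots\mathrm{exp}(X^{(m)})$, giving $\overline{G}_{0}^{(0)} \subseteq H$.

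Finally, for the unipotent case I would invoke Lemma \ref{lem:solaux}, which gives $\overline{G}^{(0)} = \overline{G}_{u}^{(0)}$, so that $G_{k} = G_{k,u}$ is a unipotent algebraic group and $\mathfrak{g}_{k}$ consists of nilpotent endomorphisms for every $k$. On a unipotent group the exponential $\mathrm{exp}\colon \mathfrak{g}_{k} \to G_{k}$ is a polynomial bijection with polynomial inverse $\log$, compatible with the projections $\pi_{l,k}$ (this is the finite-level shadow of Proposition \ref{pro:ecmara}); passing to the projective limit and using $(\mathrm{exp} X)_{k} = \mathrm{exp}(X_{k})$ shows that $\mathrm{exp}\colon \mathfrak{g} \to \overline{G}^{(0)}$ is a bijection, with inverse given levelwise by $\log$. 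That $\mathfrak{g}$ is a Lie algebra of nilpotent formal vector fields is then immediate: every $X \in \mathfrak{g}$ has $X_{k}$ nilpotent for all $k$, which by Remark \ref{rem:1forallx} is equivalent to $j^{1} X$ being nilpotent, i.e. $X \in \hat{\mathfrak X}_{N} \cn{n}$.
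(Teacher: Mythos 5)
The paper does not actually prove this proposition; it is imported verbatim from \cite{JR:arxivdl}[Proposition 2], so there is no in-paper argument to compare against. Judged on its own, your first and third parts are essentially sound. (Two small remarks on the third: Lemma \ref{lem:solaux} is stated for solvable $G$, whereas the proposition is not; but the fact you actually need, namely that $G_{k}=G_{k,u}$ when $G=G_{u}$, follows directly from Remark \ref{rem:1forall} together with Kolchin's theorem, so this is cosmetic.)

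The genuine gap is in the step you yourself flag as the main obstacle: passing from ``$H=\langle \mathrm{exp}({\mathfrak g})\rangle$ surjects onto every $G_{k,0}$'' to ``$H=\overline{G}_{0}^{(0)}$''. Your mechanism --- lift a level-one factorization $\varphi_{1}=\mathrm{exp}(A_{1})\cdots\mathrm{exp}(A_{m})$ up the tower, correcting at each stage by an element of $\ker \pi_{k+1,k}$ while keeping the number of factors equal to $m$ --- does not work as stated. Writing the level-$(k+1)$ defect as $\mathrm{exp}(C)$ with $C\in\ker d\pi_{k+1,k}$ produces $m+1$ factors, and absorbing $\mathrm{exp}(C)$ into an adjacent factor, i.e.\ solving $\mathrm{exp}(A_{1}')=\mathrm{exp}(C)\,\mathrm{exp}(\tilde{A}_{1})$ with $A_{1}'$ still lifting $A_{1}$, is precisely the kind of statement that can fail: the relevant map $D\mapsto \mathrm{exp}(\tilde{A}_{1}+D)\,\mathrm{exp}(\tilde{A}_{1})^{-1}$ from the kernel of $d\pi_{k+1,k}$ to $\ker\pi_{k+1,k}$ has differential $\bigl(1-e^{-\mathrm{ad}\,\tilde{A}_{1}}\bigr)/\mathrm{ad}\,\tilde{A}_{1}$ on the space of order-$(k+1)$ vector fields, which degenerates at resonances of the linear part of $\tilde{A}_{1}$. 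Without a bound on the number of factors that is uniform in $k$, the inverse-limit step collapses, so as written the argument is incomplete.

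The repair is global rather than level-by-level, and uses your part 1. Choose once and for all $X_{1},\hdots,X_{m}\in{\mathfrak g}$ lifting $A_{1},\hdots,A_{m}\in{\mathfrak g}_{1}$ (surjectivity of ${\mathfrak g}\to{\mathfrak g}_{1}$, which you established), and set $\psi=\varphi\circ(\mathrm{exp}(X_{1})\circ\cdots\circ\mathrm{exp}(X_{m}))^{-1}\in\overline{G}^{(0)}$. Then $\psi_{1}=\mathrm{id}$, so $\psi$ is unipotent and $\psi_{k}\in G_{k,u}$ for every $k$; by the third item of Lemma \ref{lem:solaux} (which, as the paper notes, holds without solvability, since the Zariski closure of the cyclic group generated by a unipotent matrix is its one-parameter group) we get $\mathrm{exp}(t\log\psi)\in\overline{G}^{(0)}$ for all $t\in{\mathbb C}$, hence $\log\psi\in{\mathfrak g}$ by your first assertion. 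Therefore $\varphi=\mathrm{exp}(\log\psi)\circ\mathrm{exp}(X_{1})\circ\cdots\circ\mathrm{exp}(X_{m})$ is a product of $m+1$ exponentials of elements of ${\mathfrak g}$, which closes the gap.
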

\begin{rem}
The term ``connected component of the identity of $\overline{G}^{(0)}$" for
$\overline{G}_{0}^{(0)}$ is completely
justified. On the one hand $\overline{G}^{(0)}/\overline{G}_{0}^{(0)}$ is a
finite group by Remark \ref{rem:fincod}.
On the other hand every element $\varphi$ of $\overline{G}_{0}^{(0)}$
is of the form $\mathrm{exp}(X_{1}) \circ \hdots \circ \mathrm{exp}(X_{k})$
where $X_{1},\hdots, X_{k} \in {\mathfrak g}$ by Proposition \ref{pro:lie}.
Hence $\mathrm{exp}(t X_{1}) \circ \hdots \circ \mathrm{exp}(t X_{k})$
describes a path connecting the identity with $\varphi$
in $\overline{G}_{0}^{(0)}$ when $t$ varies in $[0,1]$.
\end{rem}
\begin{cor}
\label{cor:lie}
Let $G \subset \diffh{}{n}$ be a solvable group.
Then ${\mathfrak g}_{N}$ is a complex Lie algebra of nilpotent formal vector fields such that
\[ {\rm exp}: {\mathfrak g}_{N} \to \overline{G}_{u}^{(0)} \]
is a bijection.
Moreover ${\mathfrak g}_{N}$ is closed in the Krull topology.
\end{cor}
\begin{proof}
Denote $H=\overline{G}_{u}^{(0)}$.  Then $H$ is a solvable unipotent pro-algebraic
group by Lemma \ref{lem:solaux}.
Since ${\mathfrak g}_{N}$ is the Lie algebra of $H$,
the result is a consequence of Proposition \ref{pro:lie}.
Notice that ${\mathfrak g}_{N}$ is closed in
the Krull topology by construction.
\end{proof}
Let us present a method to construct unipotent pro-algebraic groups
as exponentials of Lie algebras of formal nilpotent vector fields.
\begin{pro}
\label{pro:consag}
Let ${\mathfrak g}$ be a Lie subalgebra of $\hat{\mathfrak X}_{N} \cn{n}$ that is closed in
the Krull topology. Then the set $\mathrm{exp} ({\mathfrak g}) := \{ \mathrm{exp} (X) : X \in {\mathfrak g} \}$
is a pro-algebraic unipotent
subgroup of $\diffh{}{n}$ whose Lie algebra is ${\mathfrak g}$.
\end{pro}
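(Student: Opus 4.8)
The plan is to realize $\mathrm{exp}({\mathfrak g})$ as a projective limit of unipotent algebraic matrix groups and to apply Lemma \ref{lem:cons}. For each $k \in {\mathbb N}$ set
\[ {\mathfrak h}_{k} = \{ X_{k} : X \in {\mathfrak g} \} \subset L_{k} . \]
Since $X \mapsto X_{k}$ is a Lie algebra homomorphism, ${\mathfrak h}_{k}$ is a Lie subalgebra of $L_{k}$; and since every $X \in {\mathfrak g}$ is nilpotent, each $X_{k}$ is a nilpotent operator on ${\mathfrak m}/{\mathfrak m}^{k+1}$ by Remark \ref{rem:1forallx}. Thus ${\mathfrak h}_{k}$ is a finite dimensional Lie algebra consisting of nilpotent operators, hence nilpotent by Engel's theorem.

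First I would establish the identity ${\mathfrak g} = \{ X \in \hat{\mathfrak X} \cn{n} : X_{k} \in {\mathfrak h}_{k} \ \forall k \in {\mathbb N} \}$, which is the technical heart of the argument and the step where the Krull-closedness of ${\mathfrak g}$ is indispensable. The inclusion $\subseteq$ is immediate from the definition of ${\mathfrak h}_{k}$. For the reverse inclusion, given $Y$ in the right-hand side, for each $k$ there is $X^{(k)} \in {\mathfrak g}$ with $(X^{(k)})_{k} = Y_{k}$, so $X^{(k)} \to Y$ in the Krull topology; as ${\mathfrak g}$ is Krull-closed we conclude $Y \in {\mathfrak g}$.

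Next I would pass to the group level. Because ${\mathfrak h}_{k}$ is a nilpotent Lie algebra of nilpotent operators, the exponential is a polynomial bijection with polynomial inverse onto a closed unipotent algebraic subgroup $H_{k} := \mathrm{exp}({\mathfrak h}_{k})$ of $D_{k}$ (note $H_{k} \subset D_{k}$ since ${\mathfrak h}_{k} \subset L_{k}$ is the Lie algebra of $D_{k}$), and the Lie algebra of $H_{k}$ is ${\mathfrak h}_{k}$; the group law is governed by the Baker--Campbell--Hausdorff formula, which is a finite sum of iterated brackets in this nilpotent setting. The surjectivity $\pi_{l,k}({\mathfrak h}_{l}) = {\mathfrak h}_{k}$, clear from the definition of ${\mathfrak h}_{k}$, together with the compatibility of $\mathrm{exp}$ with the projections to the jet levels, gives $\pi_{l,k}(H_{l}) = H_{k}$ for all $l \geq k \geq 1$. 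Hence $\varprojlim_{k} H_{k}$ is a pro-algebraic subgroup of $\diffh{}{n}$ by Lemma \ref{lem:cons}.

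Finally I would identify $\mathrm{exp}({\mathfrak g})$ with $\varprojlim_{k} H_{k}$. Using that $\mathrm{exp} : \hat{\mathfrak X}_{N} \cn{n} \to \diffh{u}{n}$ is a bijection (Proposition \ref{pro:ecmara}) compatible with the jet projections, a formal diffeomorphism $\varphi$ lies in $\varprojlim_{k} H_{k}$ precisely when $\varphi = \mathrm{exp}(Y)$ with $Y_{k} \in {\mathfrak h}_{k}$ for all $k$, that is, by the identity above, with $Y \in {\mathfrak g}$. Thus $\mathrm{exp}({\mathfrak g}) = \varprojlim_{k} H_{k}$ is a pro-algebraic subgroup of $\diffh{}{n}$, and it is unipotent since all its elements are exponentials of nilpotent vector fields. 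Its Lie algebra, computed from Definition \ref{def:liealg} as $\{ X \in \hat{\mathfrak X} \cn{n} : X_{k} \in \mathrm{Lie}(H_{k}) = {\mathfrak h}_{k} \ \forall k \}$, equals ${\mathfrak g}$ by the same identity, completing the proof. I expect the only genuine obstacle to be the Krull-closedness step; the remaining points are assembled from the cited results and standard facts about exponentials of nilpotent matrix Lie algebras.
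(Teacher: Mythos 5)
Your proof is correct and follows essentially the same route as the paper's: both project ${\mathfrak g}$ to the jet-level Lie algebras ${\mathfrak h}_{k}$, use nilpotency (Engel, Baker--Campbell--Hausdorff) to see that $\mathrm{exp}({\mathfrak h}_{k})$ is a unipotent algebraic subgroup of $D_{k}$ with Lie algebra ${\mathfrak h}_{k}$, and then use Krull-closedness of ${\mathfrak g}$ to identify $\mathrm{exp}({\mathfrak g})$ with $\varprojlim_{k} \mathrm{exp}({\mathfrak h}_{k})$. The only cosmetic difference is that the paper cites Borel's criterion that a Lie algebra of nilpotent endomorphisms is algebraic and then shows the associated connected group equals $\mathrm{exp}({\mathfrak h}_{k})$, whereas you argue directly that $\mathrm{exp}({\mathfrak h}_{k})$ is a closed subgroup via the polynomiality of $\mathrm{exp}$ and $\log$.
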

\begin{proof}
Every element $X$ of $\hat{\mathfrak X} \cn{n}$ induces a derivation of the
${\mathbb C}$-algebra ${\mathfrak m}/{\mathfrak m}^{k+1}$, i.e. an element of $L_{k}$.
Let ${\mathfrak g}_{k}$ be the Lie subalgebra of the Lie algebra $L_{k}$
induced by ${\mathfrak g}$.
The elements of the Lie algebra ${\mathfrak g}_{k}$
are nilpotent for any $k \in {\mathbb N}$ by Remark \ref{rem:1forallx}.
Hence ${\mathfrak g}_{k}$ is algebraic, i.e. it is the Lie algebra of a linear algebraic group
(cf. \cite{Borel}[p. 106 and Corollary 7.7, p. 108]). Let $G_{k}$ be the connected linear algebraic
subgroup of $\mathrm{GL}({\mathfrak m}/{\mathfrak m}^{k+1})$ whose Lie algebra is
${\mathfrak g}_{k}$ for $k \in {\mathbb N}$.
The Lie algebra ${\mathfrak g}_{k}$ is contained in the Lie algebra $L_{k}$ of
$D_{k}$. Thus $G_{k}$ is contained in $D_{k}$.

Since all elements of ${\mathfrak g}_{k}$ are nilpotent,
${\mathfrak g}_{k}$ is upper triangular up to a linear change of coordinates by
Engel's theorem (cf. \cite{Serre.Lie}[chapter V, section 2, p. 33]).
In such coordinates the diagonal coefficients of any element of ${\mathfrak g}_{k}$
vanish. In particular $\mathrm{exp} ({\mathfrak g}_{k})$
is a set of upper triangular matrices whose diagonal elements are all equal to $1$.
Therefore the group $G_{k} = \langle \mathrm{exp} ({\mathfrak g}_{k}) \rangle$
is upper triangular and all its elements are unipotent.
We deduce that $\mathrm{exp} : {\mathfrak g}_{k} \to G_{k}$ is a bijection
and in particular $G_{k} = \mathrm{exp} ({\mathfrak g}_{k})$ for any
$k \in {\mathbb N}$.
Since ${\mathfrak g}$ is closed in the Krull topology, we obtain
$\mathrm{exp} ({\mathfrak g}) = \varprojlim  \mathrm{exp} ({\mathfrak g}_{k})$.
Thus $\mathrm{exp} ({\mathfrak g})$ is a group that is closed in the Krull topology.
Moreover, since $\mathrm{exp} ({\mathfrak g}_{k})$ is algebraic for any $k \in {\mathbb N}$,
the group $\overline{\mathrm{exp} ({\mathfrak g})}^{(0)}$ is equal to $\mathrm{exp} ({\mathfrak g})$
and its Lie algebra is equal to ${\mathfrak g} = \varprojlim  {\mathfrak g}_{k}$
by construction.
\end{proof}
We already have two methods to construct pro-algebraic groups, namely
as groups defined by invariance properties (Remark \ref{rem:invpro}) or
as exponentials of Lie algebras of formal nilpotent vector fields.

The previous proposition makes explicit the Lie correspondence in the context of
unipotent pro-algebraic groups of formal diffeomorphisms.
\begin{pro}
\label{pro:liecor}
The map ${\mathfrak g} \to \mathrm{exp} ({\mathfrak g})$ is a bijection from
Lie subalgebras of $\hat{\mathfrak X}_{N} ({\mathbb C}^{n},0)$ that are closed in the Krull topology
to unipotent pro-algebraic subgroups of $\diffh{}{n}$.
\end{pro}
\begin{proof}
The map is well-defined by Proposition \ref{pro:consag}.
It is surjective by Proposition \ref{pro:lie}.
It is injective by Proposition \ref{pro:ecmara}.
\end{proof}
Let us give a characterization of the pro-algebraic closure of a unipotent group
in terms of one-parameter groups.
\begin{pro}
\label{pro:charupro}
Let $G$ be a unipotent subgroup of $\diffh{}{n}$. Then $\overline{G}^{(0)}$
is equal to the closure in the Krull topology of $\langle \cup_{\varphi \in G} \{ \varphi^{t} : t \in {\mathbb C} \} \rangle$.
\end{pro}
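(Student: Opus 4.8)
The plan is to establish the two inclusions separately. Write $H = \langle \cup_{\varphi \in G} \{ \varphi^{t} : t \in {\mathbb C} \} \rangle$ and let $\overline{H}$ denote its closure in the Krull topology; the goal is to prove $\overline{G}^{(0)} = \overline{H}$.

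To get $\overline{H} \subseteq \overline{G}^{(0)}$, I would first observe that $\overline{G}^{(0)}$ is unipotent, since $G$ is (Lemma \ref{lem:solaux}), so Proposition \ref{pro:lie} gives a bijection $\mathrm{exp} : {\mathfrak g} \to \overline{G}^{(0)}$ with ${\mathfrak g} \subseteq \hat{\mathfrak X}_{N} \cn{n}$. Given $\varphi \in G \subseteq \overline{G}^{(0)}$ there is a unique $X \in {\mathfrak g}$ with $\mathrm{exp}(X) = \varphi$; because $X$ is a nilpotent formal vector field and $\mathrm{exp}$ is injective on $\hat{\mathfrak X}_{N} \cn{n}$ (Proposition \ref{pro:ecmara}), this $X$ must be $\log \varphi$, so $\log \varphi \in {\mathfrak g}$. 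The characterization ${\mathfrak g} = \{ Y : \mathrm{exp}(t Y) \in \overline{G}^{(0)} \ \forall t \in {\mathbb C} \}$ of Proposition \ref{pro:lie} then yields $\varphi^{t} = \mathrm{exp}(t \log \varphi) \in \overline{G}^{(0)}$ for all $t$. Hence $H \subseteq \overline{G}^{(0)}$, and since $\overline{G}^{(0)}$ is Krull closed (Remark \ref{rem:pacck}) I conclude $\overline{H} \subseteq \overline{G}^{(0)}$.

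For the reverse inclusion the strategy is to show that $\overline{H}$ is itself pro-algebraic. Setting $H_{k}' = \{ \psi_{k} : \psi \in H \} \subseteq D_{k}$, the definition of the Krull topology gives $\overline{H} = \varprojlim_{k} H_{k}'$, and clearly $\pi_{l,k}(H_{l}') = H_{k}'$ for $l \geq k$. The heart of the matter is that each $H_{k}'$ is algebraic: projecting the generators of $H$, the group $H_{k}'$ is the abstract subgroup of $D_{k}$ generated by the one-parameter subgroups $W_{\varphi} = \{ \mathrm{exp}(t\,(\log \varphi)_{k}) : t \in {\mathbb C} \}$, where $(\log \varphi)_{k} \in L_{k}$ is nilpotent (Remark \ref{rem:1forallx}); each $W_{\varphi}$ is the image of the irreducible variety ${\mathbb A}^{1}$ through the identity under the polynomial morphism $t \mapsto \mathrm{exp}(t\,(\log \varphi)_{k})$. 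I would invoke the structure theorem for subgroups generated by images of irreducible varieties through the identity (\cite{Borel}[Proposition 2.2, p. 57]) to deduce that the subgroup generated by the family $\{ W_{\varphi} \}_{\varphi \in G}$ is a closed connected subgroup of $D_{k}$; in particular $H_{k}'$ is algebraic.

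With $H_{k}'$ algebraic for every $k$, Lemma \ref{lem:cons} shows $\overline{H} = \varprojlim_{k} H_{k}'$ is pro-algebraic and that the projection $\overline{H} \to H_{k}'$ is surjective, i.e. $\{ \psi_{k} : \psi \in \overline{H} \} = H_{k}'$. Since $G \subseteq H \subseteq \overline{H}$, this algebraic group contains $\{ \varphi_{k} : \varphi \in G \}$, hence contains the smallest algebraic subgroup $G_{k}$ with that property (Definition \ref{def:gk}); therefore $\overline{G}^{(0)} = \varprojlim_{k} G_{k} \subseteq \varprojlim_{k} H_{k}' = \overline{H}$, completing the proof. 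The main obstacle is precisely the algebraicity of $H_{k}'$: one must leave the world of formal diffeomorphisms, descend to each finite jet level $D_{k}$, and rely on the fact from linear algebraic group theory that a subgroup generated by (possibly infinitely many) unipotent one-parameter subgroups is already closed, a phenomenon with no counterpart for arbitrary generating families.
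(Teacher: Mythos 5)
Your proof is correct and follows essentially the same route as the paper: the easy inclusion via the Lie correspondence ($\log \varphi \in {\mathfrak g}$, hence $\varphi^{t} \in \overline{G}^{(0)}$), and the reverse inclusion by showing each jet-level group is algebraic via Chevalley's theorem on subgroups generated by connected algebraic subgroups (the same reference in \cite{Borel}), then concluding by minimality of the pro-algebraic closure. The only cosmetic differences are that you assemble pro-algebraicity through Lemma \ref{lem:cons} rather than Proposition \ref{pro:char}, and you cite Proposition \ref{pro:lie} rather than Proposition \ref{pro:liecor} for the first inclusion.
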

\begin{proof}
Let $H$ be the closure in the Krull topology of $\langle \cup_{\varphi \in G} \{ \varphi^{t} : t \in {\mathbb C} \} \rangle$.
The group $H$ is contained in $\overline{G}^{(0)}$ by Proposition \ref{pro:liecor}.

We denote
$C_{k}= \{ \varphi_{k} : \varphi \in G \}$ for any $k \in {\mathbb N}$.
It is unipotent, i.e. it consists of unipotent elements by Remark \ref{rem:1forall}.
Notice that given a unipotent $A \in \mathrm{GL}({\mathfrak m}/{\mathfrak m}^{k+1})$
there exists a unique nilpotent $\log A \in \mathrm{End}({\mathfrak m}/{\mathfrak m}^{k+1})$ such
that $\mathrm{exp}(\log A)=A$. Moreover if $A= \varphi_{k}$ for some $\varphi \in G$ then
$\log A = (\log \varphi)_{k}$.
Thus we obtain
\[ \cup_{A \in C_{k}} \{ \mathrm{exp} (t \log A) : t \in {\mathbb C} \} =
\cup_{\varphi \in G} \{ \varphi_{k}^{t} : t \in {\mathbb C} \} \]
A theorem of Chevalley implies that a group generated by a union of
connected algebraic groups is algebraic (cf. \cite{Borel}[section I.2.2, p. 57]);
in particular $\langle \cup_{A \in C_{k}} \{ \mathrm{exp} (t \log A) : t \in {\mathbb C} \} \rangle$
and then $\langle \cup_{\varphi \in G} \{ \varphi_{k}^{t} : t \in {\mathbb C} \} \rangle$ are algebraic.
Therefore $\{ \varphi_{k} : \varphi \in H \}$ is algebraic for any $k \in {\mathbb N}$.
Since $H$ is closed in the Krull topology by construction, $H$ is pro-algebraic by
Proposition \ref{pro:char}. The property
$G \subset H \subset \overline{G}^{(0)}$ implies $\overline{G}^{(0)} \subset H \subset \overline{G}^{(0)}$
and then $\overline{G}^{(0)} = H$.
\end{proof}
Proposition \ref{pro:charupro} provides an alternate characterization of unipotent pro-algebraic subgroups
of $\diffh{}{n}$. 
\begin{cor}
\label{cor:charupro}
Let $G$ be a unipotent subgroup of $\diffh{}{n}$. Then $G$ is pro-algebraic if and only if
$G$ is closed in the Krull topology and the one-parameter group $\{ \varphi^{t} : t \in {\mathbb C} \}$
is contained in $G$ for any $\varphi \in G$. \end{cor}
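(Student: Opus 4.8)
The plan is to derive both implications directly from Proposition \ref{pro:charupro}, which identifies $\overline{G}^{(0)}$ with the closure in the Krull topology of the group $H := \langle \cup_{\varphi \in G} \{ \varphi^{t} : t \in {\mathbb C} \} \rangle$ generated by the one-parameter groups of the elements of $G$. Once this description is at hand, the corollary reduces to comparing $H$ with $G$ itself.

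For the implication ``$\Rightarrow$'', suppose $G$ is pro-algebraic, so that $G = \overline{G}^{(0)}$. By Proposition \ref{pro:charupro} the group $G$ coincides with the closure of $H$ in the Krull topology; in particular $G$ is closed in the Krull topology (as already noted in Remark \ref{rem:pacck}). Moreover $\cup_{\varphi \in G} \{ \varphi^{t} : t \in {\mathbb C} \} \subset H \subset G$, and therefore the one-parameter group $\{ \varphi^{t} : t \in {\mathbb C} \}$ is contained in $G$ for every $\varphi \in G$. This settles the forward direction without any further input.

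For the converse ``$\Leftarrow$'', assume that $G$ is closed in the Krull topology and that $\{ \varphi^{t} : t \in {\mathbb C} \} \subset G$ for every $\varphi \in G$. The first step is to show that $H = G$. On the one hand, the set $\cup_{\varphi \in G} \{ \varphi^{t} : t \in {\mathbb C} \}$ is contained in the group $G$ by hypothesis, so the subgroup $H$ that it generates satisfies $H \subset G$. On the other hand $\varphi = \varphi^{1} \in H$ for every $\varphi \in G$, whence $G \subset H$; thus $H = G$. Applying Proposition \ref{pro:charupro} and using that $G$ is closed in the Krull topology, we obtain that $\overline{G}^{(0)}$ equals the Krull closure of $H=G$, which is $G$ itself. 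Hence $G = \overline{G}^{(0)}$ and $G$ is pro-algebraic.

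The proof is essentially a bookkeeping exercise once Proposition \ref{pro:charupro} is available, so I do not expect a genuine obstacle. The only point requiring a moment of care is the identity $H = G$ in the converse: it uses both that $G$ is a group containing each one-parameter subgroup (giving $H \subset G$) and the trivial relation $\varphi^{1} = \varphi$ (giving $G \subset H$), rather than any deeper structural property of pro-algebraic groups.
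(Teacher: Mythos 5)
Your proof is correct and follows essentially the same route as the paper: both reduce the statement to Proposition \ref{pro:charupro} by comparing $G$ with the group generated by the one-parameter subgroups $\{\varphi^{t}:t\in{\mathbb C}\}$, using $\varphi=\varphi^{1}$ for one inclusion and the closedness hypothesis to identify $\overline{G}^{(0)}$ with $G$. The only cosmetic difference is that the paper takes $H$ to be the Krull closure of that generated group rather than the group itself, which does not change the argument.
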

\begin{proof}
Let $H$ be the closure in the Krull topology of $\langle \cup_{\varphi \in G} \{ \varphi^{t} : t \in {\mathbb C} \} \rangle$.
We have $G \subset H =  \overline{G}^{(0)}$ by Proposition \ref{pro:liecor}.

Suppose that $G$ is pro-algebraic.  Then $H$ is equal to $G$ and it is clear that 
$\cup_{\varphi \in G} \{ \varphi^{t} : t \in {\mathbb C} \} \subset G$ and that $G$ is closed
in the Krull topology.

Suppose that $\cup_{\varphi \in G} \{ \varphi^{t} : t \in {\mathbb C} \} \subset G$ and that 
$G$ is closed in the Krull topology. 
Hence $H$ is contained in $G$. Since $G \subset H$, we deduce $H=G$. 
Finally we remark that $H= \overline{G}^{(0)}$ is pro-algebraic.
\end{proof}
\begin{rem}
\label{rem:exaclo}
As an example
let us see that the pro-algebraic closure of a cyclic group $\langle \varphi \rangle$ for
$\varphi \in \diffh{u}{n}$ is equal to $\{ \varphi^{t} : t \in {\mathbb C} \}$.
The group $\overline{\langle \varphi \rangle}^{(0)}$ contains
$\{ \varphi^{t} : t \in {\mathbb C} \}$ by the third property of Lemma \ref{lem:solaux}.
It suffices to show that $\{ \varphi^{t} : t \in {\mathbb C} \}$ is pro-algebraic.
This is a consequence of Proposition \ref{pro:consag} since $\{ \varphi^{t} : t \in {\mathbb C} \}$
is the exponential of the Lie algebra $\langle \log \varphi \rangle$ of dimension less or equal than $1$
generated by $\log \varphi$ which is closed in the Krull topology.
It is also an easy consequence of Corollary \ref{cor:charupro} since
$\{ \varphi^{t} : t \in {\mathbb C} \}$ is closed in the Krull topology.
%
%
\end{rem}
\begin{rem}
Remark \ref{rem:exaclo} provides conceptual explanations for the extension of invariance properties
to the one parameter group of $\varphi \in \diffh{u}{n}$.
For example suppose $f \circ \varphi \equiv f$ for some $f \in \hat{\mathcal O}_{n}$.
Since the group $G = \{ \eta \in \diffh{}{n} : f \circ \eta \equiv f \}$  is pro-algebraic by
Remark \ref{rem:invpro}, it contains
$ \{ \varphi^{t} : t \in {\mathbb C} \} = \overline{\langle \varphi \rangle}^{(0)}$ and hence
we deduce $(\log \varphi)(f) \equiv 0$.
\end{rem}
\subsection{Derived series}
The goal of this paper is obtaining sharp bounds for the derived length
of solvable groups of local diffeomorphisms.
Let us introduce some definitions for the sake of clarity.
\begin{defi}
\label{def:com}
Let $G$ be a group. We denote by $[\alpha,\beta]$ the commutator
$\alpha \beta \alpha^{-1} \beta^{-1}$ of $\alpha$ and $\beta$.
Given subgroups $H$, $L$ of $G$ we define $[H,L]$ as the subgroup
generated by the elements of the form $[h,l]$ for all $h \in H$ and $l \in L$.
We denote $G^{(0)}=G$.
We define by induction the $j$-derived group $G^{(j)}:= [G^{(j-1)}, G^{(j-1)}]$
of $G$ for $j >0$ (or just the derived group if $j=1$).
\end{defi}
\begin{defi}
We can define the derived series of a complex Lie algebra ${\mathfrak g}$
analogously. The subgroup generated by the commutators is replaced
by the Lie subalgebra generated by the Lie brackets.
We define ${\mathfrak g}^{(0)}:={\mathfrak g}$ and
${\mathfrak g}^{(j)}:=[{\mathfrak g}^{(j-1)}, {\mathfrak g}^{(j-1)}]$
for $j>0$.
\end{defi}
\begin{defi}
\label{def:length}
Let $G$ be a group (resp. Lie algebra).
We define the derived length
\[ \ell (G) = \min \{ k \in {\mathbb N} \cup \{0\} : G^{(k)} \ \mathrm{is} \ \mathrm{trivial} \} \]
where $\min \emptyset = \infty$ by convention. We say that $G$ is {\it solvable} if
$\ell (G) < \infty$.
\end{defi}
The next definitions were introduced in \cite{JR:arxivdl}.
\begin{defi}
\label{def:clser}
Let $G$ be a subgroup of $\diffh{}{n}$.
By induction we define the $j$-closed derived group $\overline{G}^{(j)}$ of $G$
as the closure in the Krull topology of $[\overline{G}^{(j-1)}, \overline{G}^{(j-1)}]$
for any $j \in {\mathbb N}$.
\end{defi}
Next results are straightforward.
\begin{lem}
\label{lem:normalg}
Let $G$ be a subgroup of $\diffh{}{n}$.
Then $\overline{G}^{(j)}$  is the closure
in the Krull topology of the $j$-derived group
of $\overline{G}^{(0)}$  for any $j \in {\mathbb N}$. Moreover, the series
$\hdots \triangleleft \overline{G}^{(m)}
\triangleleft \hdots \triangleleft \overline{G}^{(1)} \triangleleft \overline{G}^{(0)}$
is normal.
\end{lem}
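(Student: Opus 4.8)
Write $H = \overline{G}^{(0)}$ for the pro-algebraic closure and let $H^{(j)}$ denote the ordinary $j$-th derived group of $H$ in the sense of Definition \ref{def:com}. The plan is to prove the first assertion by induction on $j$ using a single commutator-closure identity, and then to deduce normality from the fact that the closure of a normal subgroup is normal.

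First I would record the topological input. Composition and inversion in $\diffh{}{n}$ are continuous in the Krull topology, since $j^{k}(\eta \circ \xi)$ and $j^{k}(\eta^{-1})$ depend only on $j^{k}\eta$ and $j^{k}\xi$; hence $\diffh{}{n}$ is a topological group. Moreover, for fixed $\varphi$ the sets $U_{k,\varphi}$ form a countable decreasing neighborhood base, so the Krull topology is first-countable and closures may be described by sequences. Two standard facts valid in any topological group will be used: the closure of a subgroup is again a subgroup, and conjugation by a fixed element is a homeomorphism, so the closure of a normal subgroup is normal.

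The heart of the argument is the identity $\overline{[\overline{A},\overline{A}]} = \overline{[A,A]}$, valid for an arbitrary subgroup $A$ of $\diffh{}{n}$. The inclusion $\overline{[A,A]} \subseteq \overline{[\overline{A},\overline{A}]}$ is immediate from $A \subseteq \overline{A}$. For the reverse inclusion, the commutator map $(\alpha,\beta) \mapsto \alpha\beta\alpha^{-1}\beta^{-1}$ is continuous, being built from composition and inversion. Given $\alpha,\beta \in \overline{A}$, first-countability provides sequences $\alpha_{i},\beta_{i} \in A$ with $\alpha_{i} \to \alpha$ and $\beta_{i} \to \beta$; continuity then yields $[\alpha_{i},\beta_{i}] \to [\alpha,\beta]$, and since each $[\alpha_{i},\beta_{i}]$ lies in the closed set $\overline{[A,A]}$, so does $[\alpha,\beta]$. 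As $\overline{[A,A]}$ is a subgroup containing every generator of $[\overline{A},\overline{A}]$, we conclude $[\overline{A},\overline{A}] \subseteq \overline{[A,A]}$, and closing both sides finishes the identity.

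With this in hand I would run the induction. For $j=1$ one has $\overline{G}^{(1)} = \overline{[H,H]} = \overline{H^{(1)}}$ directly from Definition \ref{def:clser}. Assuming $\overline{G}^{(j)} = \overline{H^{(j)}}$, applying the identity with $A = H^{(j)}$ gives
\[ \overline{G}^{(j+1)} = \overline{[\overline{G}^{(j)},\overline{G}^{(j)}]} = \overline{[\overline{H^{(j)}},\overline{H^{(j)}}]} = \overline{[H^{(j)},H^{(j)}]} = \overline{H^{(j+1)}}, \]
completing the first assertion. For normality, $H^{(j)}$ is a characteristic, hence normal, subgroup of $H$; since conjugation by any $g \in H = \overline{G}^{(0)}$ is a Krull-homeomorphism, $g\,\overline{H^{(j)}}\,g^{-1} = \overline{g H^{(j)} g^{-1}} = \overline{H^{(j)}}$, so $\overline{G}^{(j)} = \overline{H^{(j)}}$ is normal in $\overline{G}^{(0)}$. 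Because $\overline{G}^{(j+1)} \subseteq \overline{G}^{(j)} \subseteq \overline{G}^{(0)}$, normality in the ambient group forces $\overline{G}^{(j+1)} \triangleleft \overline{G}^{(j)}$, so the series is normal. The main obstacle is the commutator-closure identity: everything hinges on the continuity of the group operations together with first-countability, which is what legitimizes the passage from commutators of elements of $\overline{A}$ to membership in the closure of $[A,A]$.
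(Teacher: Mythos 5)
Your proof is correct; the paper itself offers no argument for this lemma (it is listed under ``Next results are straightforward''), and your commutator-closure identity $\overline{[\overline{A},\overline{A}]}=\overline{[A,A]}$, justified by continuity of the group operations and first-countability of the Krull topology, together with the induction and the fact that closures of normal subgroups are normal, is exactly the standard argument the author is implicitly invoking. Nothing is missing.
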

\begin{cor}
\label{cor:normalg}
Let $G$ be a solvable subgroup of $\diffh{}{n}$.
Denote $H = \overline{G}_{u}^{(0)}$.
Then   $\overline{H}^{(j)}$ is the closure
in the Krull topology of the $j$-derived group
of   $H$ for any $j \in {\mathbb N}$. Moreover, the series
\[  \hdots \triangleleft \overline{H}^{(m)}
\triangleleft \hdots \triangleleft \overline{H}^{(1)} \triangleleft H=\overline{H}^{(0)} \]
is normal. The group $\overline{H}^{(j)}$ is a normal subgroup of $\overline{G}^{(0)}$
for any $j \geq 0$.
\end{cor}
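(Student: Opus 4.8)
The plan is to reduce the first two assertions to Lemma \ref{lem:normalg} and to obtain the normality in $\overline{G}^{(0)}$ from the standard behaviour of the derived series of a normal subgroup. First I would observe that $H = \overline{G}_{u}^{(0)}$ is itself pro-algebraic by Lemma \ref{lem:solaux}, so that its pro-algebraic closure $\overline{H}^{(0)}$ coincides with $H$. Applying Lemma \ref{lem:normalg} verbatim to $H$ in place of $G$ then yields at once that $\overline{H}^{(j)}$ is the Krull closure of the $j$-derived group of $\overline{H}^{(0)}=H$, and that the series $\hdots \triangleleft \overline{H}^{(m)} \triangleleft \hdots \triangleleft \overline{H}^{(1)} \triangleleft H$ is normal. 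This settles the first two claims with no additional work; the only thing worth stressing is the identification $\overline{H}^{(0)}=H$, which is exactly what makes the ``closure of the $j$-derived group of $\overline{H}^{(0)}$'' in Lemma \ref{lem:normalg} become the ``closure of the $j$-derived group of $H$'' in the statement.

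For the remaining claim, that each $\overline{H}^{(j)}$ is normal in the larger group $\overline{G}^{(0)}$, I would write $H^{(j)}$ for the abstract $j$-derived group of $H$ (Definition \ref{def:com}), so that $\overline{H}^{(j)}$ is its Krull closure by the step just completed. The key input is that $H$ is a \emph{normal} subgroup of $\overline{G}^{(0)}$, again by Lemma \ref{lem:solaux}. Hence, for every fixed $g \in \overline{G}^{(0)}$, conjugation by $g$ restricts to a group automorphism of $H$; since the derived series is characteristic, being defined purely through iterated commutators, this forces $g\,H^{(j)}\,g^{-1}=H^{(j)}$ for all $j \geq 0$.

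The final and most delicate point is to pass from the normality of $H^{(j)}$ to that of its Krull closure $\overline{H}^{(j)}$. Here I would invoke that conjugation by a fixed element of $\diffh{}{n}$ is a homeomorphism for the Krull topology: the $k$-jet of $g\,\varphi\,g^{-1}$ depends only on finitely many jets of $\varphi$, so the map is continuous with continuous inverse. A homeomorphism commutes with the closure operator, whence $g\,\overline{H}^{(j)}\,g^{-1} = \overline{g\,H^{(j)}\,g^{-1}} = \overline{H}^{(j)}$, and as $g$ ranges over $\overline{G}^{(0)}$ the subgroup $\overline{H}^{(j)}$ is seen to be normal there. I expect this interchange of conjugation with the Krull closure to be the only genuine obstacle: the group-theoretic core is routine, but one must verify that taking the closure does not destroy normality, which is precisely what the homeomorphism property guarantees.
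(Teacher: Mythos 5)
Your proposal is correct and follows exactly the route the paper intends: the paper states this corollary (together with Lemma \ref{lem:normalg}) without proof, labelling both as straightforward, and the expected argument is precisely yours --- identify $\overline{H}^{(0)}=H$ via the pro-algebraicity of $\overline{G}_{u}^{(0)}$ from Lemma \ref{lem:solaux}, apply Lemma \ref{lem:normalg} to $H$, and deduce normality in $\overline{G}^{(0)}$ from the normality of $H$, the fully invariant nature of derived subgroups, and the fact that conjugation is a homeomorphism for the Krull topology. Your attention to the interchange of conjugation with the Krull closure is exactly the one non-formal point worth recording.
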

Given a subgroup $G$ of $\diffh{}{n}$ we construct its pro-algebraic closure
$\overline{G}^{(0)}$. The pro-algebraic nature extends to the closed derived groups.
\begin{pro}
\label{pro:proa}
Let $G$ be a subgroup of $\diffh{}{n}$. Then
$\overline{G}^{(j)}$ is a pro-algebraic group for any $j \in {\mathbb N} \cup \{0\}$.
More precisely $\{ \varphi_{k} : \varphi \in \overline{G}^{(j)} \}$ is the
algebraic matrix group  $G_{k}^{(j)}$ for all
$j \in {\mathbb N} \cup \{0\}$ and $k \in {\mathbb N}$ and we have
$\overline{G}^{(j)} = \varprojlim G_{k}^{(j)}$
for any $j \in {\mathbb N} \cup \{0\}$.
\end{pro}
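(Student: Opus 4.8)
The plan is to argue by induction on $j$, the case $j=0$ being exactly the definition of $\overline{G}^{(0)}$ together with Lemma \ref{lem:surpro}, which gives $\{\varphi_k : \varphi \in \overline{G}^{(0)}\} = G_k$ and $\overline{G}^{(0)} = \varprojlim G_k$; here one sets $G_k^{(0)} := G_k$, which is algebraic by Definition \ref{def:gk}. For the inductive step I would reduce the whole statement to a single claim: that the $k$-jet image of $\overline{G}^{(j)}$ equals the abstract derived group $G_k^{(j)} := [G_k^{(j-1)}, G_k^{(j-1)}]$ of the algebraic group $G_k^{(j-1)}$, and that $G_k^{(j)}$ is again an algebraic matrix group. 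Once this claim is in hand, pro-algebraicity comes for free: $\overline{G}^{(j)}$ is closed in the Krull topology by Definition \ref{def:clser}, and its $k$-jet images are algebraic for every $k$, so the implication $(2)\Rightarrow(1)$ of Proposition \ref{pro:char} shows $\overline{G}^{(j)}$ is pro-algebraic; the identity $\overline{G}^{(j)} = \varprojlim G_k^{(j)}$ then follows by applying Lemma \ref{lem:surpro} to the pro-algebraic group $\overline{G}^{(j)}$.

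The device that makes the Krull closure interact well with the jet maps is that $\varphi_k$ depends only on the $k$-jet of $\varphi$: the map $\pi_k : \varphi \mapsto \varphi_k$ is constant on each basic neighborhood $U_{k,\varphi}$. Consequently, for any subset $S$ of $\diffh{}{n}$ one has $\pi_k(\overline{S}) = \pi_k(S)$, where $\overline{S}$ denotes the Krull closure: if $\varphi \in \overline{S}$ then $U_{k,\varphi}$ meets $S$ in some $\eta$ with $j^k \eta = j^k \varphi$, whence $\varphi_k = \eta_k \in \pi_k(S)$. I would apply this with $S = [\overline{G}^{(j-1)}, \overline{G}^{(j-1)}]$, whose Krull closure is $\overline{G}^{(j)}$ by Definition \ref{def:clser}, to obtain $\pi_k(\overline{G}^{(j)}) = \pi_k([\overline{G}^{(j-1)}, \overline{G}^{(j-1)}])$.

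It then remains to identify this last set. By the inductive hypothesis $\pi_k$ restricts to a surjection $\overline{G}^{(j-1)} \to G_k^{(j-1)}$, and it is a group homomorphism (up to the usual composition convention on $\varprojlim D_k$); a surjective homomorphism carries the derived subgroup onto the derived subgroup, so $\pi_k([\overline{G}^{(j-1)}, \overline{G}^{(j-1)}]) = [G_k^{(j-1)}, G_k^{(j-1)}] =: G_k^{(j)}$, the $j$-th derived group of $G_k$. Since $G_k^{(j-1)}$ is an algebraic matrix group, the commutator subgroup $G_k^{(j)}$ is again a closed algebraic subgroup by the standard theory of linear algebraic groups (cf. \cite{Borel}). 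This closes the induction and, via the first paragraph, proves all three assertions.

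The step I expect to be the main obstacle is precisely the last one: guaranteeing that the abstract commutator subgroup $[G_k^{(j-1)}, G_k^{(j-1)}]$ is Zariski closed. For connected groups this is classical, but $G_k = G_k^{(0)}$ need not be connected, and the derived group of a disconnected algebraic group (for instance a finite nonabelian group) may itself be disconnected, so connectedness cannot simply be propagated through the induction. The way around this is to use that $[(G_k^{(j-1)})^{0}, (G_k^{(j-1)})^{0}]$ is a closed, connected, normal subgroup; after passing to the quotient by it one is left with an algebraic group whose identity component is abelian and whose component group is finite, and for such a group one checks directly that the commutator subgroup is a finite union of cosets of a closed connected subgroup, hence closed. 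Pulling this back shows $[G_k^{(j-1)}, G_k^{(j-1)}]$ is closed, as required.
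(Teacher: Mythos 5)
Your proof is correct and follows essentially the same route as the paper's: both rest on the closedness of the derived group of a (possibly disconnected) linear algebraic group, the surjectivity of the jet projections on the groups $\overline{G}^{(j-1)}$, and a density argument in the Krull topology identifying $\overline{G}^{(j)}$ with $\varprojlim G_{k}^{(j)}$ (your observation that $\pi_{k}$ is constant on the basic neighborhoods $U_{k,\varphi}$ is just a cleaner phrasing of the paper's limit argument with the lifts $\eta(k)$). The only stylistic difference is that you rederive by hand the closedness of $[G_{k}^{(j-1)},G_{k}^{(j-1)}]$ in the disconnected case, whereas the paper simply cites \cite{Borel}[2.3, p.~58], whose normality clause already covers disconnected groups.
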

\begin{proof}
The derived group of a linear algebraic group is algebraic (cf.  \cite{Borel}[2.3, p. 58]).
As a consequence $G_{k}^{(j)}$ is algebraic for all
$j \in {\mathbb N} \cup \{0\}$ and $k \in {\mathbb N}$.

We define $\tilde{G}^{(j)} = \{ \varphi  \in \diffh{}{n} : \varphi_{k} \in G_{k}^{(j)}  \ \forall k \in {\mathbb N} \}$.
Since $\pi_{l,k} (G_{l})= G_{k}$, we obtain $\pi_{l,k} (G_{l}^{(j)})= G_{k}^{(j)}$
for all $l \geq k \geq 1$ and $j \geq 0$.
The group $\tilde{G}^{(j)}$ is pro-algebraic for any $j \geq 0$ by Lemma \ref{lem:cons}.

The remainder of the proof is devoted to show $\overline{G}^{(j)} = \tilde{G}^{(j)}$
for any $j \geq 0$.
It suffices to prove the result for $j=1$.
The inclusion $\overline{G}^{(1)} \subset \tilde{G}^{(1)}$ is clear.

Let $\varphi \in \tilde{G}^{(1)}$. Fix $k \in {\mathbb N}$.
Then $\varphi_{k}$ is a product of commutators of elements of $G_{k}$.
Since $\pi_{j,k}: G_{j} \to G_{k}$ is surjective for $j \geq k$, we obtain
that there exists $\eta(k) \in (\overline{G}^{(0)})^{(1)}$ such that $(\eta(k))_{k}=\varphi_{k}$.
Therefore $\varphi$ is the limit in the Krull topology of the sequence
$(\eta(k))_{k \geq 1}$. We are done since the Krull closure of $(\overline{G}^{(0)})^{(1)}$ is equal
to $\overline{G}^{(1)}$ by definition.
\end{proof}
\begin{rem}
The previous results justify the definition of $\overline{G}^{(j)}$.
On the one hand $\overline{G}^{(j)}= \{Id\}$ is equivalent to
${G}^{(j)}=\{Id\}$ by Corollary \ref{cor:normalg}
and Lemma \ref{lem:solaux}.
On the other hand the group $\overline{G}^{(j)}$ is more
compatible with the pro-algebraic nature of $\overline{G}^{(0)}$
than $(\overline{G}^{(0)})^{(j)}$ by Proposition \ref{pro:proa}.
\end{rem}
Let us introduce the closed derived series of a Lie algebra.
\begin{defi}
Let ${\mathfrak g}$ be a Lie subalgebra of $\hat{\mathfrak X} \cn{n}$.
We denote by $\overline{\mathfrak g}^{(0)}$ the closure of ${\mathfrak g}$
in the Krull topology.
We define the $j$-closed derived Lie algebra $\overline{\mathfrak g}^{(j)}$
of ${\mathfrak g}$ as the closure in the Krull topology of
$[\overline{\mathfrak g}^{(j-1)}, \overline{\mathfrak g}^{(j-1)}]$ for any
$j \in {\mathbb N}$.
\end{defi}
\begin{defi}
Let $G$ be a solvable subgroup of $\diffh{}{n}$.
We denote $\overline{\mathfrak g}_{N}^{(j)} = \overline{({\mathfrak g}_{N})}^{(j)}$.
\end{defi}
The next lemma is the analogue of Lemma \ref{lem:normalg} for Lie algebras.
\begin{lem}
\label{lem:normal}
Let $G \subset \diffh{}{n}$ be a solvable group.
Then $\overline{\mathfrak g}^{(j)}$ (resp. $\overline{\mathfrak g}_{N}^{(j)}$)
is the closure in the Krull topology of ${\mathfrak g}^{(j)}$
(resp. $({\mathfrak g}_{N})^{(j)}$) for any $j \in {\mathbb N}$.
Moreover we have
$\varphi_{*} \overline{\mathfrak g}^{(j)} = \overline{\mathfrak g}^{(j)}$
and $\varphi_{*} \overline{\mathfrak g}_{N}^{(j)} = \overline{\mathfrak g}_{N}^{(j)}$
for all $\varphi \in \overline{G}^{(0)}$ and $j \in {\mathbb N}$. The series
\[ \hdots \triangleleft \overline{\mathfrak g}^{(m)}
\triangleleft \hdots \triangleleft \overline{\mathfrak g}^{(1)}
\triangleleft \overline{\mathfrak g}^{(0)} = {\mathfrak g}, \ \
\hdots \triangleleft \overline{\mathfrak g}_{N}^{(m)}
\triangleleft \hdots \triangleleft \overline{\mathfrak g}_{N}^{(1)}
\triangleleft \overline{\mathfrak g}_{N}^{(0)} = {\mathfrak g}_{N} \]
are normal. The Lie algebra $\overline{\mathfrak g}_{N}^{(j)}$ is an ideal of
${\mathfrak g}$ for any $j \geq 0$.
\end{lem}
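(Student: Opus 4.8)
The plan is to mimic the proof of Lemma \ref{lem:normalg} at the level of Lie algebras, the two new ingredients being the continuity of the Lie bracket in the Krull topology and the identity relating the flow of $\varphi_* X$ to the conjugate of the flow of $X$. The cornerstone observation is that, because $\hat{\mathfrak X} \cn{n} = \varprojlim L_{k}$, the $k$-jet of $[X,Y]$ depends only on $j^k X$ and $j^k Y$; hence the bracket $(X,Y) \mapsto [X,Y]$ is continuous in the Krull topology, and consequently the Krull closure of a subalgebra (resp.\ an ideal) is again a subalgebra (resp.\ an ideal). Similarly, for $\varphi \in \overline{G}^{(0)}$ the pushforward $\varphi_*$ acts on each $L_{k}$ by the adjoint action of $\varphi_k$, so $\varphi_*$ is a continuous Lie algebra automorphism of $\hat{\mathfrak X} \cn{n}$ that commutes with the Krull closure operator.

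First I would settle the identification of the closed derived algebras. Since ${\mathfrak g}$ and ${\mathfrak g}_N$ are Krull-closed (by their projective-limit definition and Corollary \ref{cor:lie}), we have $\overline{\mathfrak g}^{(0)} = {\mathfrak g} = {\mathfrak g}^{(0)}$ and $\overline{\mathfrak g}_N^{(0)} = {\mathfrak g}_N = ({\mathfrak g}_N)^{(0)}$. For the inductive step I would use that for any subset $A$ one has $\mathrm{cl}([\mathrm{cl}(A),\mathrm{cl}(A)]) = \mathrm{cl}([A,A])$: the inclusion $\supseteq$ is trivial, and $\subseteq$ follows from continuity of the bracket (the bracket of two limits is the limit of the brackets, and the closure of a subalgebra is a subalgebra). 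Applying this with $A = {\mathfrak g}^{(j-1)}$ (resp.\ $({\mathfrak g}_N)^{(j-1)}$) and the inductive hypothesis yields $\overline{\mathfrak g}^{(j)} = \mathrm{cl}({\mathfrak g}^{(j)})$, and likewise for ${\mathfrak g}_N$.

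Next I would prove the $\varphi_*$-invariance. For the base case I use Proposition \ref{pro:lie}: for $X \in {\mathfrak g}$ the conjugation identity $\exp(t\,\varphi_* X) = \varphi \circ \exp(tX) \circ \varphi^{-1}$ together with $\varphi, \varphi^{-1} \in \overline{G}^{(0)}$ shows $\varphi_* X \in {\mathfrak g}$, so $\varphi_* {\mathfrak g} = {\mathfrak g}$; for ${\mathfrak g}_N$ the same identity combined with the normality of $\overline{G}_u^{(0)}$ in $\overline{G}^{(0)}$ (Lemma \ref{lem:solaux}) gives $\varphi_* {\mathfrak g}_N = {\mathfrak g}_N$. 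Then, since $\varphi_*$ is a homeomorphic automorphism, an induction identical to the previous one---using $\varphi_*[\,\cdot\,,\,\cdot\,] = [\varphi_*\,\cdot\,, \varphi_*\,\cdot\,]$ and $\varphi_* \circ \mathrm{cl} = \mathrm{cl} \circ \varphi_*$---propagates invariance to every $\overline{\mathfrak g}^{(j)}$ and $\overline{\mathfrak g}_N^{(j)}$.

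Finally, for the ideal and normality statements I would argue as follows. A standard application of the Jacobi identity shows that if ${\mathfrak i}$ is an ideal of ${\mathfrak g}$ then $[{\mathfrak i},{\mathfrak i}]$ is again an ideal of ${\mathfrak g}$; hence each ${\mathfrak g}^{(j)}$ is an ideal of ${\mathfrak g}$, and by continuity of the bracket its Krull closure $\overline{\mathfrak g}^{(j)}$ is too. For the ${\mathfrak g}_N$-series I first establish that ${\mathfrak g}_N$ is an ideal of ${\mathfrak g}$: for $X \in {\mathfrak g}$ and $Y \in {\mathfrak g}_N$, the invariance $\exp(tX)_* {\mathfrak g}_N = {\mathfrak g}_N$ places the curve $t \mapsto (\exp(tX)_* Y)_k$ inside the linear subspace ${\mathfrak g}_{k,u}$ of $L_{k}$ for every $t$, so its derivative at $t=0$, namely the $k$-jet of $[X,Y]$, also lies in ${\mathfrak g}_{k,u}$; as this holds for all $k$ we get $[X,Y] \in {\mathfrak g}_N$. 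The Jacobi computation then makes every $({\mathfrak g}_N)^{(j)}$ an ideal of ${\mathfrak g}$, and passing to the Krull closure preserves this. Normality of both series follows at once, since an ideal of ${\mathfrak g}$ is a fortiori an ideal of each closed preceding term. I expect the main obstacle to be precisely this last point---promoting $\overline{\mathfrak g}_N^{(j)}$ to an ideal of the full ${\mathfrak g}$ rather than only of ${\mathfrak g}_N$---because it forces one to combine the differentiation of the adjoint action with the passage to the closure; everything else is a faithful transcription of the proof of Lemma \ref{lem:normalg}.
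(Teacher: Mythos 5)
Your proof is correct; the paper itself offers no argument for this lemma (it is presented, like Lemma \ref{lem:normalg}, as a straightforward analogue), and your write-up is exactly the intended elaboration: continuity of the bracket and of $\varphi_{*}$ in the Krull topology (via the identifications $[X,Y]_{k}=[X_{k},Y_{k}]$ and $(\varphi_{*}X)_{k}=\varphi_{k}^{-1}X_{k}\varphi_{k}$), Proposition \ref{pro:lie} plus normality of $\overline{G}_{u}^{(0)}$ for the base-case invariance, and differentiation of $t\mapsto \exp(tX)_{*}Y$ inside the linear subspaces ${\mathfrak g}_{k,u}$ (the same device as in Lemma \ref{lem:pol}) to make ${\mathfrak g}_{N}$ an ideal of ${\mathfrak g}$. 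No gaps.
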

The next proposition establishes that the derived length of a unipotent subgroup of
$\diffh{}{n}$ and its Lie algebra coincide.
\begin{pro}
\label{pro:lieder} \cite{JR:arxivdl}[Proposition 3]
Let $G \subset \diffh{}{n}$ be a solvable group.
Denote $H = \overline{G}_{u}^{(0)}$.
Then $\overline{\mathfrak g}_{N}^{(j)}$ is the Lie algebra of
$\overline{H}^{(j)}$ for any $j \in {\mathbb N}$. In particular we obtain
${\rm exp}(\overline{\mathfrak g}_{N}^{(j)}) = \overline{H}^{(j)}$.
\end{pro}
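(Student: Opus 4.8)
The plan is to push the statement down to each finite jet level $D_{k}$, where it reduces to the classical fact that in characteristic zero the Lie algebra of the derived group of a connected algebraic group is the derived Lie algebra, and then to reassemble everything by passing to the projective limit. The key simplification is that $\overline{H}^{(j)}$ is a \emph{unipotent pro-algebraic} group: it is pro-algebraic by Proposition \ref{pro:proa} and it is contained in the unipotent group $H=\overline{G}_{u}^{(0)}$ (Lemma \ref{lem:solaux}). Hence by Proposition \ref{pro:liecor} we have $\mathrm{exp}(\mathrm{Lie}(\overline{H}^{(j)}))=\overline{H}^{(j)}$, and both assertions will follow once we prove the single equality $\mathrm{Lie}(\overline{H}^{(j)})=\overline{\mathfrak g}_{N}^{(j)}$.

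First I would record the finite-level data. By Lemma \ref{lem:solaux} the set $\{\varphi_{k}:\varphi\in H\}$ equals $G_{k,u}$, a connected unipotent algebraic group whose Lie algebra is $\mathfrak{g}_{k,u}$; thus $\mathfrak{g}_{N}=\{X\in\hat{\mathfrak X}\cn{n}:X_{k}\in\mathfrak{g}_{k,u}\ \forall k\}$ by Definition \ref{def:liealg}. Applying Proposition \ref{pro:proa} to the subgroup $H$ gives $\{\varphi_{k}:\varphi\in\overline{H}^{(j)}\}=(G_{k,u})^{(j)}$ and $\overline{H}^{(j)}=\varprojlim_{k}(G_{k,u})^{(j)}$, so that Definition \ref{def:liealg} yields
\[ \mathrm{Lie}(\overline{H}^{(j)})=\{X\in\hat{\mathfrak X}\cn{n}:X_{k}\in\mathrm{Lie}((G_{k,u})^{(j)})\ \forall k\}. \]

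The technical heart is the finite-level identity $\mathrm{Lie}((G_{k,u})^{(j)})=(\mathfrak{g}_{k,u})^{(j)}$, the right-hand side denoting the $j$-th derived Lie algebra. This is where characteristic zero is indispensable: for a connected algebraic group $U$ over ${\mathbb C}$ the derived group $[U,U]$ is closed and connected and satisfies $\mathrm{Lie}([U,U])=[\mathrm{Lie}\,U,\mathrm{Lie}\,U]$ (cf. \cite{Borel}), and iterating gives the claim for each $j$. Granting it, $\mathrm{Lie}(\overline{H}^{(j)})=\{X:X_{k}\in(\mathfrak{g}_{k,u})^{(j)}\ \forall k\}$. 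On the Lie-algebra side, Lemma \ref{lem:normal} identifies $\overline{\mathfrak g}_{N}^{(j)}$ with the Krull closure of $(\mathfrak{g}_{N})^{(j)}$, which by the definition of the Krull topology is $\{X:X_{k}\in I_{k}\ \forall k\}$, where $I_{k}$ is the image of $(\mathfrak{g}_{N})^{(j)}$ under $X\mapsto X_{k}$. To finish I would check $I_{k}=(\mathfrak{g}_{k,u})^{(j)}$: the transition maps $\mathfrak{g}_{l,u}\to\mathfrak{g}_{k,u}$ are the differentials at the identity of the surjective morphisms $\pi_{l,k}:G_{l,u}\to G_{k,u}$ of connected groups, hence surjective, so as in Lemma \ref{lem:cons} the projection $\mathfrak{g}_{N}\to\mathfrak{g}_{k,u}$ is onto; since $X\mapsto X_{k}$ is a Lie homomorphism it then carries $(\mathfrak{g}_{N})^{(j)}$ onto $(\mathfrak{g}_{k,u})^{(j)}$. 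Comparing the two descriptions gives $\overline{\mathfrak g}_{N}^{(j)}=\mathrm{Lie}(\overline{H}^{(j)})$, and then $\mathrm{exp}(\overline{\mathfrak g}_{N}^{(j)})=\overline{H}^{(j)}$ by the first paragraph.

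The main obstacle is precisely the classical identity $\mathrm{Lie}([U,U])=[\mathrm{Lie}\,U,\mathrm{Lie}\,U]$ together with the bookkeeping needed to see that the inverse limit of the derived algebras $(\mathfrak{g}_{k,u})^{(j)}$ coincides with the Krull closure of $(\mathfrak{g}_{N})^{(j)}$; the surjectivity of the transition maps is what makes these two match. Everything else is a formal consequence of the pro-algebraic machinery already developed.
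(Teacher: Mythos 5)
The paper does not prove this proposition; it imports it verbatim as Proposition 3 of \cite{JR:arxivdl}, so there is no in-text argument to compare against. Your reconstruction is correct and is exactly the argument the surrounding machinery is designed to support: reduce to the jet groups $G_{k,u}$, invoke the characteristic-zero identity $\mathrm{Lie}(\mathcal{D}U)=\mathcal{D}\,\mathrm{Lie}(U)$ for the connected (unipotent) algebraic groups $(G_{k,u})^{(j)}$, and match the inverse limit of the derived algebras with the Krull closure of $({\mathfrak g}_{N})^{(j)}$ via surjectivity of the transition maps (Lemma \ref{lem:cons}, Lemma \ref{lem:solaux}, Proposition \ref{pro:proa}, Lemma \ref{lem:normal}). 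You correctly isolate the two real loads: the classical finite-dimensional identity (which genuinely needs characteristic zero, or alternatively the Baker--Campbell--Hausdorff formula in the unipotent case) and the surjectivity bookkeeping that makes $\varprojlim ({\mathfrak g}_{k,u})^{(j)}$ coincide with the closure of $({\mathfrak g}_{N})^{(j)}$. I see no gap.
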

Given a connected Lie group $G$ with Lie algebra ${\mathfrak g}$,
the Lie algebra of $G^{(1)}$ is the derived Lie algebra ${\mathfrak g}^{(1)}$.
Proposition \ref{pro:lieder} is an analogue of such result adapted to the context
of connected pro-algebraic groups.
\subsection{Classical results}
Let us introduce classic results, namely theorems by Lie and Kolchin
on triangularizable Lie algebras and groups and
the Baker-Campbell-Hausdorff formula.
\begin{teo}
\label{teo:Lie}
(Lie, cf. \cite{Serre.Lie}[chapter V, section 5, p. 36])
Let ${\mathfrak g}$ be a solvable Lie algebra over an algebraically closed field
$K$ of characteristic $0$. Let $\varrho$ be a linear representation of
${\mathfrak g}$ with representation space $V$. Then up to change of base in $V$
the Lie algebra $\varrho({\mathfrak g})$ consists of upper triangular matrices.
\end{teo}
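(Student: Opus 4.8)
The plan is to prove the classical Lie theorem by reducing it to the \emph{common eigenvector lemma} and then building a complete invariant flag by induction on $\dim_K V$, which I take to be finite since triangularizability presupposes it. The lemma to establish first is: if ${\mathfrak g}$ is solvable and acts via $\varrho$ on a nonzero finite-dimensional $K$-vector space $V$, then there is a linear form $\lambda\colon {\mathfrak g}\to K$ and a vector $0\neq v\in V$ with $\varrho(X)v=\lambda(X)v$ for every $X\in{\mathfrak g}$. Granting this, the theorem follows quickly: the line $Kv$ is ${\mathfrak g}$-invariant, so $\varrho$ descends to a representation of the still solvable ${\mathfrak g}$ on $V/Kv$; by the inductive hypothesis on dimension this quotient admits a complete invariant flag, and pulling it back through $V\to V/Kv$ yields a chain $0=W_0\subset W_1\subset\cdots\subset W_m=V$ of ${\mathfrak g}$-invariant subspaces with $\dim_K W_i=i$. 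Any basis adapted to this flag puts every $\varrho(X)$ simultaneously into upper triangular form.

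For the lemma itself I would induct on $\dim_K{\mathfrak g}$. Since ${\mathfrak g}$ is solvable and nonzero, $[{\mathfrak g},{\mathfrak g}]\neq {\mathfrak g}$, so I can pick a codimension-one subspace ${\mathfrak h}\supseteq [{\mathfrak g},{\mathfrak g}]$; because it contains the derived algebra it is automatically an ideal, and ${\mathfrak g}={\mathfrak h}\oplus K X_0$ for some $X_0$. By induction ${\mathfrak h}$ has a common eigenvector, i.e. there is a weight $\lambda\colon{\mathfrak h}\to K$ whose weight space $W=\{w\in V:\varrho(H)w=\lambda(H)w\ \forall H\in{\mathfrak h}\}$ is nonzero. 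The point is then to show $W$ is stable under $\varrho(X_0)$; once that is known, algebraic closedness of $K$ yields an eigenvector $v\in W$ of $\varrho(X_0)$, and such $v$ is a common eigenvector for all of ${\mathfrak g}={\mathfrak h}\oplus KX_0$ after extending $\lambda$ by its $X_0$-eigenvalue.

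The hard part, and the only place where $\mathrm{char}\,K=0$ is used, is proving the stability of $W$, which amounts to $\lambda([X_0,H])=0$ for all $H\in{\mathfrak h}$. For $w\in W$ and $H\in{\mathfrak h}$ one computes $\varrho(H)\varrho(X_0)w=\lambda(H)\varrho(X_0)w+\lambda([H,X_0])w$, so the stability is exactly equivalent to that vanishing. To obtain it, fix $0\neq v\in W$ and set $U_i=\mathrm{span}_K(v,\varrho(X_0)v,\ldots,\varrho(X_0)^{i-1}v)$, taking $n$ to be the largest index for which these vectors are independent. By construction $U_n$ is $\varrho(X_0)$-invariant, and a short induction shows that each $\varrho(H)$ with $H\in{\mathfrak h}$ preserves $U_n$ and acts on it upper-triangularly with every diagonal entry equal to $\lambda(H)$, so that $\mathrm{tr}(\varrho(H)|_{U_n})=n\,\lambda(H)$. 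Applying this to $[X_0,H]\in{\mathfrak h}$ (using that ${\mathfrak h}$ is an ideal), and noting that $U_n$ is invariant under both $\varrho(X_0)$ and $\varrho(H)$ so that $\varrho([X_0,H])|_{U_n}$ is a commutator of operators on $U_n$ and hence has trace $0$, I get $n\,\lambda([X_0,H])=0$. Since $\mathrm{char}\,K=0$ the scalar $n$ is invertible, so $\lambda([X_0,H])=0$, which finishes the lemma and therefore the theorem.

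I expect the trace identity $\mathrm{tr}(\varrho(H)|_{U_n})=n\,\lambda(H)$ together with the commutator-trace cancellation to be the delicate step, since it is precisely here that solvability (to produce the ideal ${\mathfrak h}$ and the weight $\lambda$) and characteristic zero (to invert $n$) combine; every other step is either the standard flag induction or routine linear algebra.
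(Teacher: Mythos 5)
Your argument is correct. The paper itself does not prove this statement --- it is quoted as a classical theorem with a citation to Serre's \emph{Lie Algebras and Lie Groups} --- and what you have written is the standard proof found there and in most textbooks: reduction to the common-eigenvector (invariant line) lemma via a codimension-one ideal ${\mathfrak h} \supseteq [{\mathfrak g},{\mathfrak g}]$, with the stability of the weight space $W$ under $\varrho(X_{0})$ established by the trace identity $\mathrm{tr}(\varrho(H)|_{U_{n}}) = n\,\lambda(H)$, the vanishing of the trace of a commutator, and the invertibility of $n$ in characteristic zero, followed by the routine flag induction on $\dim_{K} V$.
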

\begin{teo}
\label{teo:Lie-Kolchin}
(Lie-Kolchin, cf. \cite{Humphreys}[section 17.6, p. 113])
Let $G$ be a solvable connected subgroup of $\mathrm{GL}(n, F)$ where $F$ is an algebraically
closed field. Then up to a change of base $G$ is a group of upper triangular matrices.
\end{teo}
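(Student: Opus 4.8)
The plan is to reduce the full triangularization to the single assertion that $G$ possesses a common eigenvector in the representation space $V = F^{n}$. Once this is established, one triangularizes by induction on $n$: a common eigenvector $v$ spans a $G$-invariant line $L$, the induced action of $G$ on $V/L$ is again connected and solvable, so by the inductive hypothesis it is triangularizable, and lifting a triangularizing flag of $V/L$ together with $L$ yields a full $G$-stable flag $0 \subset L \subset \cdots \subset V$; choosing a basis adapted to this flag puts $G$ in upper triangular form. I would regard $G$ as a Zariski-closed connected subgroup (replacing it by its closure, which is again connected and solvable, changes nothing), so that the characters and orbit maps below are morphisms of algebraic groups.

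The common eigenvector is produced by induction on the derived length $\ell (G)$. If $G$ is abelian (the case $\ell (G) \leq 1$), it is a commuting family of operators, which over the algebraically closed field $F$ always admits a common eigenvector; this is the base case. For the inductive step set $N = [G,G]$, a closed connected subgroup with $\ell (N) = \ell (G) - 1$. By induction $N$ has a common eigenvector, so at least one weight space
\[ V_{\chi} = \{ v \in V : n \cdot v = \chi (n)\, v \ \ \forall n \in N \} \]
is nonzero, where $\chi$ ranges over the finite set of characters $\chi \colon N \to F^{*}$ occurring in $V$.

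Two applications of connectedness finish the argument. First, $G$ permutes the finitely many nonzero weight spaces $V_{\chi}$, since for $v \in V_{\chi}$ one computes $g \cdot V_{\chi} = V_{\chi'}$ with $\chi'(n) = \chi (g^{-1} n g)$; as a connected group cannot act nontrivially on a finite set, each $V_{\chi}$ is in fact $G$-invariant. Second, I claim $N$ acts trivially on a fixed nonzero $V_{\chi}$. Indeed, on $V_{\chi}$ every $n \in N$ is the scalar $\chi (n)$, and for a commutator the restriction $[g,h]|_{V_{\chi}}$ is a commutator in $\mathrm{GL}(V_{\chi})$, hence has determinant $1$; comparing with $\det (\chi ([g,h])\, \mathrm{Id}) = \chi ([g,h])^{\dim V_{\chi}}$ shows each $\chi ([g,h])$ is a $(\dim V_{\chi})$-th root of unity. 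Since these commutators generate $N$, the image $\chi (N) \subset F^{*}$ is finite; being also the connected image of $N$ under a morphism, it is trivial, so $\chi |_{N} \equiv 1$. Consequently $N$ acts trivially on $V_{\chi}$, the abelian quotient $G/N$ acts on $V_{\chi}$ by commuting operators, and such a family has a common eigenvector $v \in V_{\chi}$; this $v$ is a common eigenvector for all of $G$.

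The main obstacle is exactly this pair of connectedness arguments — the $G$-invariance of the weight spaces and the triviality of $N$ on them — and it is here that the hypothesis that $G$ be connected is indispensable (the conclusion fails for general finite solvable subgroups). I would also note that one cannot simply invoke Lie's theorem (Theorem \ref{teo:Lie}) for the Lie algebra of $G$, since $F$ is allowed to have positive characteristic and no exponential correspondence is then available; the group-theoretic weight-space argument above is what replaces it.
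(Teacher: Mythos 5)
Your argument is correct and is exactly the classical proof of the Lie--Kolchin theorem given in the reference the paper cites (Humphreys, \S 17.6): reduction to a common eigenvector, induction on the derived length, $G$-stability of the finitely many $N$-weight spaces via connectedness, and the determinant/root-of-unity argument forcing $\chi|_{N}\equiv 1$. The paper itself supplies no proof for this quoted theorem, so there is nothing further to compare; your closing remark that Lie's theorem cannot substitute for this in positive characteristic is also accurate.
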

\begin{teo}
\label{teo:Kolchin}
(Kolchin, cf. \cite{Serre.Lie}[chapter V, p. 35]).
Let $V$ a finite dimensional vector space over a field $K$.
Let $G$ be a subgroup of $\mathrm{GL} (V)$ such that each element $g \in G$ is
unipotent. Then up to a change of base $G$ is a group of upper triangular
matrices.
\end{teo}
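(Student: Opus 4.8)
The plan is to prove the statement by induction on $\dim_K V$, the inductive engine being the existence of a common nonzero fixed vector for $G$ (equivalently, of a proper $G$-invariant subspace with trivial action on a quotient). Once such a vector $0 \neq v_1 \in V$ is available, the flag assembles in the standard way: $G$ acts on $V/\langle v_1 \rangle$ again by unipotent operators, because $g - \mathrm{Id}$ nilpotent forces its induced map on any subquotient to be nilpotent; so by the inductive hypothesis $V/\langle v_1\rangle$ carries a $G$-invariant full flag on which $G$ is upper triangular. Lifting this flag and prepending $\langle v_1 \rangle$ yields a chain $0 = V_0 \subset V_1 \subset \cdots \subset V_n = V$ of $G$-invariant subspaces with $\dim V_i = i$; a basis adapted to it puts every $g \in G$ in upper triangular form, in fact with $1$'s on the diagonal since $G$ acts as the identity on each $V_i/V_{i-1}$.

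So everything reduces to the key lemma: a group $G$ of unipotent operators on $V \neq 0$ has a common nonzero fixed vector. First I would reduce to the case where $V$ is an irreducible $G$-module by passing to a minimal nonzero $G$-invariant subspace, since a common fixed vector there is one in $V$. For the irreducible case I would run the trace argument. Every $g \in G$ is unipotent, so all its eigenvalues equal $1$ and $\mathrm{tr}(g) = \dim V$ in $K$; as $G$ is a group, $gh$ is also unipotent and $\mathrm{tr}(gh) = \dim V$ as well. Hence $\mathrm{tr}\big((h-\mathrm{Id})g\big) = \mathrm{tr}(hg) - \mathrm{tr}(g) = 0$ for all $g,h \in G$. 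Fixing $h$ and letting $g$ range over $G$, linearity extends this to $\mathrm{tr}\big((h-\mathrm{Id})X\big) = 0$ for every $X$ in the $K$-subalgebra $A \subseteq \mathrm{End}_K(V)$ spanned by $G$. By Burnside's theorem irreducibility forces $A = \mathrm{End}_K(V)$, and nondegeneracy of the trace pairing then gives $h - \mathrm{Id} = 0$. Thus $G = \{\mathrm{Id}\}$, so by irreducibility $\dim V = 1$ and every vector is fixed.

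The hard part will be the field-theoretic bookkeeping, since $K$ is an arbitrary field whereas Burnside's theorem and the identity $\mathrm{tr}(g) = \dim V$ are cleanest over a splitting field and in characteristic $0$. I would handle this by extending scalars to the algebraic closure $\bar K$ to produce a common fixed vector for $G$ on $V \otimes_K \bar K$, and then descending through a $K$-rational invariant subspace: the subspace $W = \sum_{g \in G}(g-\mathrm{Id})V$ is $G$-invariant and defined over $K$, and $G$ acts trivially on $V/W$, so it suffices to show $W \neq V$; applying the fixed-vector statement to the (again unipotent) dual representation on $V^*$ gives $(V^*)^G \neq 0$, i.e. $W^{\perp} \neq 0$, hence $W \subsetneq V$. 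Feeding $W$ and $V/W$ back into the induction keeps the whole flag over the ground field $K$. For the present paper $K = {\mathbb C}$, so these complications collapse and the trace argument applies verbatim; I expect the cleanest write-up to simply invoke the classical statement as cited, with the induction-plus-common-fixed-vector skeleton above as its conceptual content. Note that, unlike Theorem~\ref{teo:Lie-Kolchin}, this argument needs neither connectedness nor algebraic closure, which is exactly why the trace/Burnside route rather than Lie--Kolchin is the natural tool here.
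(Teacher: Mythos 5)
The paper states this as a classical result and gives no proof of its own, only the citation to Serre; your argument (induction via a common fixed vector, with the irreducible case handled by the trace identity $\mathrm{tr}((h-\mathrm{Id})g)=0$, Burnside's theorem, and nondegeneracy of the trace pairing) is correct and is essentially the standard proof found in that reference. Your handling of the descent from $\bar K$ to $K$ via the subspace $W=\sum_{g\in G}(g-\mathrm{Id})V$ is sound, though one can shortcut it by noting that $V^{G}\otimes_{K}\bar K=(V\otimes_{K}\bar K)^{G}$, so a fixed vector over $\bar K$ already forces one over $K$.
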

\begin{rem}
\label{rem:unimnil}
A unipotent matrix group is nilpotent by Kolchin's theorem.
\end{rem}
\begin{rem}
\label{rem:unimsc}
Let $G$ be a unipotent linear algebraic group over ${\mathbb C}$ with
Lie algebra ${\mathfrak g}$.
The exponential mapping
${\rm exp}: {\mathfrak g} \to G$ is a diffeomorphism. Thus $G$ is
simply connected.
\end{rem}
%
%
\begin{rem} (Baker-Campbell-Hausdorff formula, cf. \cite{Corwin-Greenleaf}[ chapter 1]).
Let $G$ be a simply connected nilpotent Lie group with
Lie algebra ${\mathfrak g}$. The mapping
${\rm exp}: {\mathfrak g} \to G$ is an analytic diffeomorphism.
Let $X, Y \in {\mathfrak g}$. We have that $\log({\rm exp} (X){\rm exp} (Y))$ is equal to
\begin{equation}
\label{equ:BCH}
 \sum_{n>0}\frac {(-1)^{n-1}}{n} \sum_{ \begin{smallmatrix} {r_i + s_i > 0} \\ {1\le i \le n} \end{smallmatrix}}
\frac{(\sum_{i=1}^n (r_i+s_i))^{-1}}{r_1!s_1!\cdots r_n!s_n!}  [ X^{r_1} Y^{s_1} X^{r_2} Y^{s_2} \ldots X^{r_n} Y^{s_n} ]
\end{equation}
where
\[ [ X^{r_1} Y^{s_1} \ldots X^{r_n} Y^{s_n} ] =
[ \underbrace{X,\ldots[X}_{r_1} , \,\ldots\,
[ \underbrace{X,[X,\ldots[X}_{r_n} ,[ \underbrace{Y,[Y,\ldots Y}_{s_n} ]]\ldots]]. \]
The previous formula is due to Dynkin. Remarks \ref{rem:unimnil} and \ref{rem:unimsc}
imply that we can apply this formula in a unipotent subgroup $G$ of $\diffh{}{n}$.
Indeed $G_{k}$ is a unipotent simply connected nilpotent matrix group for any
$k \in {\mathbb N}$.
\end{rem}
\begin{rem}
The study of the action of $\overline{G}^{(0)}$ on the Lie algebra ${\mathfrak g}_{N}$ is
crucial to study the derived length of solvable subgroups of $\diff{}{n}$.
The possibility of using Baker-Campbell-Hausdorff formula makes ${\mathfrak g}_{N}$
easier to handle than ${\mathfrak g}$.
\end{rem}
\section{Algebraic structure of unipotent groups}
\label{sec:alg}
We have $\ell (G) \leq \ell (G/G_{u}) + \ell (G_{u})$ for any
solvable subgroup $G$ of $\diffh{}{n}$.
The group $G/G_{u}$ is isomorphic to $j^{1} G / (j^{1} G)_{u}$.
It is natural to study the algebraic properties of $L/L_{u}$ for a
solvable subgroup $L$ of $\mathrm{GL}(n,{\mathbb C})$ and of
unipotent subgroups of $\diffh{}{n}$. The former topic is treated in section \ref{sec:linear}
whereas the latter one is the subject of this section.
Of course calculating sharp bounds for the derived length in these two cases
does not suffice to find a sharp bound for solvable subgroups of $\diffh{}{n}$
since these problems are not independent in our context. In fact
$G^{(\sigma)}$ can be much smaller than $G_{u}$ for
$\sigma = \ell (G/G_{u})$. Sections \ref{sec:2} and \ref{sec:35} are devoted
to understand how these processes interact with each other.
\begin{defi}
We define $\hat{K}_{n}$ the field of fractions of the ring
of formal power series $\hat{\mathcal O}_{n}$.
\end{defi}
\begin{defi}
Let $G \subset \diffh{}{n}$ be a solvable group.
We define $\kappa (p)$
as the dimension of the ${\hat{K}_{n}}$-vector space
$\overline{\mathfrak g}_{N}^{(p)}  \otimes_{\mathbb C} \hat{K}_{n}$
for any $p \geq 0$.
It is obvious that $\kappa (p+1) \leq \kappa (p) \leq n$ for any $p \geq 0$.
\end{defi}
\begin{rem}
In \cite{JR:arxivdl} we defined $\kappa (p)$ as the dimension of
$\overline{\mathfrak g}^{(p)}  \otimes_{\mathbb C} \hat{K}_{n}$.
Here we focus on the Lie algebra of the unipotent part of $G$.
\end{rem}
\begin{defi}
Let ${\mathfrak g}$ be a Lie subalgebra of $\hat{\mathfrak X} \cn{n}$.
We define the field
\[ {\mathcal M}({\mathfrak g}) = \{ g \in \hat{K}_{n} : X(g) =0  \
\forall X \in {\mathfrak g} \} \]
of ``formal meromorphic" first integrals of ${\mathfrak g}$.
Let $G \subset \diffh{}{n}$ be a solvable group. We define
${\mathcal M}_{p} =  {\mathcal M}(\overline{\mathfrak g}_{N}^{(p)})$
for any $p \geq 0$.
\end{defi}
Given a solvable subgroup $G$ of $\diffh{}{n}$ the Lie algebra
$\overline{\mathfrak g}_{N}^{(p)}$ is invariant by the action of elements of
$\overline{G}^{(0)}$ for $p \geq 0$. As a consequence
$g \circ \varphi  \in {\mathcal M}_{p}$ for all $\varphi \in \overline{G}^{(0)}$ and
$g \in {\mathcal M}_{p}$. The actions defined by $G$ in $\overline{\mathfrak g}_{N}^{(p)}$
and ${\mathcal M}_{p}$ are crucial to understand the algebraic properties of the derived
series of $G$.

The next lemmas are key results in order to provide the classification of
subgroups of $\diffh{}{n}$ whose first jet is a connected matrix group
\cite{JR:arxivdl}[Theorem 6].
\begin{lem}
\label{lem:aux} \cite{JR:arxivdl}
Let $G \subset \diffh{}{n}$ be a solvable group.
Suppose $\kappa(p+1) < \kappa (p)$.
Denote $q=\kappa (p+1)$ and $m=\kappa(p)-\kappa(p+1)$.
Let $\{ Y_{1}, \hdots, Y_{q} \}     \subset \overline{\mathfrak g}_{N}^{(p+1)}$
be a base of the $\hat{K}_{n}$-vector space
$\overline{\mathfrak g}_{N}^{(p+1)}  \otimes_{\mathbb C} \hat{K}_{n}$.
Let $\{ X_{1}, \hdots, X_{m} \} \subset \overline{\mathfrak g}_{N}^{(p)}$
be a set such that
$\{ Y_{1}, \hdots, Y_{q} , X_{1}, \hdots, X_{m} \}$
is a base of
$\overline{\mathfrak g}_{N}^{(p)}  \otimes_{\mathbb C} \hat{K}_{n}$.
Consider
$Z = \sum_{j=1}^{q} b_{j} Y_{j} + \sum_{k=1}^{m} a_{k} X_{k} \in
\overline{\mathfrak g}_{N}^{(p)}$. Then
$a_{1}, \hdots , a_{m}$ belong to ${\mathcal M}_{p}$.
\end{lem}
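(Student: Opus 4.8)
The plan is to compute the Lie bracket $[W,Z]$ for an arbitrary $W \in \overline{\mathfrak g}_{N}^{(p)}$ and to read off the coordinates of $X_{1},\hdots,X_{m}$ in the fixed basis. Since $W,Z \in \overline{\mathfrak g}_{N}^{(p)}$, the bracket $[W,Z]$ belongs to $[\overline{\mathfrak g}_{N}^{(p)}, \overline{\mathfrak g}_{N}^{(p)}]$ and hence to $\overline{\mathfrak g}_{N}^{(p+1)}$. The decisive point will be that this membership forces the $X_{k}$-coordinates of $[W,Z]$ to vanish, while those coordinates turn out to be exactly $W(a_{k})$.

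First I would recall that a formal vector field $W$ acts as a derivation of $\hat{\mathcal O}_{n}$ and extends uniquely to a derivation of its field of fractions $\hat{K}_{n}$ by the quotient rule, so $W(a_{k})$ and $W(b_{j})$ are well defined for $a_{k},b_{j} \in \hat{K}_{n}$. Using the Leibniz identity $[W, cV] = W(c)\,V + c\,[W,V]$ for a function $c$ and vector fields $W,V$, I would expand
\[ [W,Z] = \sum_{j=1}^{q} W(b_{j})\, Y_{j} + \sum_{j=1}^{q} b_{j}\, [W, Y_{j}]
+ \sum_{k=1}^{m} W(a_{k})\, X_{k} + \sum_{k=1}^{m} a_{k}\, [W, X_{k}]. \]

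Next I would observe that $\overline{\mathfrak g}_{N}^{(p+1)}$ is an ideal of $\overline{\mathfrak g}_{N}^{(p)}$: indeed $Y_{j}, X_{k} \in \overline{\mathfrak g}_{N}^{(p)}$, and since $\overline{\mathfrak g}_{N}^{(p+1)} \subseteq \overline{\mathfrak g}_{N}^{(p)}$ we get $[W, Y_{j}], [W, X_{k}] \in [\overline{\mathfrak g}_{N}^{(p)}, \overline{\mathfrak g}_{N}^{(p)}] \subseteq \overline{\mathfrak g}_{N}^{(p+1)}$. Hence each bracket $[W, Y_{j}]$ and $[W, X_{k}]$ lies in $\overline{\mathfrak g}_{N}^{(p+1)} \otimes_{\mathbb C} \hat{K}_{n}$, which by hypothesis is spanned over $\hat{K}_{n}$ by $Y_{1},\hdots,Y_{q}$. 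Consequently the summations $\sum_{j} b_{j}[W,Y_{j}]$ and $\sum_{k} a_{k}[W,X_{k}]$ contribute only to the $Y$-components, and in the unique expansion of $[W,Z]$ in the basis $\{ Y_{1},\hdots,Y_{q}, X_{1},\hdots,X_{m} \}$ the coordinate of $X_{k}$ equals precisely $W(a_{k})$.

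Finally, since $[W,Z] \in \overline{\mathfrak g}_{N}^{(p+1)}$, its expansion in this basis involves only $Y_{1},\hdots,Y_{q}$, so all the $X_{k}$-coordinates vanish; thus $W(a_{k})=0$ for every $k$. As $W \in \overline{\mathfrak g}_{N}^{(p)}$ is arbitrary, this says exactly that $a_{k} \in {\mathcal M}(\overline{\mathfrak g}_{N}^{(p)}) = {\mathcal M}_{p}$. The only genuinely delicate point is the bookkeeping of which brackets land in the $Y$-span; once the ideal property is in hand the conclusion follows by comparing coordinates, so I do not expect a serious obstacle beyond checking that $W$ extends correctly to a derivation of $\hat{K}_{n}$ and that the coordinate expansion in the chosen basis is unique.
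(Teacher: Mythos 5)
Your argument is correct: the Leibniz expansion of $[W,Z]$ for arbitrary $W \in \overline{\mathfrak g}_{N}^{(p)}$, together with the facts that $[W,Y_{j}]$ and $[W,X_{k}]$ land in $\overline{\mathfrak g}_{N}^{(p+1)}$ (hence in the $\hat{K}_{n}$-span of $Y_{1},\hdots,Y_{q}$) and that $[W,Z]$ itself does too, forces the $X_{k}$-coordinates $W(a_{k})$ to vanish by uniqueness of coordinates in the chosen basis. The paper does not reproduce a proof here but cites \cite{JR:arxivdl}, and your bracket computation is essentially the argument given there, so there is nothing to flag.
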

\begin{lem}
\label{lem:aux2} \cite{JR:arxivdl}
Let $G \subset \diffh{}{n}$ be a solvable group.
Suppose $\kappa(p+1) < \kappa (p)$ and
$\kappa(r) = \kappa (p)$ for some $r < p$.
Consider the notations in Lemma \ref{lem:aux}.
Let  $Z = \sum_{j=1}^{q} b_{j} Y_{j} + \sum_{k=1}^{m} a_{k} X_{k}$
be an element of $\overline{\mathfrak g}_{N}^{(r)}$. Then
$a_{k}$ belongs to ${\mathcal M}_{p+1}$ for any $1 \leq k \leq m$
and
$X_{j}(a_{k}) \in {\mathcal M}_{p}$ for all $1 \leq j \leq m$ and $1 \leq k \leq m$
\end{lem}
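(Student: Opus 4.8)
The plan is to read off the coefficients $a_{k}$ and the derivatives $X_{j}(a_{k})$ as coordinates of suitable Lie brackets, and then to locate those brackets inside the chain of ideals $\overline{\mathfrak g}_{N}^{(p+1)} \subset \overline{\mathfrak g}_{N}^{(p)}$. First I would record the structural consequence of the hypothesis: since $r<p$ we have $\overline{\mathfrak g}_{N}^{(p)} \subset \overline{\mathfrak g}_{N}^{(r)}$, and $\kappa(r)=\kappa(p)$ forces the inclusion $\overline{\mathfrak g}_{N}^{(p)} \otimes_{\mathbb C} \hat{K}_{n} \subset \overline{\mathfrak g}_{N}^{(r)} \otimes_{\mathbb C} \hat{K}_{n}$ of $\hat{K}_{n}$-vector spaces of equal finite dimension to be an equality. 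In particular $\{Y_{1},\dots,Y_{q},X_{1},\dots,X_{m}\}$ is a base of $\overline{\mathfrak g}_{N}^{(r)} \otimes \hat{K}_{n}$ as well, so every $Z \in \overline{\mathfrak g}_{N}^{(r)}$ can be written uniquely as $\sum b_{j} Y_{j} + \sum a_{k} X_{k}$ with $b_{j},a_{k} \in \hat{K}_{n}$; this is exactly what makes the statement meaningful and is where the dimension hypothesis enters.

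Next comes the main computation. For a formal vector field $V$ and a function $c \in \hat{K}_{n}$ the Leibniz identity $[V,cW]=V(c)W+c[V,W]$ gives
\[ [V,Z] = \sum_{j} \big( V(b_{j}) Y_{j} + b_{j} [V,Y_{j}] \big) + \sum_{k} \big( V(a_{k}) X_{k} + a_{k} [V,X_{k}] \big). \]
The crucial point is that whenever $V \in \overline{\mathfrak g}_{N}^{(p)}$, all the brackets $[V,Y_{j}]$ and $[V,X_{k}]$ lie in $\overline{\mathfrak g}_{N}^{(p+1)}$: indeed $[V,Y_{j}] \in \overline{\mathfrak g}_{N}^{(p+1)}$ because $\overline{\mathfrak g}_{N}^{(p+1)}$ is an ideal of ${\mathfrak g}$ (Lemma \ref{lem:normal}) and $Y_{j} \in \overline{\mathfrak g}_{N}^{(p+1)}$, while $[V,X_{k}] \in [\overline{\mathfrak g}_{N}^{(p)}, \overline{\mathfrak g}_{N}^{(p)}] \subset \overline{\mathfrak g}_{N}^{(p+1)}$. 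Since $\overline{\mathfrak g}_{N}^{(p+1)} \otimes \hat{K}_{n} = \mathrm{span}_{\hat{K}_{n}}(Y_{1},\dots,Y_{q})$, none of these bracket terms contributes to the $X_{k}$-coordinates, and by uniqueness of the expansion in the base the coordinate of $X_{k}$ in $[V,Z]$ is exactly $V(a_{k})$.

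With this in hand both conclusions are short. For the second assertion I would take $V=X_{j} \in \overline{\mathfrak g}_{N}^{(p)}$; then $[X_{j},Z] \in \overline{\mathfrak g}_{N}^{(p)}$ (again because $\overline{\mathfrak g}_{N}^{(p)}$ is an ideal of ${\mathfrak g}$ and $Z \in {\mathfrak g}$), its $X_{k}$-coordinate is $X_{j}(a_{k})$, and Lemma \ref{lem:aux} applied to the element $[X_{j},Z]$ of $\overline{\mathfrak g}_{N}^{(p)}$ yields $X_{j}(a_{k}) \in {\mathcal M}_{p}$. For the first assertion I would let $V=Y$ range over $\overline{\mathfrak g}_{N}^{(p+1)}$; the same coordinate computation applies, since the brackets $[Y,Y_{j}]$ and $[Y,X_{k}]$ again land in $\overline{\mathfrak g}_{N}^{(p+1)}=\mathrm{span}(Y_{j})$, so the $X_{k}$-coordinate of $[Y,Z]$ is $Y(a_{k})$. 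But now $[Y,Z] \in \overline{\mathfrak g}_{N}^{(p+1)}$, whence its $X_{k}$-coordinates vanish; thus $Y(a_{k})=0$ for every $Y \in \overline{\mathfrak g}_{N}^{(p+1)}$, i.e. $a_{k} \in {\mathcal M}_{p+1}$.

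I expect the main obstacle to be bookkeeping rather than conceptual: one must work with genuine vector fields (so that the ideal properties of Lemma \ref{lem:normal} and the definition of the closed derived series apply) while simultaneously reading off coordinates in the $\hat{K}_{n}$-base, and keep straight the two different containments $[\,\cdot\,,Z] \in \overline{\mathfrak g}_{N}^{(p)}$ versus $[\,\cdot\,,Z] \in \overline{\mathfrak g}_{N}^{(p+1)}$ that separate the two conclusions. The one genuinely new input beyond Lemma \ref{lem:aux} is the sharper containment obtained by bracketing with $\overline{\mathfrak g}_{N}^{(p+1)}$ rather than $\overline{\mathfrak g}_{N}^{(p)}$; this is precisely what upgrades ``coordinate lies in ${\mathcal M}_{p}$'' to ``coordinate vanishes'' and produces the first integral statement $a_{k} \in {\mathcal M}_{p+1}$.
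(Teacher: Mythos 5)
The paper does not prove Lemma \ref{lem:aux2}; it is quoted from \cite{JR:arxivdl} without an argument, so there is no in-paper proof to compare against. Your proof is correct and is the natural argument for this statement: the hypothesis $\kappa(r)=\kappa(p)$ is used exactly where it must be (to make $\{Y_{1},\dots,Y_{q},X_{1},\dots,X_{m}\}$ a base of $\overline{\mathfrak g}_{N}^{(r)}\otimes_{\mathbb C}\hat{K}_{n}$, so the expansion of $Z$ is well defined), the Leibniz computation correctly isolates $V(a_{k})$ as the $X_{k}$-coordinate of $[V,Z]$ because all bracket terms fall into $\overline{\mathfrak g}_{N}^{(p+1)}\otimes\hat{K}_{n}=\mathrm{span}_{\hat{K}_{n}}(Y_{1},\dots,Y_{q})$ by the ideal property of Lemma \ref{lem:normal} and the inclusion $[\overline{\mathfrak g}_{N}^{(p)},\overline{\mathfrak g}_{N}^{(p)}]\subset\overline{\mathfrak g}_{N}^{(p+1)}$, and the two choices $V=Y\in\overline{\mathfrak g}_{N}^{(p+1)}$ (forcing the $X_{k}$-coordinate to vanish, hence $a_{k}\in{\mathcal M}_{p+1}$) and $V=X_{j}$ (landing in $\overline{\mathfrak g}_{N}^{(p)}$, to which Lemma \ref{lem:aux} applies, hence $X_{j}(a_{k})\in{\mathcal M}_{p}$) deliver the two conclusions. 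This matches the mechanism used throughout the paper (and in the cited source) for extracting first-integral information from coordinates in a $\hat{K}_{n}$-base.
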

The Propositions \ref{pro:maxd} and \ref{pro:intd} are useful to understand the nature
of ${\mathfrak g}_{N}$.
\begin{pro}
\label{pro:maxd} \cite{JR:arxivdl}[Proposition 10]
Let $G \subset \diffh{}{n}$ be a solvable group.
Suppose $\kappa(p)>0$ and ${\mathcal M}_{p} = {\mathbb C}$
for some $p \geq 0$.
Then we have $\kappa(p+1) < \kappa(p)$.
\end{pro}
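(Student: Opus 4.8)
The plan is to argue by contradiction: assume $\kappa(p)>0$, ${\mathcal M}_{p}={\mathbb C}$, and $\kappa(p+1)=\kappa(p)$, and derive a contradiction. Write $\mathfrak{a}=\overline{\mathfrak g}_{N}^{(p)}$ and $\mathfrak{b}=\overline{\mathfrak g}_{N}^{(p+1)}$, the latter being the Krull closure of $[\mathfrak{a},\mathfrak{a}]$. Both are ${\mathbb C}$-Lie algebras of nilpotent formal vector fields vanishing at the origin, $\mathfrak{b}$ is an ideal of $\mathfrak{a}$, and since $G$ is solvable so is $\mathfrak{a}$.

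First I would translate the statement into the language of meromorphic distributions. Consider the $\hat{K}_{n}$-submodules $M_{\mathfrak a}=\mathfrak{a}\otimes_{\mathbb C}\hat{K}_{n}$ and $M_{\mathfrak b}=\mathfrak{b}\otimes_{\mathbb C}\hat{K}_{n}$ of the rank-$n$ module of formal meromorphic vector fields, of ranks $\kappa(p)$ and $\kappa(p+1)$. The Leibniz identity $[fX,gY]=fg[X,Y]+fX(g)Y-gY(f)X$ together with $[X,Y]\in\mathfrak{a}$ shows that $M_{\mathfrak a}$ is involutive, and likewise $M_{\mathfrak b}$. A function $g$ lies in ${\mathcal M}_{p}$ exactly when $dg$ annihilates $M_{\mathfrak a}$, so the differentials of elements of ${\mathcal M}_{p}$ span an $\hat{K}_{n}$-space of dimension at most $n-\kappa(p)$, giving $\mathrm{trdeg}({\mathcal M}_{p}/{\mathbb C})\le n-\kappa(p)$; conversely the formal Frobenius theorem at the generic point integrates the involutive rank-$\kappa(p)$ distribution and supplies $n-\kappa(p)$ independent first integrals in $\hat{K}_{n}$, so in fact $\mathrm{trdeg}({\mathcal M}_{p}/{\mathbb C})=n-\kappa(p)$. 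Hence the hypothesis ${\mathcal M}_{p}={\mathbb C}$ is equivalent to $\kappa(p)=n$, and the standing assumption $\kappa(p+1)=\kappa(p)$ then forces $\kappa(p+1)=n$ as well; that is, both $M_{\mathfrak a}$ and $M_{\mathfrak b}$ equal the full module of meromorphic vector fields.

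It remains to rule out that the derived algebra of a nonzero solvable Lie algebra of nilpotent vector fields vanishing at the origin can have full generic rank $n$. This is the heart of the matter and the step I expect to be the main obstacle. My plan is to exploit solvability through a filtration argument. Since every $X\in\mathfrak{a}$ has nilpotent linear part, Engel's theorem lets me fix coordinates in which all linear parts $j^{1}X$ are strictly upper triangular; this induces a weight (order) filtration on $\hat{\mathfrak X}\cn{n}$ under which the bracket of $\mathfrak{a}$ strictly raises the filtration level of the leading terms, exactly as in the one-variable model where $[x^{a}\partial_x,x^{b}\partial_x]$ has order $a+b-1>\max(a,b)$. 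Passing to the associated graded Lie algebra $\mathrm{gr}\,\mathfrak{a}$, solvability should prevent the leading terms of $\mathfrak{b}=[\mathfrak{a},\mathfrak{a}]$ from regenerating a full tangent frame: I would show these leading parts are confined to a proper $\hat{K}_{n}$-subspace and then lift this conclusion back to obtain $\mathrm{rank}_{\hat{K}_{n}}M_{\mathfrak b}<n$, contradicting $\kappa(p+1)=n$.

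The delicate points are that rank is a generic (field) invariant while the triangularization lives at the origin, and that $\mathfrak{a}$, being only Krull-closed rather than finite dimensional, must be controlled level by level through the groups $G_{k,u}$. Here I would use that at each level the unipotent algebraic group $G_{k,u}$ is, by Lie--Kolchin and Engel, strictly upper triangular, so its derived group sits strictly deeper in the triangular filtration; the equal-rank hypothesis is incompatible with this strict nesting precisely because ${\mathcal M}_{p}={\mathbb C}$ removes any first integral that could otherwise absorb the discrepancy. Making this incompatibility rigorous, by matching the generic rank to the triangular filtration uniformly in $k$, is where the real work lies.
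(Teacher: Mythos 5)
There is a genuine gap, and it sits at the very first reduction. You claim, via a formal Frobenius argument at the generic point, that $\mathrm{trdeg}({\mathcal M}_{p}/{\mathbb C})=n-\kappa(p)$, hence that the hypothesis ${\mathcal M}_{p}={\mathbb C}$ is \emph{equivalent} to $\kappa(p)=n$. Only one direction is true: $\kappa(p)=n$ does force ${\mathcal M}_{p}={\mathbb C}$, but the converse fails, because the first integrals produced by integrating an involutive singular distribution need not lie in $\hat{K}_{n}$. A concrete counterexample is the single nilpotent formal vector field $X=x(x\partial/\partial x+\alpha y\,\partial/\partial y)$ in dimension $n=2$ with $\alpha\notin{\mathbb Q}$: the monomials $x^{a}y^{b}$ are eigenvectors of $x\partial/\partial x+\alpha y\,\partial/\partial y$ with eigenvalues $a+\alpha b$, none of which vanish except for $a=b=0$, so ${\mathcal M}(\langle X\rangle)={\mathbb C}$ while $\kappa=1<n$. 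This is not a peripheral issue: the paper applies Proposition \ref{pro:maxd} precisely in situations where $\kappa(p)<n$ and ${\mathcal M}_{p}={\mathbb C}$ (e.g.\ in subsection 6.3, with $\kappa(0)=n-1$), so your reduction collapses the proposition to the special case $\kappa(p)=n$, which is only Corollary \ref{cor:maxd} and does not suffice for the uses made of the result.

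Beyond that, the remaining step — ruling out that $[\mathfrak a,\mathfrak a]$ has full generic rank — is left as a sketch, and the filtration heuristic you invoke is shaky: in dimension $n>1$ the bracket of two vector fields of vanishing order $1$ again has vanishing order $1$ (e.g.\ $[y\,\partial/\partial x,\,z\,\partial/\partial y]=-z\,\partial/\partial x$), so the one-variable model $[x^{a}\partial,x^{b}\partial]$ of strictly increasing order does not transfer; making the graded argument work, and reconciling a generic-rank (field-theoretic) invariant with a triangularization at the origin uniformly in the jet level $k$, is exactly the content you defer. For comparison, note that the paper itself gives no proof of this statement — it is imported from \cite{JR:arxivdl}[Proposition 10] — but the mechanism visible in the in-paper analogues (Lemma \ref{lem:aux}, where coefficients of elements of $\overline{\mathfrak g}_{N}^{(p)}$ in a $\hat{K}_{n}$-basis drawn from $\overline{\mathfrak g}_{N}^{(p+1)}$ are shown to be first integrals and hence constants when ${\mathcal M}_{p}={\mathbb C}$, and Lemma \ref{lem:cte2}, where nilpotency of ${\mathfrak g}_{k,u}$ via Lie's theorem kills the residual constant) works directly with the hypothesis ${\mathcal M}_{p}={\mathbb C}$ rather than converting it into a statement about $\kappa(p)$; that is the route you would need to follow.
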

\begin{cor}
\label{cor:maxd}
Let $G \subset \diffh{}{n}$ be a solvable group
such that $\kappa (0)=n$. Then we have $\kappa(1) < n$.
\end{cor}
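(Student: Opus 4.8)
The plan is to derive the corollary directly from Proposition \ref{pro:maxd} applied at $p=0$. That proposition needs two hypotheses, $\kappa(0)>0$ and ${\mathcal M}_0={\mathbb C}$, and delivers exactly $\kappa(1)<\kappa(0)=n$. The first hypothesis is free, since $\kappa(0)=n\geq 1$. So the whole task reduces to showing that the assumption $\kappa(0)=n$ forces the field ${\mathcal M}_0={\mathcal M}(\overline{\mathfrak g}_{N}^{(0)})$ of formal meromorphic first integrals to be just ${\mathbb C}$.

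To establish ${\mathcal M}_0={\mathbb C}$, I would first observe that, by Equation (\ref{eq:fvf}), the $\hat{K}_{n}$-vector space $\hat{\mathfrak X} \cn{n} \otimes_{\mathbb C} \hat{K}_{n}$ of meromorphic formal vector fields has dimension exactly $n$ over $\hat{K}_{n}$, a basis being $\partial/\partial x_{1},\hdots,\partial/\partial x_{n}$ (each $\partial/\partial x_{i}$ sits in the tensor product as $(1/x_{i})\otimes(x_{i}\,\partial/\partial x_{i})$, since $x_{i}\,\partial/\partial x_{i}\in\hat{\mathfrak X} \cn{n}$). Because $\hat{K}_{n}$ is a field, tensoring the inclusion $\overline{\mathfrak g}_{N}^{(0)}\subset \hat{\mathfrak X} \cn{n}$ with $\hat{K}_{n}$ preserves injectivity, so $\overline{\mathfrak g}_{N}^{(0)} \otimes_{\mathbb C} \hat{K}_{n}$ is a $\hat{K}_{n}$-subspace of dimension $\kappa(0)=n$ inside an $n$-dimensional space; the two spaces therefore coincide.

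Now I would take any $g\in{\mathcal M}_0$, so that $X(g)=0$ for every $X\in\overline{\mathfrak g}_{N}^{(0)}$. Extending each $X$ to a derivation of $\hat{K}_{n}$ by the quotient rule, the evaluation $Y\mapsto Y(g)$ is $\hat{K}_{n}$-linear in $Y$; it vanishes on $\overline{\mathfrak g}_{N}^{(0)}$ and hence on its full $\hat{K}_{n}$-span, which by the previous paragraph is all of $\hat{\mathfrak X} \cn{n} \otimes_{\mathbb C} \hat{K}_{n}$. Evaluating at $Y=\partial/\partial x_{i}$ gives $\partial g/\partial x_{i}=0$ for every $i$, and in characteristic $0$ this forces $g\in{\mathbb C}$. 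Thus ${\mathcal M}_0={\mathbb C}$, and Proposition \ref{pro:maxd} yields $\kappa(1)<n$. I do not expect a genuine obstacle here; the only point requiring care is the passage from the ${\mathbb C}$-linear vanishing condition defining ${\mathcal M}_0$ to a $\hat{K}_{n}$-linear one, which is legitimate precisely because, with $g$ fixed, the map $Y\mapsto Y(g)$ is $\hat{K}_{n}$-linear once each vector field is viewed as acting through its coefficients.
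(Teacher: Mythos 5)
Your proposal is correct and follows the route the paper intends: the corollary is stated as an immediate consequence of Proposition \ref{pro:maxd}, and the only content to supply is that $\kappa(0)=n$ forces ${\mathcal M}_{0}={\mathbb C}$, which you do by noting that the $\hat{K}_{n}$-span of ${\mathfrak g}_{N}$ is then all of $\hat{\mathfrak X}\cn{n}\otimes_{\mathbb C}\hat{K}_{n}$, so any first integral is annihilated by every $\partial/\partial x_{i}$ and is constant. This matches the paper's (implicit) argument.
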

\begin{pro}
\label{pro:intd} \cite{JR:arxivdl}[Proposition 8]
Let $G \subset \diffh{}{n}$ be a solvable group.
Suppose $\kappa(p)>0$
for some $p \geq 0$.
Then we have $\kappa(p+2) < \kappa(p)$.
\end{pro}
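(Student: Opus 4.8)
The plan is to argue by contradiction using the monotonicity of $\kappa$ together with Proposition \ref{pro:maxd}. Since $\kappa(p+2) \leq \kappa(p+1) \leq \kappa(p)$, a failure of the assertion would force $\kappa(p) = \kappa(p+1) = \kappa(p+2) =: q$ with $q>0$. First I would record that these equalities already constrain the first integrals: the inclusion $\overline{\mathfrak g}_{N}^{(p+1)} \subseteq \overline{\mathfrak g}_{N}^{(p)}$ induces an inclusion of their $\hat{K}_{n}$-spans, and the equality $\kappa(p)=\kappa(p+1)$ of dimensions forces these spans to coincide. As the field of first integrals of a Lie algebra depends only on its $\hat{K}_{n}$-span, we obtain ${\mathcal M}_{p} = {\mathcal M}_{p+1} = {\mathcal M}_{p+2}$; denote this common field by ${\mathcal M}$.

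I would then split according to ${\mathcal M}$. If ${\mathcal M}= {\mathbb C}$ (which in particular covers $q=n$, cf. Corollary \ref{cor:maxd}), then Proposition \ref{pro:maxd} applies at the index $p$ and yields $\kappa(p+1) < \kappa(p)$, contradicting $\kappa(p+1)=q=\kappa(p)$. Hence the only surviving case is ${\mathcal M} \neq {\mathbb C}$, equivalently $q<n$, and this is where the real work lies.

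For the case ${\mathcal M}\neq{\mathbb C}$ the idea is a reduction in dimension along the nonconstant first integrals. The $\hat{K}_{n}$-span $V = \overline{\mathfrak g}_{N}^{(p)} \otimes_{\mathbb C} \hat{K}_{n}$ is an involutive distribution of generic rank $q<n$ whose transverse field is ${\mathcal M}$, of transcendence degree $n-q \geq 1$. The point I would exploit is that every element of $\overline{\mathfrak g}_{N}^{(p)}$ annihilates ${\mathcal M}$, so by the Leibniz rule the bracket on the tower $\overline{\mathfrak g}_{N}^{(p)} \supseteq \overline{\mathfrak g}_{N}^{(p+1)} \supseteq \overline{\mathfrak g}_{N}^{(p+2)}$ is ${\mathcal M}$-bilinear. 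This lets me treat the tower as acting in the $q$ ``leaf'' directions, with the transverse coordinates as parameters, i.e. as a solvable configuration of effective dimension $q<n$ whose own first integrals (relative to the base ${\mathcal M}$) are now trivial. In that reduced setting a relative form of Proposition \ref{pro:maxd} forces the rank to drop strictly within two steps, which pulls back to $\kappa(p+2)<q$ and gives the desired contradiction; the structural Lemmas \ref{lem:aux} and \ref{lem:aux2} are precisely the tools that track the coefficients of elements of $\overline{\mathfrak g}_{N}^{(p)}$ and $\overline{\mathfrak g}_{N}^{(p+1)}$ in ${\mathcal M}$ during this reduction.

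The main obstacle is making this reduction rigorous while \emph{preserving the solvable-group origin} of the data. One cannot simply restrict the vector fields to a generic leaf: already for a rank-one algebra such a restriction destroys the very hypotheses on which Proposition \ref{pro:maxd} depends. The correct reduction keeps the transverse variables as genuine parameters and is carried out over the field ${\mathcal M}$ after a formal Frobenius normalization with meromorphic coefficients; one must then verify that the resulting relative object is again of the type to which Proposition \ref{pro:maxd} applies, and that the rank $\kappa$ computed over $\hat{K}_{n}$ agrees with the one computed in the reduced setting. Controlling this base change, rather than disposing of the two easy cases, is the heart of the proof.
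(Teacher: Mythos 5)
The paper itself does not prove this proposition: it is imported verbatim from \cite{JR:arxivdl}[Proposition 8], so there is no in-paper argument to set yours against. Judged on its own terms, your proposal secures only the routine reductions. The monotonicity, the observation that $\kappa(p)=\kappa(p+1)=\kappa(p+2)$ forces the spans $\overline{\mathfrak g}_{N}^{(p)}\otimes_{\mathbb C}\hat{K}_{n}=\overline{\mathfrak g}_{N}^{(p+1)}\otimes_{\mathbb C}\hat{K}_{n}=\overline{\mathfrak g}_{N}^{(p+2)}\otimes_{\mathbb C}\hat{K}_{n}$ to coincide, the resulting equality ${\mathcal M}_{p}={\mathcal M}_{p+1}={\mathcal M}_{p+2}$, and the disposal of the case ${\mathcal M}_{p}={\mathbb C}$ via Proposition \ref{pro:maxd} are all correct (with the caveat that in \cite{JR:arxivdl} the analogue of Proposition \ref{pro:maxd} is numbered \emph{after} this proposition, so you would have to check that invoking it introduces no circularity).

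The case ${\mathcal M}_{p}\neq{\mathbb C}$, which you yourself call the heart of the proof, is not proved: it is deferred to a ``relative form of Proposition \ref{pro:maxd}'' that is neither stated nor established, and whose natural formulation cannot be what you need. Proposition \ref{pro:maxd} produces a drop in a \emph{single} step under the hypothesis that the field of first integrals is trivial; after your base change to ${\mathcal M}={\mathcal M}_{p}$ the relative first integrals are trivial by construction, so a literal relative analogue would yield $\kappa(p+1)<\kappa(p)$. That conclusion is false in general: any unipotent solvable group realizing the sharp bound $\ell(G)=2n-1$ of Theorem \ref{teo:uni} must have $\kappa(1)=\kappa(2)=n-1>0$ (and the towers analyzed in Section \ref{sec:35} routinely have $\kappa(2j-1)=\kappa(2j)$), necessarily with ${\mathcal M}_{1}\neq{\mathbb C}$ by Proposition \ref{pro:maxd}. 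Hence the relative statement you would need is exactly the two-step drop you are trying to prove, and the reduction merely restates the problem over a larger field of constants. Moreover, the tools you cite to track coefficients during the reduction, Lemmas \ref{lem:aux} and \ref{lem:aux2}, both take $\kappa(p+1)<\kappa(p)$ as a hypothesis and are therefore unavailable under the contradiction assumption $\kappa(p)=\kappa(p+1)=\kappa(p+2)$. The genuine content of the proposition --- why ${\mathcal M}$-bilinearity of the bracket together with solvability forces the $\hat{K}_{n}$-rank to decrease after \emph{two} derived steps even when the field of first integrals is large --- is missing from the proposal.
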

\begin{rem}
\label{rem:dim1}
Let $G \subset \diffh{}{n}$ be a unipotent solvable group with $\kappa (0) \leq p$.
Then we obtain $\ell (G) \leq 2p$ by Propositions \ref{pro:lieder} and
\ref{pro:intd}.
\end{rem}
Corollary \ref{cor:maxd} and Propositions \ref{pro:intd} and \ref{pro:lieder}
imply the inequality in next theorem.
\begin{teo}
\label{teo:uni} \cite{JR:arxivdl}[Theorem 4]
We have $\ell (G) \leq 2n-1$ for any
unipotent solvable subgroup $G$ of $\diffh{}{n}$.
Moreover the bound is sharp, i.e. there exists a unipotent solvable
subgroup $H$ of $\mathrm{Diff}({\mathbb C}^{n},0)$ such that
$\ell (H)=2n-1$.
\end{teo}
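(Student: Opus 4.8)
The inequality is a bookkeeping exercise with the invariant $\kappa$. Since $G$ is unipotent, Lemma \ref{lem:solaux} gives $\overline{G}^{(0)} = \overline{G}_u^{(0)} =: H$, and then Proposition \ref{pro:lieder} yields $\mathrm{exp}(\overline{\mathfrak g}_N^{(j)}) = \overline{H}^{(j)} = \overline{G}^{(j)}$ for every $j \geq 0$. As $\mathrm{exp}$ is a bijection (Corollary \ref{cor:lie}), we have $\overline{G}^{(j)} = \{Id\}$ if and only if $\overline{\mathfrak g}_N^{(j)} = 0$, which in turn is equivalent to $G^{(j)} = \{Id\}$ by Lemma \ref{lem:solaux}. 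Finally $\overline{\mathfrak g}_N^{(j)} = 0$ is equivalent to $\kappa(j) = 0$, since a complex vector space embeds into its extension of scalars $\overline{\mathfrak g}_N^{(j)} \otimes_{\mathbb C} \hat{K}_{n}$. Hence it suffices to prove $\kappa(2n-1) = 0$.

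Now I would run the descent of $\kappa$. If $\kappa(0) \leq n-1$, Proposition \ref{pro:intd} gives $\kappa(j+2) < \kappa(j)$ as long as $\kappa(j) > 0$, so $\kappa(2m) \leq \kappa(0) - m$ and already $\kappa(2n-2) = 0$. If instead $\kappa(0) = n$, Corollary \ref{cor:maxd} improves the first step to $\kappa(1) \leq n-1$, and iterating Proposition \ref{pro:intd} along the odd indices gives $\kappa(2m-1) \leq n-m$ for all $m \geq 1$; with $m = n$ this reads $\kappa(2n-1) = 0$. In both cases $\ell(G) \leq 2n-1$. Note that it is precisely Corollary \ref{cor:maxd} --- the fact that a full-rank Lie algebra must drop rank after a single derivation --- that distinguishes the sharp bound $2n-1$ from the naive $2n$.

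For the sharpness I would construct, by induction on $n$, a unipotent solvable subgroup $H_n \leq \mathrm{Diff}({\mathbb C}^{n},0)$ realizing the extremal profile $\kappa = (n, n-1, n-1, n-2, n-2, \ldots, 1, 1, 0)$, which by the discussion above forces $\ell(H_n) = 2n-1$. The base case is the abelian group $H_1 = \{ x \mapsto x/(1-tx) : t \in {\mathbb C} \}$ of diffeomorphisms tangent to the identity, of derived length $1$. For the inductive step I would regard $({\mathbb C}^{n},0)$ as fibered over $({\mathbb C}^{n-1},0)$, lift the action of a copy of $H_{n-1}$ to the total space, and adjoin generators acting on the new fiber coordinate so that a single derivation lowers the generic rank $\kappa$ by one (contributing one step), while the induced action on the field of first integrals of the new transverse direction is affine --- a M\"{o}bius action realized by convergent maps, as in the base case --- contributing a further step. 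Each inductive step thereby raises the derived length by $2$, for a total of $1 + 2(n-1) = 2n-1$.

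The delicate point, and the main obstacle, is the lower bound $\ell(H_n) \geq 2n-1$: one must verify that the iterated commutators do not degenerate, i.e. that $H_n^{(2n-2)} \neq \{Id\}$. I would establish this on the Lie algebra side, where Proposition \ref{pro:lieder} translates it into $\overline{\mathfrak g}_N^{(2n-2)} \neq 0$, computing the successive closed derived algebras with the Baker--Campbell--Hausdorff formula and exhibiting at each stage an explicit nonzero nilpotent vector field that survives the derivation. A second, genuinely necessary check is that every generator can be chosen convergent rather than merely formal, so that $H_n$ lands in $\mathrm{Diff}({\mathbb C}^{n},0)$ and not just in its formal completion $\diffh{}{n}$; the affine and M\"{o}bius transformations used transversally are convergent, which is exactly what makes this possible.
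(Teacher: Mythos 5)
Your proof of the inequality $\ell(G)\leq 2n-1$ is exactly the argument the paper intends: the theorem is imported from \cite{JR:arxivdl}, and the sentence preceding it states that the inequality follows from Corollary \ref{cor:maxd}, Proposition \ref{pro:intd} and Proposition \ref{pro:lieder}. Your reduction to $\kappa(2n-1)=0$ via Lemma \ref{lem:solaux}, Corollary \ref{cor:lie} and Proposition \ref{pro:lieder}, and the two-case descent (using Corollary \ref{cor:maxd} to force the extra drop when $\kappa(0)=n$), is precisely that derivation and is correct.

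The sharpness half is where your proposal falls short of a proof. The paper does not reprove it either --- it simply cites \cite{JR:arxivdl}[Theorem 4] --- but what you offer is a plan, not an argument: the inductive construction of $H_n$ is described only qualitatively (``adjoin generators acting on the new fiber coordinate so that a single derivation lowers the generic rank by one''), and the crucial verification $H_n^{(2n-2)}\neq\{Id\}$ is explicitly deferred (``I would establish this \ldots exhibiting at each stage an explicit nonzero nilpotent vector field''). Nothing is exhibited, so the lower bound is not established. If you want to complete this within the resources of the present paper, note that Proposition \ref{pro:mas2} already carries out exactly the inductive step you describe for general solvable groups, adding the coordinate $x_{n+1}\mapsto a\,x_{n+1}+b$ and proving $\ell(G)=\ell(H)+2$ by showing the difference operator $\Delta(f)=f\circ\phi_0-f$ is not nilpotent; restricting to $a\in 1+{\mathfrak m}$ (so that $j^{1}$ of every element is unipotent) and starting from your base case $H_1=\{x\mapsto x/(1-tx)\}$ of derived length $1$ yields a unipotent group of derived length $1+2(n-1)=2n-1$, with convergent generators. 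That adaptation, or a direct citation of \cite{JR:arxivdl}, is needed to close the gap.
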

\section{Linear groups}
\label{sec:linear}
Fix a field $F$ and $n \in {\mathbb N}$.
As explained in section \ref{sec:alg} we are interested in
calculating
$\max \{ \ell (G/G_{u}) : G < \mathrm{GL}(n,F) \ \mathrm{and} \ \ell (G) < \infty \}$.
Let us explain how this problem is related to the solvability properties
of completely reducible groups.

Consider a subgroup $G$ of $\mathrm{GL}(n,F)$.
We introduce a construction that is used by Dixon in \cite{Dixon-solvable}.
Consider a sequence
\[ \{0\} = V_{0} \subset V_{1} \subset \hdots \subset V_{k} = F^{n} \]
of $G$-invariant linear subspaces such that
$V_{j} \neq V_{j+1}$ and the action of $G$ on $V_{j+1}/V_{j}$ is irreducible for
any $0 \leq j < k$. It is always possible up to introduce more subspaces in the 
sequence.
We denote by $A_{|V_{j+1}/V_{j}}$ the action of $A$ on $V_{j+1}/V_{j}$.
We define the map
\[ \begin{array}{ccccc}
\varpi & : & G & \to & \mathrm{GL} (V_{1}/V_{0}) \times \hdots \times \mathrm{GL} (V_{k}/V_{k-1}) \\
      &   & A & \mapsto & \left( A_{|V_{1}/V_{0}}, \hdots , A_{|V_{k}/V_{k-1}} \right). \\
\end{array}
\]
By construction the group $\varpi (G)$ is a completely reducible subgroup of
$\mathrm{GL}(n,F)$. The kernel of the morphism $\varpi$ is contained in the
set of
unipotent elements; this can be very easily proved by considering a base of
$F^{n}$ that contains a base of $V_{j}$ for any $0 \leq j \leq k$.
We can be more precise in the solvable case.
\begin{pro}
\label{pro:cr}
Let $G$ be a solvable subgroup of $\mathrm{GL}(n, F)$ where $F$
is a field of characteristic $0$. Then
we have $\ker (\varpi) = G_{u}$.
In particular $G/G_{u}$ is isomorphic to
a completely reducible subgroup of $\mathrm{GL}(n,F)$.
Moreover if $G$ is completely reducible then $G_{u}=\{Id\}$.
\end{pro}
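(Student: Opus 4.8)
The plan is to prove the two inclusions $\ker(\varpi) \subseteq G_u$ and $G_u \subseteq \ker(\varpi)$ separately; the first is already established above (an element of $\ker(\varpi)$ is block upper triangular with identity diagonal blocks in a basis refining the flag, hence unipotent). For the reverse inclusion I would reduce to the following key lemma: \emph{a solvable subgroup $H$ of $\mathrm{GL}(W)$ acting irreducibly over a field of characteristic zero contains no unipotent element other than the identity}. Granting this, let $A \in G_u$; for each $j$ the induced transformation $A_{|V_{j+1}/V_j} = \varpi_{j}(A)$ is unipotent, because $(A_{|V_{j+1}/V_j} - \mathrm{Id})^{n}$ is induced by $(A - \mathrm{Id})^{n} = 0$, and it lies in the irreducible solvable group $\varpi_{j}(G)$; the lemma then forces $\varpi_{j}(A) = \mathrm{Id}$ for every $j$, that is $A \in \ker(\varpi)$. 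Combining the inclusions gives $G_u = \ker(\varpi)$, which is automatically a normal subgroup, and $G/G_u \cong \varpi(G)$ is completely reducible by construction.

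To prove the lemma I would pass to the Zariski closure. Let $\overline{H}$ be the closure of $H$ in $\mathrm{GL}(W)$; it is a solvable algebraic group, and since stabilizers of subspaces are Zariski closed it has the same invariant subspaces as $H$, hence acts irreducibly. Let $\overline{H}^{0}$ be the identity component and $\overline{H}^{0}_{u}$ its unipotent part, a closed connected normal (indeed characteristic, hence normal in $\overline{H}$) unipotent subgroup by the structure theory of connected solvable groups (\cite{Borel}[Theorem 10.6, p. 137]). By Kolchin's theorem (Theorem \ref{teo:Kolchin}) the fixed space $W^{\overline{H}^{0}_{u}}$ is nonzero, and it is $\overline{H}$-invariant: if $u \in \overline{H}^{0}_{u}$, $h \in \overline{H}$ and $w$ is fixed, then $h^{-1} u h$ fixes $w$ by normality, so $u(hw) = hw$. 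Irreducibility then yields $W^{\overline{H}^{0}_{u}} = W$, i.e. $\overline{H}^{0}_{u} = \{\mathrm{Id}\}$, so $\overline{H}^{0}$ is a torus consisting of semisimple elements.

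It remains to locate an arbitrary unipotent $A \in H$ inside $\overline{H}^{0}$. The map $t \mapsto \exp(t \log A)$, where $\log A = \sum_{k \geq 1} (-1)^{k-1}(A-\mathrm{Id})^{k}/k$ is a finite sum with entries in the base field since the characteristic is zero, has connected image containing $\mathrm{Id}$ and $A$, so $A \in \overline{H}^{0}$; being simultaneously unipotent and semisimple it equals $\mathrm{Id}$. The main subtlety I anticipate is descent to a non-algebraically-closed $F$: passing to $\overline{F}$ would break the $F$-irreducibility of the quotients $V_{j+1}/V_{j}$, so I must instead run the closure argument over $F$ itself, invoking that a field of characteristic zero is perfect, whence $\overline{H}^{0}$ and its unipotent radical $\overline{H}^{0}_{u}$ are defined over $F$; then $W^{\overline{H}^{0}_{u}}$ is an $F$-rational $H$-invariant subspace and $F$-irreducibility applies as stated. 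Finally, for the last assertion, if $G$ is completely reducible then each irreducible summand of $F^{n}$ carries an irreducible solvable action of $G$, so the lemma eliminates every unipotent element summand by summand, giving $G_u = \{\mathrm{Id}\}$.
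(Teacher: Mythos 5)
Your argument is correct, but it takes a genuinely different route from the paper's. The paper first shows that $G_{u}$ is a normal subgroup of $G$ (by passing to the algebraic closure of $G$ over $\overline{F}$ and applying Lie--Kolchin, Theorem \ref{teo:Lie-Kolchin}), then observes that $\varpi (G_{u})$ is a normal subgroup of the completely reducible group $\varpi (G)$, hence completely reducible by Clifford's theorem (\cite{Wehrfritz}[Corollary 1.8, p. 5]), and finally that a non-trivial unipotent group is never completely reducible by Kolchin's theorem --- so $\varpi (G_{u})=\{Id\}$. You instead work element by element: your key lemma (an irreducible solvable subgroup of $\mathrm{GL}(W)$ in characteristic zero has no non-trivial unipotent element) is applied to each quotient $V_{j+1}/V_{j}$ separately, which lets you bypass Clifford's theorem entirely and never requires knowing in advance that $G_{u}$ is a subgroup (normality falls out at the end from $G_{u}=\ker \varpi$). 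The price is that your proof of the lemma leans on more structure theory of algebraic groups --- identity component, unipotent radical of a connected solvable group, and rationality over a non-algebraically-closed $F$ --- where the paper only needs Kolchin's fixed-vector theorem plus irreducibility; in fact, once you know $H_{u}$ is a normal subgroup (which follows from the same Lie--Kolchin argument the paper uses), your lemma admits the shorter proof ``$W^{H_{u}}\neq 0$ is $H$-invariant, hence equals $W$''. Two points in your write-up deserve explicit justification: the inclusion $\{\exp (t\log A): t\}\subset \overline{H}$ holds because this one-parameter group is precisely the Zariski closure of $\langle A\rangle$ in characteristic zero (the entries of $\exp (t\log A)$ are polynomial in $t$, so any regular function vanishing on all integer powers of $A$ vanishes identically), and the $F$-rationality of $W^{\overline{H}^{0}_{u}}$, which you correctly flag, follows from Galois descent since the fixed space is stable under $\mathrm{Gal}(\overline{F}/F)$. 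Both approaches ultimately rest on Kolchin's Theorem \ref{teo:Kolchin} and the irreducibility of the quotients; yours is more self-contained at the level of representation theory, the paper's is shorter given Clifford's theorem as a black box.
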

\begin{proof}
We already proved $\ker (\varpi) \subset G_{u}$. Let us show $G_{u} \subset \ker (\varpi)$.

We consider $G$ as a subgroup of $\mathrm{GL}(n,\overline{F})$ where $\overline{F}$
is the algebraic closure of $F$.
Let $\overline{G}$ be the algebraic closure of $G$.
The set of unipotent elements $\overline{G}_{u}$ of $\overline{G}$
is contained in the connected component of the identity $\overline{G}_{0}$
(cf. \cite{Humphreys}[Lemma C, p. 96]).
We apply Lie-Kolchin's theorem to obtain that up to a change of base
$\overline{G}_{0}$ is a group of upper triangular matrices.
Since $\overline{G}_{u}$ is the subgroup of $\overline{G}_{0}$
of matrices whose diagonal elements are all equal to $1$,
$G_{u}$ is a normal subgroup of $G$.

The group $\varpi (G_{u})$ is a normal subgroup of $\varpi (G)$.
A normal subgroup of a completely reducible linear group is completely
reducible by Clifford's theorem (cf. \cite{Wehrfritz}[Corollary 1.8, p. 5]).
Then $\varpi (G_{u})$ is a completely reducible unipotent subgroup of
$\mathrm{GL}(n,F)$.
A non-trivial unipotent subgroup of $\mathrm{GL} (n, F)$ is never
completely reducible by Kolchin's theorem.
We deduce $\varpi (G_{u}) = \{Id\}$ (and even $G_{u}=\{ Id \}$ if
$G$ is completely reducible).
Hence $G_{u}$ is contained in $\ker (\varpi)$.
\end{proof}
In \cite{Newman} Newman calculates the maximum derived length $\sigma (n)$ for
completely reducible solvable subgroups of $\mathrm{GL}(n,F)$ where $F$ is any field.
Indeed we have $\sigma (1)=1$, $\sigma (2)=4$, $\sigma (3)=5$, $\sigma (4)=5$ and
$\sigma (5)=5$. This function does not coincide with the Newman function for
$n=3,4,5$ since $\rho (3) =5$, $\rho (4)= 6$ and $\rho (5)=7$.
\begin{defi}
\label{def:psi}
We define
\[ \psi (n) = \max \{ \ell (G) : G \ \mathrm{is} \ \mathrm{a} \ \mathrm{solvable} \
\mathrm{subgroup} \ \mathrm{of} \ \diffh{}{n} \} \]
for $n  \in {\mathbb N}$.
\end{defi}
\begin{pro}
\label{pro:ibound}
Let $G$ be a solvable subgroup of $\diffh{}{n}$.
Then $\ell (G) \leq \sigma (n) + \ell (G_{u})$. In particular we obtain
$\psi (n) \leq \sigma (n) + 2n -1$.
\end{pro}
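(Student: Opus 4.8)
The plan is to use the normal subgroup $G_u$ to split the computation of $\ell (G)$ into a linear (quotient) part and a unipotent (kernel) part. First I would confirm that $G_u$ is normal in $G$. Since $G$ is solvable, its image $j^1 G$ under the homomorphism $j^1 : G \to \mathrm{GL}(n,{\mathbb C})$ is a solvable subgroup of $\mathrm{GL}(n,{\mathbb C})$, so Proposition \ref{pro:cr} shows that $(j^1 G)_u$ is a normal subgroup of $j^1 G$. As $G_u$ is by definition the preimage $(j^1)^{-1}((j^1 G)_u)$, it is normal in $G$, and the homomorphism $j^1$ induces an isomorphism $G/G_u \cong (j^1 G)/(j^1 G)_u$.

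Next I would bound the derived length of the quotient. By Proposition \ref{pro:cr} the group $(j^1 G)/(j^1 G)_u$ is isomorphic to a completely reducible subgroup of $\mathrm{GL}(n,{\mathbb C})$, and it is solvable because $G$ is. By the very definition of $\sigma (n)$ as the sharpest bound for the derived length of completely reducible solvable linear groups, we obtain $\ell (G/G_u) = \ell((j^1 G)/(j^1 G)_u) \leq \sigma (n)$.

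Finally I would invoke the elementary inequality $\ell (G) \leq \ell (G/N) + \ell (N)$, valid for any normal subgroup $N$: since the image of $G^{(k)}$ in $G/N$ equals $(G/N)^{(k)}$, we have $G^{(\ell(G/N))} \subseteq N$, whence $G^{(\ell(G/N)+\ell(N))} \subseteq N^{(\ell(N))} = \{Id\}$. Taking $N = G_u$ gives $\ell (G) \leq \sigma (n) + \ell (G_u)$, which is the first assertion. For the second, I would remark that $G_u$ is a unipotent solvable subgroup of $\diffh{}{n}$, since its first jets lie in $(j^1 G)_u$; hence Theorem \ref{teo:uni} yields $\ell (G_u) \leq 2n-1$. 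Substituting this bound and taking the maximum over all solvable $G$ produces $\psi (n) \leq \sigma (n) + 2n-1$.

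I do not anticipate a substantial obstacle: the statement is a clean combination of the linear reduction in Proposition \ref{pro:cr}, the unipotent bound in Theorem \ref{teo:uni}, and the subadditivity of derived length across a normal subgroup. The only point requiring a little care is checking that $G_u$ is genuinely a solvable unipotent group so that Theorem \ref{teo:uni} applies; this is immediate, since $G_u$ is a subgroup of the solvable group $G$ and its first jet consists of unipotent matrices by construction.
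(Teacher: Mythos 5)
Your proposal is correct and follows essentially the same route as the paper: identify $G/G_{u}$ with $j^{1}G/(j^{1}G)_{u}$, use Proposition \ref{pro:cr} to see this quotient as a completely reducible solvable linear group so that $G^{(\sigma(n))}\subset G_{u}$, and then apply Theorem \ref{teo:uni} to $G_{u}$. The extra details you supply (normality of $G_{u}$ as a preimage, the subadditivity $\ell(G)\leq\ell(G/N)+\ell(N)$) are exactly what the paper leaves implicit.
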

\begin{proof}
The natural map $G/G_{u} \to j^{1} G/ (j^{1} G)_{u}$ is an isomorphism.
Therefore $G/G_{u}$ is isomorphic to a completely reducible subgroup of
$\mathrm{GL}(n,{\mathbb C})$ by Proposition \ref{pro:cr}.
We deduce $G^{(\sigma(n))} \subset G_{u}$ by definition of $\sigma$.
Thus $\ell (G)$ is less or equal than $\sigma (n) + \ell (G_{u})$.
Since $\ell (G_{u}) \leq 2n-1$ by Theorem \ref{teo:uni}, we obtain
$\ell (G) \leq \sigma (n) + 2n -1$.
\end{proof}
The function $\sigma (n) + 2n -1$ is equal to $2$, $7$, $10$, $12$ and $14$
for $n=1,2,3,4,5$ respectively. Let us remind the reader that we want to
show $\psi (2)=5$, $\psi (3)=7$, $\psi (4)=9$ and $\psi (5)=11$.

In the proof of the Main Theorem
we need to understand
the group $G^{(\sigma(n)-1)}/(G^{(\sigma(n-1))})_{u}$ for a solvable
subgroup $G$ of $\mathrm{GL}(n, {\mathbb C})$.
\begin{lem}
\label{lem:fag}
Let $G$ be a completely reducible solvable subgroup of $\mathrm{GL}(n,F)$
where $F$ is a field of characteristic $0$ and $n>1$.
Then $G^{(\sigma (n)-1)}$ is a finite abelian group.
\end{lem}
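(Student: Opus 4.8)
The statement that $G^{(\sigma(n)-1)}$ is abelian is immediate: since $G$ is completely reducible and solvable, the definition of $\sigma$ gives $\ell(G)\le\sigma(n)$, so $G^{(\sigma(n))}=(G^{(\sigma(n)-1)})^{(1)}$ is trivial. The real content is \emph{finiteness}, and that is what the plan addresses. First I would reduce to an algebraic setting. Extending scalars to $\overline{F}$ and replacing $G$ by its Zariski closure $\overline{G}$ in $\mathrm{GL}(n,\overline{F})$ preserves complete reducibility (characteristic $0$) and solvability, and satisfies $G^{(\sigma(n)-1)}\subseteq\overline{G}^{(\sigma(n)-1)}$; since a subgroup of a finite group is finite, it suffices to prove $\overline{G}^{(\sigma(n)-1)}$ is finite, so I may assume $G=\overline{G}$ is a linear algebraic group over $F=\overline{F}$. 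Its identity component is connected solvable, hence of the form $G^{0}=T\ltimes U$ with $T$ a torus and $U$ the unipotent radical; but $U$ is a normal unipotent subgroup acting on the semisimple module $F^{n}$, so by Kolchin's theorem (Theorem \ref{teo:Kolchin}) $U$ acts trivially and by faithfulness $U=\{Id\}$. Thus $G^{0}=T$ is a torus and $G/T$ is finite.

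Next I would isolate the finite part of the derived series. Let $K=C_{G}(T)$; since $T$ is normal in $G$ the subgroup $K$ is normal and contains $T$, and $T\subseteq Z(K)$. As $K/T$ is finite, $K/Z(K)$ is finite, so Schur's theorem gives that $[K,K]$ is finite. Setting $d=\ell(G/K)$ we obtain $G^{(d)}\subseteq K$, whence $G^{(d+1)}\subseteq[K,K]$ is finite, and consequently $G^{(j)}$ is finite for every $j\ge d+1$. It therefore remains to prove $d\le\sigma(n)-2$.

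To bound $d$ I would use the weight decomposition $F^{n}=\bigoplus_{\chi\in W}V_{\chi}$ of the faithful $T$-module $F^{n}$, where $W$ is the finite set of weights that occur, so that $m:=|W|\le n$. The group $G$ acts on the character lattice $X(T)$ and permutes $W$. Because $T$ acts faithfully, the weights in $W$ span $X(T)\otimes\mathbb{Q}$, so any element of $G$ fixing every weight centralizes $T$; hence the kernel of the permutation action on $W$ is exactly $K$ and $G/K$ embeds in $\mathrm{Sym}(W)\cong S_{m}$. Thus $G/K$ is a solvable subgroup of $S_{m}$ and $d=\ell(G/K)\le\lambda(m)\le\lambda(n)$, where $\lambda(k)$ denotes the maximal derived length of a solvable subgroup of $S_{k}$. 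Combining this with $\lambda(n)\le\sigma(n)-2$ yields $G^{(\sigma(n)-1)}\subseteq G^{(d+1)}$, which is finite, and hence finite abelian.

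The hard part is precisely this last numerical comparison $\lambda(n)\le\sigma(n)-2$ between the maximal derived length of solvable permutation groups and that of completely reducible solvable linear groups, which I would extract from Newman's computation of $\sigma$ \cite{Newman}; for the range $n\le 5$ actually needed it is checked directly from $\lambda(2),\dots,\lambda(5)=1,2,3,3$ against $\sigma(2),\dots,\sigma(5)=4,5,5,5$. Everything else — the reduction to the torus case, the application of Schur's theorem, and the embedding $G/K\hookrightarrow S_{m}$ — is routine once this inequality is in place.
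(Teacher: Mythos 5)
Your proof is correct, but it follows a genuinely different route from the paper's. The paper argues by induction on $n$: it decomposes $F^{n}=W_{1}\oplus\hdots\oplus W_{k}$ into common eigenspaces of the normal abelian subgroup $G^{(\sigma(n)-1)}$ (whose elements are diagonalizable because $G$ is completely reducible), notes that an irreducible $G$ permutes these blocks transitively so that $n=pk$ with $p=\dim W_{j}$, and then splits into three cases ($p=1$; $p>1$ and $k>1$; $k=1$), using two of Newman's inequalities, namely $\tau(n)\leq\sigma(n)-2$ and $\tau(k)+\sigma(p)\leq\sigma(pk)$, the last case producing scalar matrices that are $n$-th roots of unity. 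You instead pass to the Zariski closure, observe that its identity component is a torus $T$ (the unipotent radical being killed by complete reducibility, which is exactly Proposition \ref{pro:cr} of the paper and which you should cite rather than invoking Kolchin alone, since Kolchin only gives a common fixed vector and one still needs normality plus semisimplicity of the module to make $U$ act trivially), apply Schur's theorem to $K=C_{G}(T)$ to get $[K,K]$ finite, and bound $\ell(G/K)$ by the derived length $\tau(m)$ of a solvable permutation group via the action on the torus weights. Your route avoids the induction, the case analysis and the inequality $\tau(k)+\sigma(p)\leq\sigma(pk)$, at the price of importing Schur's theorem and some algebraic-group structure theory; the paper's is longer but stays entirely within elementary linear group theory plus Newman's arithmetic.

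Two caveats. First, both arguments ultimately rest on the very same numerical fact $\tau(n)\leq\sigma(n)-2$, which the paper does prove for all $n$ (via $\sigma(n)=7+\tau(n/8)$ or $8+\tau(n/8)$ from Newman's Theorem C, together with $5+\tau(n/8)=\tau(9n/8)$ and the monotonicity of $\tau$); since the lemma is stated for all $n>1$, your direct verification for $n\leq 5$ does not suffice as written and you should import that argument rather than leave it as something to be ``extracted''. Second, your identification of the kernel of the permutation action on the weights with $C_{G}(T)$ is fine, but the clean justification is that faithfulness makes the common kernel of the occurring weights trivial, so $g^{-1}tg\,t^{-1}$ lies in that common kernel whenever $g$ fixes every weight; it is worth saying this explicitly.
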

\begin{proof}
We can suppose that $F$ is algebraically closed.
Indeed $G_{u}$ is trivial by Proposition \ref{pro:cr}
and $G = G/G_{u}$ is isomorphic to a completely reducible
subgroup of $\mathrm{GL}(n,\overline{F})$ by Proposition \ref{pro:cr}.
More generally given fields $F \subset K$
any completely reducible subgroup of $\mathrm{GL}(n,F)$
is isomorphic to a completely reducible subgroup of $\mathrm{GL}(n,K)$
(cf. \cite{Dixon-solvable}[remark after Lemma 2]).

Let us show the lemma by induction on $n$.
Since $\sigma$ is an increasing function, it suffices to prove the result for
an irreducible group $G$.

We denote $\sigma = \sigma (n)$.
The group $G^{(\sigma -1)}$ is commutative.
Let us remark that a solvable subgroup $H$ of $\mathrm{GL}(n,F)$
is completely reducible if and only if all its elements are diagonalizable
(cf. \cite{Wehrfritz}[Theorem 7.6, p. 94]).
In particular every element of $G$ is diagonalizable.
We denote by $W_{\lambda}(A)= {\rm Ker}(A-\lambda Id)$ the
eigenspace of $\lambda$. If $A$ and $B$ commute then we have
$B(W_{\lambda}(A))=W_{\lambda}(A)$.
The vector space $F^{n}$ is of the form
$W_{1} \bigoplus \hdots \bigoplus W_{k}$ where every
$W_{j}$ is defined as follows: for any $A \in  G^{(\sigma -1)}$ we choose
an eigenvalue $\lambda(A,j)$ of $A$; we define
$W_{j}=\cap_{A \in G^{(\sigma - 1)}} W_{\lambda(A,j)}(A)$. Of course we remove
all the trivial spaces $W_{j}=\{0\}$.
The decomposition is preserved by $G$ since $G^{(\sigma -1)}$ is normal in $G$.
Moreover since $G$ is irreducible, given $1 \leq j \leq k$ there
exists $A_{j} \in G$ such that $A_{j} (W_{1})=W_{j}$.
In particular there exists $p \in  {\mathbb N}$ such that $\dim (W_{j})=p$
for any $1 \leq j \leq k$.

We have $A (W_{j}) = W_{j}$ for all $A \in G^{(\tau (k))}$ and $1 \leq j \leq n$
where $\tau (k)$ is the sharpest upper bound for the derived length of solvable
subgroups of the group $S_{k}$ of permutations of $k$ elements.
The function $\tau$ is extended to the rational positive numbers in \cite{Newman}.
The properties of the function $\tau$ will be used below.
The subrepresentation $G^{(\tau (k))}_{|W_{j}}$ obtained by restricting
$G^{(\tau (k))}$ to $W_{j}$ is also completely reducible
since all its elements are diagonalizable.

Suppose $p=1$. We have $n=k$. We obtain $G^{(\tau (k)+1)}_{|W_{j}}=\{Id\}$
for any $1 \leq j \leq k$.
Hence it suffices to prove $\tau (n) + 1 \leq \sigma (n) -1$ for any $n \geq 2$.
We have $\sigma (n) = 7 + \tau (n/8)$ or $\sigma (n) = 8 + \tau (n/8)$ \cite{Newman}[Theorem C].
It suffices to prove $\tau (n) \leq 5+ \tau (n/8)$ for any $n \in {\mathbb N}$.
We obtain $5+ \tau (n/8) = \tau (9n/8)$ \cite{Newman}[Lemma 1(a)(5)].
Now $\tau (n) \leq \tau (9n/8)$ is a consequence of the increasing nature of $\tau$.
We suppose $p>1$ from now on.

Suppose $k>1$. We obtain that
$G^{(\tau (k)+\sigma (p) -1)}_{|W_{j}}$ is a finite abelian group
for any $1 \leq j \leq k$ by induction hypothesis.
Therefore $G^{(\tau (k)+\sigma (p) -1)}$ is a finite abelian group.
The property
$\tau (k) + \sigma (p) \leq  \sigma (pk)$ \cite{Newman}[Lemma 1(c)]
implies that $G^{(\sigma -1)}$ is a finite abelian group.

Suppose $k=1$. We obtain that every element $A$ of $G^{(\sigma -1)}$
is of the form $\lambda_{A} Id$ by construction of
$W_{1} \bigoplus \hdots \bigoplus W_{k}$.
If a commutator $[A,B]$ is of the form $\lambda Id$ then
$ABA^{-1} = \lambda B$. The eigenvalues of $B$ and $\lambda B$ coincide;
thus $\lambda^{n} =1$. Since $\sigma \geq 4$ we deduce that
$G^{(\sigma -1)}$ is a subgroup of $\{ \lambda Id: \lambda^{n}=1 \}$.

The induction hypothesis is only used to deal with the case $p>1$, $k>1$.
Since then $pk \geq 4$ the result holds true for the initial case $n=2$.
\end{proof}
\begin{cor}
\label{cor:lin2}
Consider a solvable subgroup
$G$ of $\mathrm{GL} (2,F)$ where $F$ is a field of characteristic $0$. Then we have
$G^{(3)} = \{ Id \}$ or $G^{(3)} = \{ Id, - Id \}$.
\end{cor}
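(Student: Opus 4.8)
The plan is to split into two cases according to whether $F^{2}$ is irreducible as a $G$-module. Since $G^{(3)}$ is a subgroup of $\mathrm{GL}(2,F)$ and $\{Id,-Id\}$ has order $2$, it suffices to prove the containment $G^{(3)} \subseteq \{Id,-Id\}$; the dichotomy $G^{(3)}=\{Id\}$ or $G^{(3)}=\{Id,-Id\}$ then follows automatically.

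First I would treat the reducible case, i.e. when $G$ preserves a line $V_{1} \subset F^{2}$. Choosing a basis adapted to the flag $\{0\} \subset V_{1} \subset F^{2}$, the group $G$ becomes a group of upper triangular matrices. The map sending an upper triangular matrix to its pair of diagonal entries is a homomorphism $G \to F^{*} \times F^{*}$ whose target is abelian and whose kernel consists of the unipotent matrices $\left(\begin{smallmatrix} 1 & * \\ 0 & 1\end{smallmatrix}\right)$, which form an abelian group isomorphic to $(F,+)$. Hence $G^{(1)}$ lies in this abelian kernel, so $G^{(2)} = \{Id\}$ and a fortiori $G^{(3)} = \{Id\}$.

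It remains to handle the case where $F^{2}$ is an irreducible $G$-module. An irreducible module is completely reducible, so $G$ is a completely reducible solvable subgroup of $\mathrm{GL}(2,F)$ and Lemma \ref{lem:fag} applies (here $n=2>1$): the group $G^{(3)} = G^{(\sigma(2)-1)}$ is finite abelian. The main obstacle is to upgrade this to the sharp containment $G^{(3)} \subseteq \{Id,-Id\}$, which is not in the statement of Lemma \ref{lem:fag} but is already established inside its proof. Indeed, in the notation of that proof one has $pk = n = 2$, so there are only two possibilities. If $p=1$ and $k=2$, then $G^{(\tau(2)+1)}$ acts trivially on each $W_{j}$ and hence $G^{(2)}=\{Id\}$, giving $G^{(3)}=\{Id\}$ (note $\tau(2)+1 = 2 \le 3$). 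If $p=2$ and $k=1$, then every element of $G^{(3)}$ is scalar and the relation $\lambda^{2}=1$ forces $G^{(3)} \subseteq \{\lambda\, Id : \lambda^{2}=1\} = \{Id,-Id\}$. I would also use the field-extension reduction at the start of the proof of Lemma \ref{lem:fag} to assume $F$ algebraically closed, observing that the scalars $\pm Id$ are unaffected by passing between $F$ and its algebraic closure.

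Combining the two cases yields $G^{(3)} \subseteq \{Id,-Id\}$ in every situation, and since $G^{(3)}$ is a subgroup we conclude $G^{(3)} = \{Id\}$ or $G^{(3)} = \{Id,-Id\}$. The only nonelementary input is the precise scalar bound in the irreducible case; the reducible case and the final bookkeeping are routine. I would remark that the value $\{Id,-Id\}$ is genuinely attained — the binary octahedral group realizes an irreducible solvable subgroup of $\mathrm{GL}(2,\mathbb{C})$ of derived length $4$ with third derived group exactly $\{Id,-Id\}$ — which is consistent with $\sigma(2)=4$.
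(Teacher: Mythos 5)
Your argument is correct and follows essentially the same route as the paper's proof: reduce to $F$ algebraically closed, dispose of the reducible case by triangularization (where already $G^{(2)}=\{Id\}$), and in the irreducible case read off $G^{(3)}\subseteq\{Id,-Id\}$ from the two subcases $p=1,k=2$ and $p=2,k=1$ inside the proof of Lemma \ref{lem:fag}. Your extra details (the explicit diagonal-entries homomorphism, and the binary octahedral example showing sharpness, which the paper records after Proposition \ref{pro:lg25}) are harmless elaborations of the same argument.
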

\begin{proof}
We can suppose that $F$ is algebraically closed. 
Moreover we can suppose that $G$ is irreducible. Otherwise $G$ is conjugated to a group
of upper triangular matrices and $G^{(2)}=\{Id\}$.

Consider the cases in the proof of Lemma \ref{lem:fag}.
If $p=1$ and $k=2$ then we obtain again $G^{(2)}=\{Id\}$.
If $p=2$ and $k=1$ then $G^{(3)}$ is a subgroup of $\{-Id, Id\}$.
\end{proof}
\begin{cor}
\label{cor:fag}
Let $G$ be a solvable subgroup of $\diffh{}{n}$ with $n>1$.
Then $\overline{G}^{(\sigma (n)-1)}/(\overline{G}^{(\sigma (n)-1)})_{u}$
is a finite abelian group.
\end{cor}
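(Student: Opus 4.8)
The plan is to reduce the statement to its purely linear counterpart, Lemma \ref{lem:fag}, by pushing everything through the linear part map $j^{1}$, exploiting that both the unipotent structure and the derived series of a group of formal diffeomorphisms are visible on the first jet.

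Set $H = \overline{G}^{(\sigma(n)-1)}$. Being a subgroup of the solvable group $\overline{G}^{(0)}$, the group $H$ is itself solvable, so its unipotent part $H_{u}$ is normal and, exactly as for any subgroup of $\diffh{}{n}$, the quotient $H/H_{u}$ is isomorphic to $j^{1}H/(j^{1}H)_{u}$ (the kernel of $j^{1}$ consists of unipotent elements, and $H_{u}$ is the $j^{1}$-preimage of $(j^{1}H)_{u}$). Thus it suffices to show that $j^{1}H/(j^{1}H)_{u}$ is finite abelian. The key identification of $j^{1}H$ comes from Proposition \ref{pro:proa}: applied with $j=\sigma(n)-1$ and $k=1$, it gives $j^{1}H = G_{1}^{(\sigma(n)-1)}$, where $G_{1}$ is the smallest algebraic subgroup of $\mathrm{GL}(n,{\mathbb C})$ containing $j^{1}G$, i.e. the Zariski closure of $j^{1}G$.

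I would then work entirely in the linear setting with $M := G_{1}$, which is a solvable algebraic subgroup of $\mathrm{GL}(n,{\mathbb C})$ (the Zariski closure of the solvable group $j^{1}G$ is solvable). By Proposition \ref{pro:cr} the kernel of the completely-reducible-quotient map $\varpi$ equals $M_{u}$, so $\varpi$ identifies $M/M_{u}$ with the completely reducible group $\varpi(M)$. Since $\varpi$ is a homomorphism we have $\varpi(M)^{(\sigma(n)-1)} = \varpi\bigl(M^{(\sigma(n)-1)}\bigr)$, and because $(M^{(\sigma(n)-1)})_{u} = M^{(\sigma(n)-1)} \cap M_{u} = M^{(\sigma(n)-1)} \cap \ker\varpi$, restricting $\varpi$ to $M^{(\sigma(n)-1)}$ yields
\[ M^{(\sigma(n)-1)}/(M^{(\sigma(n)-1)})_{u} \;\cong\; \varpi(M)^{(\sigma(n)-1)}. \]
Now $\varpi(M)$ is a completely reducible solvable subgroup of $\mathrm{GL}(n,{\mathbb C})$ with $n>1$, so Lemma \ref{lem:fag} shows that $\varpi(M)^{(\sigma(n)-1)}$ is finite abelian. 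Combining this with $j^{1}H = M^{(\sigma(n)-1)}$ from the previous paragraph gives that $H/H_{u}$ is finite abelian, as desired.

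The corollary is essentially a transcription of Lemma \ref{lem:fag}, so there is no genuine combinatorial difficulty here: all the delicate estimates involving Newman's function $\tau$ are already absorbed into that lemma. The only points needing care are bookkeeping ones. The first is the identification $j^{1}\overline{G}^{(\sigma(n)-1)} = G_{1}^{(\sigma(n)-1)}$, where Proposition \ref{pro:proa} is indispensable, since the linear part of the closed derived group is the derived group of the \emph{Zariski closure} $G_{1}$ rather than of $j^{1}G$ itself. The second, which I expect to be the subtlest point, is the equality $(M^{(\sigma(n)-1)})_{u} = M^{(\sigma(n)-1)} \cap M_{u}$ needed for the isomorphism above; this follows immediately once $M_{u}$ is known to be the normal subgroup $\ker\varpi$ by Proposition \ref{pro:cr}, since then the unipotent elements of $M^{(\sigma(n)-1)}$ are precisely those lying in $\ker\varpi$.
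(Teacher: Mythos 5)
Your proposal is correct and follows essentially the same route as the paper: both reduce the statement to the linear part via the isomorphism $H/H_{u} \cong j^{1}H/(j^{1}H)_{u}$, identify that linear group with the $(\sigma(n)-1)$-derived group of the Zariski closure of $j^{1}G$ (the paper via Lemma \ref{lem:normalg} and the Krull closure, you via Proposition \ref{pro:proa}), and then apply Proposition \ref{pro:cr} together with Lemma \ref{lem:fag} to the associated completely reducible group. The only difference is bookkeeping, not substance.
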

\begin{proof}
We denote $L = \overline{G}^{(0)}$ and $T=L^{(\sigma (n)-1)}$. Notice that
$\overline{G}^{(\sigma (n)-1)}$ is the Krull closure $\overline{T}$ of $T$ by
Lemma \ref{lem:normalg}.

The natural map
$L/L_{u} \to j^{1} L/ (j^{1}  L)_{u}$
is an isomorphism of groups.
We deduce that $T/T_{u}$ is a finite abelian group
by Proposition \ref{pro:cr} and Lemma \ref{lem:fag}.
The groups $T/T_{u}$ and $j^{1}  T/ (j^{1} T)_{u}$ are isomorphic.
We also have that $\overline{T}/(\overline{T})_{u}$ is isomorphic to
$j^{1} T/ (j^{1} T)_{u}$ by construction of $\overline{T}$.
Therefore $\overline{G}^{(\sigma (n)-1)}/(\overline{G}^{(\sigma (n)-1)})_{u}$
is a finite abelian group.
\end{proof}
%
\section{Codimension one foliations}
%
%
Our point of view is based on studying the action of $G$
on Lie algebras and fields of first integrals.
Proposition \ref{pro:cod1} implies that if the first integrals
define a codimension $1$ foliation
then the action of the group on first integrals
is of derived length at most $3$.
\begin{pro}
\label{pro:cod1aux}
Let $X_{1},\hdots, X_{q} \in \hat{\mathfrak X} \cn{n}$.
Denote
\[ {\mathcal M} = \{ g \in \hat{K}_{n} : X_{j}(g)=0 \ \forall j \in \{1, \hdots, q\}\} . \]
Suppose ${\mathcal M}$ defines a codimension $1$ formal foliation, i.e.
there exists $f_{1} \in {\mathcal M} \setminus {\mathbb C}$ and
$df \wedge dg \equiv 0$ for all $f,g \in {\mathcal M}$. Then
\begin{itemize}
\item If ${\mathcal M} \cap \hat{\mathcal O}_{n} \neq {\mathbb C}$ then
there exists $f_{0} \in \hat{\mathfrak m}$ such that
${\mathcal M} = {\mathbb C}[[f_{0}]][f_{0}^{-1}]$ and
${\mathcal M} \cap \hat{\mathcal O}_{n}= {\mathbb C}[[f_{0}]]$.
\item If ${\mathcal M} \cap \hat{\mathcal O}_{n} = {\mathbb C}$ then
there exists $f_{0} \in {\mathcal M}$ such that
${\mathcal M} = {\mathbb C} (f_{0})$.
\end{itemize}
\end{pro}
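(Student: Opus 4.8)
The plan is to extract from the codimension one hypothesis that $\mathcal{M}$ has transcendence degree $1$ over $\mathbb{C}$, and then to describe $\mathcal{M}$ through the ring $R = \mathcal{M} \cap \hat{\mathcal{O}}_{n}$ of its formal power series elements, splitting into the two cases according to whether $R=\mathbb{C}$. Indeed the hypothesis $df\wedge dg\equiv 0$ for all $f,g\in\mathcal{M}$ together with $f_{1}\in\mathcal{M}\setminus\mathbb{C}$ says exactly that any two elements of $\mathcal{M}$ are functionally dependent, i.e. $\mathrm{trdeg}(\mathcal{M}/\mathbb{C})=1$. This is the property I would exploit throughout.

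First I would record the elementary structure of $R$. Since every $X_{j}$ is a continuous derivation of $\hat{\mathcal{O}}_{n}$ for the Krull topology, $R=\cap_{j}\ker X_{j}$ is a Krull-closed $\mathbb{C}$-subalgebra. It is local: if $g\in R$ and $g(0)\neq 0$ then $g$ is a unit of $\hat{\mathcal{O}}_{n}$ and $X_{j}(1/g)=-X_{j}(g)/g^{2}=0$, so $1/g\in R$; hence the nonunits $R\cap\hat{\mathfrak{m}}$ form the unique maximal ideal and the residue field is $\mathbb{C}$. Finally $R$ is integrally closed in $\hat{\mathcal{O}}_{n}$: if $h\in\hat{\mathcal{O}}_{n}$ satisfies a monic minimal equation $P(h)=0$ with coefficients in $R$, then applying $X_{j}$ gives $P'(h)X_{j}(h)=0$, and $P'(h)\neq 0$ in characteristic $0$ forces $X_{j}(h)=0$, i.e. $h\in R$.

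For Case 1 ($R\neq\mathbb{C}$) the goal is to produce a primitive generator $f_{0}$. I would invoke the formal analogue of the results of Mattei--Moussu \cite{MaMo:Aen} and Cerveau--Mattei \cite{Ce-Ma:Ast}: since all elements of $R$ are pairwise functionally dependent, they are simultaneously formal series in a single primitive $f_{0}\in\hat{\mathfrak{m}}$. Concretely, choosing $f_{0}\in R\cap\hat{\mathfrak{m}}$ of minimal order with primitive leading form and comparing leading forms (two homogeneous polynomials with vanishing wedge of differentials are proportional powers of a common form) lets one subtract successive powers of $f_{0}$ and, using Krull-closedness of $R$, write every element of $R$ as a series in $f_{0}$ convergent in the Krull topology. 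Together with the reverse inclusion $\mathbb{C}[[f_{0}]]\subseteq R$, this yields $R=\mathbb{C}[[f_{0}]]$, i.e. $\mathcal{M}\cap\hat{\mathcal{O}}_{n}=\mathbb{C}[[f_{0}]]$. To finish I must show $\mathcal{M}=\mathrm{Frac}(R)=\mathbb{C}[[f_{0}]][f_{0}^{-1}]$. Given $h\in\mathcal{M}$, write $h=p/q$ with $p,q\in\hat{\mathcal{O}}_{n}$ coprime; from $X_{j}(h)=0$ and coprimality one gets $X_{j}(p)=b_{j}p$ and $X_{j}(q)=b_{j}q$ for a common $b_{j}\in\hat{\mathcal{O}}_{n}$, so the irreducible factors of $p$ and $q$ cut out invariant hypersurfaces. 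Each such factor divides a translate $f_{0}-c$, because $f_{0}$ is constant on the corresponding leaf; hence $p$ and $q$ are, up to unit first integrals (which lie in $\mathbb{C}[[f_{0}]]$), products of factors of the $f_{0}-c$, giving $h\in\mathrm{Frac}(\mathbb{C}[[f_{0}]])$. In Case 2 ($R=\mathbb{C}$) a generator must be genuinely meromorphic: I would pick $f_{0}\in\mathcal{M}\setminus\mathbb{C}$, necessarily of the form $p/q$ with $q$ a nonunit, and again use functional dependence of each $h\in\mathcal{M}$ with $f_{0}$. Since $f_{0}$ has poles, no nonrational substitution $\Phi(f_{0})$ stays in $\hat{K}_{n}$, so the only available first integrals are rational functions of $f_{0}$, and the formal version of Cerveau--Mattei \cite{Ce-Ma:Ast} gives $\mathcal{M}=\mathbb{C}(f_{0})$.

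The main obstacle is the functional-dependence input in Case 1 --- that pairwise functionally dependent formal power series are simultaneously series in one primitive element, and that invariant irreducible factors divide translates of $f_{0}$. This is precisely the formal counterpart of the Mattei--Moussu finiteness/holonomy theorem, and carrying it out rigorously (via a formal reduction of singularities, or a careful leading-form induction) is where the real work lies; the commutative-algebra reductions and Case 2 are comparatively routine once this is established.
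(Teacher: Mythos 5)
Your overall shape (split on whether ${\mathcal M}\cap\hat{\mathcal O}_{n}={\mathbb C}$, then produce a generator $f_{0}$) matches the paper, but the core of both cases is missing. The decisive problem is circularity: this proposition \emph{is} the formal analogue of Mattei--Moussu and Cerveau--Mattei --- the cited results only cover convergent first integrals of analytic foliations --- so ``invoke the formal analogue of \cite{MaMo:Aen}, \cite{Ce-Ma:Ast}'' and ``the formal version of Cerveau--Mattei gives ${\mathcal M}={\mathbb C}(f_{0})$'' assume exactly what has to be proved. In the case $R={\mathbb C}$ you give no argument for why an $h\in{\mathcal M}$ satisfying only $dh\wedge df_{0}=0$ must be a \emph{rational} function of $f_{0}$: functional dependence does not give this (nor even algebraic dependence over ${\mathbb C}$ in the formal category --- $f_{0}$ and $e^{f_{0}}-1$ satisfy $df\wedge dg=0$ yet are algebraically independent, so your opening claim that the hypothesis ``says exactly'' $\mathrm{trdeg}({\mathcal M}/{\mathbb C})=1$ is false; this is precisely why the first case produces ${\mathbb C}[[f_{0}]][f_{0}^{-1}]$ rather than a field of transcendence degree one). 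The paper's proof of this case is the real content: restrict to a generic $2$-plane, desingularize $df=0$, show that the pencil of formal invariant curves $a-\lambda b=0$ forces a dicritical component $D$ of the exceptional divisor, obtain a field embedding $\tau:{\mathcal M}\hookrightarrow{\mathbb C}(z)$ by restriction to $D$ (which simultaneously proves $\tilde{\mathcal M}={\mathbb C}$), and conclude by L\"{u}roth's theorem. None of that machinery appears in your proposal.

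The concrete sketch you do give for the case $R\neq{\mathbb C}$ also has a gap: an element of $R\cap\hat{\mathfrak m}$ with \emph{primitive} leading form need not exist, so your induction cannot start. For instance if $R={\mathbb C}[[x^{2}+y^{3}]]$ every leading form is a constant times a power of $x^{2}$, and comparing leading forms alone does not show that the order of a general $g\in R$ is a multiple of the minimal order. The paper avoids leading forms entirely: it first establishes (as a byproduct of the pure-meromorphic analysis) that when $\tilde{\mathcal M}\neq{\mathbb C}$ every $h\in{\mathcal M}$ satisfies $h\in\hat{\mathcal O}_{n}$ or $1/h\in\hat{\mathcal O}_{n}$, then takes $f_{0}$ of minimal vanishing order and runs a Euclidean-division argument on orders: writing $\nu(g-c_{0})=q\,\nu(f_{0})+r$, the quotient $(g-c_{0})/f_{0}^{q}$ lies in ${\mathcal M}$, is a power series of order $r<\nu(f_{0})$, hence $r=0$ by minimality; iterating gives $\tilde{\mathcal M}={\mathbb C}[[f_{0}]]$ and then ${\mathcal M}={\mathbb C}[[f_{0}]][f_{0}^{-1}]$ immediately from the dichotomy, with no need for your invariant-hypersurface argument about factors of $f_{0}-c$. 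Your preliminary observations on $R$ (locality, integral closedness) are correct but are not what the proof needs.
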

\begin{pro}
\label{pro:cod1}
Consider the hypotheses in Proposition \ref{pro:cod1aux}.
Suppose that
$G$ is a solvable pro-algebraic subgroup of $\diffh{}{n}$ such that 
${\mathcal M} \circ \varphi = {\mathcal M}$ for any $\varphi \in G$.
Then $f \circ \varphi = f$ for all $f \in {\mathcal M}$ and
$\varphi \in \overline{G}^{(3)}$.
Moreover we obtain
$f \circ \varphi = f$ for all $f \in {\mathcal M}$ and
$\varphi \in \overline{G}^{(2)}$ if
${\mathcal M} \cap \hat{\mathcal O}_{n}$ contains non-constant elements.
\end{pro}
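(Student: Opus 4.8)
The plan is to linearize the action of $G$ on the field ${\mathcal M}$. Since ${\mathcal M} \circ \varphi = {\mathcal M}$ for every $\varphi \in G$, the assignment $\Phi(\varphi)\colon g \mapsto g \circ \varphi^{-1}$ is a ${\mathbb C}$-algebra automorphism of ${\mathcal M}$ (it is a ring homomorphism fixing ${\mathbb C}$, bijective with inverse $g \mapsto g \circ \varphi$), and $\Phi\colon G \to \mathrm{Aut}_{\mathbb C}({\mathcal M})$ is a group homomorphism. Its kernel
\[ H = \{ \varphi \in G : f \circ \varphi = f \ \forall f \in {\mathcal M} \} \]
is a normal subgroup of $G$, and $H$ is closed in the Krull topology: for each fixed $f \in {\mathcal M}$ the condition $f \circ \varphi = f$ is determined, jet by jet, by finitely many coefficients of $\varphi$, hence is Krull-closed, and an intersection of closed sets is closed. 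Because $G$ is pro-algebraic we have $\overline{G}^{(0)}=G$, so Lemma \ref{lem:normalg} identifies $\overline{G}^{(j)}$ with the Krull closure of $G^{(j)}$. Thus it suffices to prove $G^{(j)} \subseteq H$ for the appropriate $j$: then $\overline{G}^{(j)} = \overline{G^{(j)}} \subseteq \overline{H} = H$, which is exactly the assertion $f \circ \varphi = f$ for all $f \in {\mathcal M}$ and $\varphi \in \overline{G}^{(j)}$. Since $\Phi(G)$ is a homomorphic image of the solvable group $G$, everything reduces to bounding $\ell(\Phi(G))$ using the two structural descriptions of ${\mathcal M}$ furnished by Proposition \ref{pro:cod1aux}.

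Consider first the case ${\mathcal M} \cap \hat{\mathcal O}_{n} \neq {\mathbb C}$, which yields the sharper conclusion. By Proposition \ref{pro:cod1aux} we have ${\mathcal M} = {\mathbb C}[[f_{0}]][f_{0}^{-1}]$ and ${\mathcal M} \cap \hat{\mathcal O}_{n} = {\mathbb C}[[f_{0}]]$ with $f_{0} \in \hat{\mathfrak m}$. For $\varphi \in G$ and $g \in \hat{\mathcal O}_{n}$ we have $g \circ \varphi \in \hat{\mathcal O}_{n}$, so $\Phi(\varphi)$ preserves ${\mathcal M} \cap \hat{\mathcal O}_{n} = {\mathbb C}[[f_{0}]]$ and restricts to a ${\mathbb C}$-algebra automorphism of ${\mathbb C}[[f_{0}]]$; equivalently $f_{0} \circ \varphi^{-1} = c_{1} f_{0} + c_{2} f_{0}^{2} + \cdots$ with $c_{1} \neq 0$. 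Hence $\Phi$ realizes $\Phi(G)$ as a subgroup of $\diffh{}{1}$ acting in the variable $f_{0}$. As $\Phi(G)$ is solvable, $\ell(\Phi(G)) \leq \psi(1) = 2$, since every solvable subgroup of $\diffh{}{1}$ is metabelian (cf. \cite{Loray5, Ilya-Yako}). Therefore $\Phi(G)^{(2)} = \{Id\}$, so $G^{(2)} \subseteq H$, and by the reduction above $f \circ \varphi = f$ for all $f \in {\mathcal M}$ and $\varphi \in \overline{G}^{(2)}$.

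In the remaining case ${\mathcal M} \cap \hat{\mathcal O}_{n} = {\mathbb C}$, Proposition \ref{pro:cod1aux} gives ${\mathcal M} = {\mathbb C}(f_{0})$, the rational function field in one variable. Its group of ${\mathbb C}$-algebra automorphisms is $\mathrm{PGL}(2,{\mathbb C})$, acting on $f_{0}$ by M\"{o}bius transformations, so $\Phi$ realizes $\Phi(G)$ as a solvable subgroup of $\mathrm{PGL}(2,{\mathbb C})$. To bound its derived length I lift: let $\tilde{H} \leq \mathrm{GL}(2,{\mathbb C})$ be the preimage of $\Phi(G)$ under the projection $\pi\colon \mathrm{GL}(2,{\mathbb C}) \to \mathrm{PGL}(2,{\mathbb C})$. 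Then $\tilde{H}$ is solvable, being a central extension of $\Phi(G)$ by ${\mathbb C}^{*} Id$, and Corollary \ref{cor:lin2} gives $\tilde{H}^{(3)} \subseteq \{Id, -Id\} \subseteq \ker \pi$. Since $\pi(\tilde{H}^{(3)}) = \Phi(G)^{(3)}$, we obtain $\Phi(G)^{(3)} = \{Id\}$, hence $G^{(3)} \subseteq H$ and $f \circ \varphi = f$ for all $f \in {\mathcal M}$ and $\varphi \in \overline{G}^{(3)}$.

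I expect the crux to be the second case: the point is that the $G$-action on a transcendence-degree-one field of first integrals need not stay within local diffeomorphisms but is governed by $\mathrm{PGL}(2,{\mathbb C})$, and one must transfer the derived-length bound from $\mathrm{GL}(2,{\mathbb C})$ (Corollary \ref{cor:lin2}) across the central quotient. By contrast, the reduction of the whole statement to bounding $\ell(\Phi(G))$, and the passage from $G^{(j)}$ to $\overline{G}^{(j)}$ through the Krull-closedness of $H$, are routine once Proposition \ref{pro:cod1aux} and Lemma \ref{lem:normalg} are in hand.
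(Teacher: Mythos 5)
Your proof is correct and follows essentially the same route as the paper: both induce from the $G$-action on ${\mathcal M}$ a homomorphism onto a group of one-variable transformations of the generator $f_{0}$ given by Proposition \ref{pro:cod1aux}, landing in $\diffh{}{}$ (metabelian) when ${\mathcal M}\cap\hat{\mathcal O}_{n}\neq{\mathbb C}$ and in $\mathrm{Aut}({\bf CP}^{1})=\mathrm{PGL}(2,{\mathbb C})$ (derived length $\leq 3$ via Corollary \ref{cor:lin2}) otherwise, and then pass to $\overline{G}^{(j)}$ by Krull-closedness of the kernel. You merely make explicit two steps the paper leaves implicit, namely the Krull-closedness of the fixer of ${\mathcal M}$ and the lift from $\mathrm{PGL}(2,{\mathbb C})$ to $\mathrm{GL}(2,{\mathbb C})$ before invoking Corollary \ref{cor:lin2}.
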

\begin{proof}[Proof of Proposition \ref{pro:cod1aux}]
Denote $\tilde{\mathcal M}={\mathcal M} \cap \hat{\mathcal O}_{n}$ and
$\tilde{\mathcal M}_{0}={\mathcal M} \cap \hat{\mathfrak m}$.
Suppose  that $df_{1}=0$ defines a foliation, i.e. there exists a
non-vanishing $\omega \in \Omega^{1} \cn{n}$
such that $\omega \wedge df_{1} =0$.
If ${\mathcal M}$ contains a non-constant element of $\hat{\mathcal O}_{n}$
then ${\mathcal M} \cap \hat{\mathcal O}_{n} = {\mathbb C}[[z]] \circ f_{0}$
for some $f_{0} \in \tilde{\mathcal M}_{0}$
\cite{MaMo:Aen}. Moreover $f_{0}$ can be any element of
$\tilde{\mathcal M}_{0}$ that is reduced, i.e.
there are no $g \in  \hat{\mathcal O}_{n}$ and $m \geq 2$
such that $f_{0}=g^{m}$. Moreover any element $g$ of ${\mathcal M}$
satisfies either that either $g$ or $1/g$ belongs to
${\mathcal M} \cap \hat{\mathcal O}_{n}$. The other case
corresponds to $\tilde{\mathcal M}={\mathbb C}$. Then
there exists a non-constant $f_{0} \in {\mathcal M}$ such that
${\mathcal M} = {\mathbb C} (f_{0})$ \cite{Ce-Ma:Ast}.
The following paragraphs are intended to generalize the Mattei-Moussu
and Cerveau-Matei results to the formal setting.

Suppose there exists a pure meromorphic first integral, i.e.
an element $f \in {\mathcal M}$ of the form $a/b$ where
$a$ and $b$ are coprime elements of $\hat{\mathfrak m}$.
We reduce the general case to the case $n=2$ by restricting the
elements of ${\mathcal M}$ to a generic linear subspace $L$ of dimension $2$.

Notice that
even if $df=0$ does not define a
foliation we can desingularize $df=0$.
Indeed it has the same desingularization process than some foliation
since equidesingularization of formal integrable $1$-forms (in dimension $2$)
is a finite determination property.

We claim that there exists a sequence
$\pi_{1}, \hdots, \pi_{p}$ of blow-ups, satisfying that
$\pi_{1}$ is the blow-up of the origin, $\pi_{j+1}$ is the blow-up
of a point in $(\pi_{1} \circ \hdots \circ \pi_{j})^{-1}(0)$ for any $1 \leq j < p$
and there exists an irreducible component $D$ of
$(\pi_{1} \circ \hdots \circ \pi_{p})^{-1}(0)$ such that
$(f \circ \pi_{1} \circ \hdots \circ \pi_{p})_{|D}$ is not constant.
Otherwise the desingularization of $df=0$ implies that are finitely many
formal invariant curves, contradicting that $a - \lambda b=0$
is a formal invariant curve ($(a - \lambda b) | d(a- \lambda b) \wedge df$)
for any $\lambda \in {\mathbb C}$.

We can identify $D$ with ${\bf CP}^{1}$. Let ${\mathbb C} (z)$ be the field of rational functions
defined in the Riemann sphere.
We define the map
\[
\begin{array}{ccccc}
\tau & : & {\mathcal M} & \to & {\mathbb C} (z) \\
     &   & g & \mapsto & (g \circ \pi_{1} \circ \hdots \circ \pi_{p})_{|D}.
\end{array}
\]
Since $D$ is a dicritic component, the kernel of $\tau$ is trivial.
We deduce that $\tau: {\mathcal M} \to \tau({\mathcal M})$ is an isomorphism
of fields. Since an element $g$ of $\tilde{\mathcal M}$
satisfies that $(g \circ \pi_{1} \circ \hdots \circ \pi_{p})_{|D}$ is constant,
we deduce $\tilde{\mathcal M}  = {\mathbb C}$.
A subfield of ${\mathbb C} (z)$, and in particular $\tau ({\mathcal M})$, is of the form
${\mathbb C}(\iota)$ for some $\iota \in {\mathbb C} (z)$ by L\"{u}roth's theorem
(cf. \cite{Waerden}[p. 198]).
Hence there exists $f_{0} \in {\mathcal M} \setminus {\mathbb C}$ such that
${\mathcal M} = {\mathbb C} (f_{0})$.

Suppose   $\tilde{\mathcal M} \neq {\mathbb C}$.
Since $\tilde{\mathcal M} \neq {\mathbb C}$ implies that there are no
pure meromorphic first integrals, we obtain that
any $f \in {\mathcal M}$ satisfies either
$f \in \tilde{\mathcal M}$ or $1/f \in \tilde{\mathcal M}$.

Given $g \in \hat{\mathcal O}_{n}$ we define $\nu (g)$ as the vanishing order of $g$
at the origin, i.e. we have $g \in \hat{\mathfrak m}^{\nu (g)} \setminus \hat{\mathfrak m}^{\nu(g) +1}$.
Let  $f_{0}$ be an element of $\tilde{\mathcal M}_{0} \setminus \{ 0 \}$ such that
$\nu (f_{0})$ is minimal.

We claim that $\tilde{\mathcal M}= {\mathbb C}[[f_{0}]]$. It suffices to show  $\tilde{\mathcal M} \subset {\mathbb C}[[f_{0}]]$.
Consider an element $g$ of $\tilde{\mathcal M}$.
There exists $c_{0} \in {\mathbb C}$ such that $g - c_{0} \in \tilde{\mathcal M}_{0}$.
We have a unique expression $\nu (g-c_{0}) = q \nu (f_{0}) + r$ where $q \in {\mathbb Z}_{\geq 0}$
and $0 \leq r < \nu (f_{0})$. The quotient $g_{q}:=(g-c_{0}) /f_{0}^{q}$ has vanishing order $r \geq 0$
and it is a formal power series (since it can not be the inverse of a vanishing power series).
Since $r < \nu (f_{0})$ we deduce $r=0$ and hence $g$ is of the form
$c_{0} + g_{q} f_{0}^{q}$ where $g_{q}$ belongs to $\tilde{\mathcal M} \setminus {\mathcal M}_{0}$.
We repeat the previous trick to obtain that $g$ is of the form
$\sum_{k=0}^{\infty} c_{k} f_{0}^{k}$ and as a consequence it belongs to ${\mathbb C}[[f_{0}]]$.

We obtain ${\mathcal M}= {\mathbb C}[[f_{0}]][f_{0}^{-1}]$
since either $f \in \tilde{\mathcal M}$ or $1/f \in \tilde{\mathcal M}$ for any $f \in {\mathcal M}$.
\end{proof}
\begin{proof}[Proof of Proposition \ref{pro:cod1}]
Given $\varphi \in G$ we obtain $f_{0} \circ \varphi = h_{\varphi} \circ f_{0}$
for some $h_{\varphi}$ since
${\mathcal M} \circ \varphi =  {\mathcal M}$.
Moreover $h_{\varphi}$ belongs to ${\mathbb C}(z)$
if there exists a pure meromorphic first integral.
Since $\tilde{\mathcal M}_{0} \circ \varphi =  \tilde{\mathcal M}_{0}$
the series $h_{\varphi}$ belongs to the maximal ideal of ${\mathbb C}[[z]]$ if
$\tilde{\mathcal M} \neq  {\mathbb C}$. It is clear that
$h_{\varphi} \circ h_{\varphi^{-1}} = Id =  h_{\varphi^{-1}}  \circ h_{\varphi}$
and $h_{\varphi \circ \eta} = h_{\varphi} \circ h_{\eta}$ for
$\varphi, \eta \in G$. We deduce that the map $\Lambda$ defined in $G$ by
$\Lambda (\varphi) = h_{\varphi}$ is of the form
\[
\begin{array}{ccc}
G & \stackrel{\Lambda} {\longrightarrow} & \mathrm{Aut} ({\bf CP}^{1}) \\
\varphi & \to & h_{\varphi}
\end{array}
\ \ \mathrm{or} \ \
\begin{array}{ccc}
G & \stackrel{\Lambda} {\longrightarrow} & \diffh{}{} \\
\varphi & \to & h_{\varphi}
\end{array}
\]
depending on wether or not $\tilde{\mathcal M}={\mathbb C}$.
The map $\Lambda$
is a homomorphism of groups.

In the former case
$\Lambda (G)$ is a solvable group of length at most $3$
by Corollary \ref{cor:lin2}. We
deduce that $f \circ \varphi = f$ for all
$f \in {\mathcal M}$ and $\varphi \in \overline{G}^{(3)}$.
In the latter case
the group $\Lambda (G)$ is metabelian
(cf. \cite{Ilya-Yako}[section $6B_{1}$]).
As a consequence $f \circ \varphi = f$ for all
$f \in {\mathcal M}$ and $\varphi \in \overline{G}^{(2)}$.
\end{proof}
\section{Soluble lengths of groups in dimension $2$}
\label{sec:2}
In this section we show that the derived length of a solvable subgroup of
$\diff{}{2}$ (or $\diffh{}{2}$) is less or equal than $5$.
The sharpness of the bound, that completes the proof of the Main Theorem
for $n=2$, is postponed to section \ref{sec:examples}.

The next lemma is of technical interest. It is a key ingredient in
the proof of the Main Theorem
for every dimension.
\begin{lem}
\label{lem:BCH}
Let $L$ be a solvable pro-algebraic subgroup of $\diffh{}{n}$.
Let ${\mathfrak l}_{N}$ be the Lie algebra associated to $L_{u}$
(cf. Definition \ref{def:liealg}).
Suppose there exists a set $B=\{ Y_{1}, \hdots, Y_{a}, X_{1}, \hdots, X_{b}\}$
of linear independent elements of
$\hat{\mathfrak X} \cn{n} \otimes_{\mathbb C} \hat{K}_{n}$
such that
\begin{enumerate}
\item ${\mathfrak l}_{N}$ is contained in the $\hat{K}_{n}$-vector space
generated by $B$.
\item $[Z,W] \in {\mathfrak l}'$ for all $Z,W \in B$ where
${\mathfrak l}'$ is the $\hat{K}_{n}$-vector space generated by
$\{ Y_{1}, \hdots, Y_{a} \}$.
\item Every element element $\alpha_{1} Y_{1} + \hdots + \alpha_{a} Y_{a} +
\beta_{1} X_{1} + \hdots + \beta_{b} X_{b}$ of ${\mathfrak l}_{N}$ satisfies
$Y_{k}(\beta_{j})=X_{l}(\beta_{j})=0$ and $\beta_{j} \circ \varphi =\beta_{j}$
for all $1 \leq j,l \leq b$, $1 \leq k \leq a$ and $\varphi \in L$.
\item $\varphi^{*} Y_{k} \in {\mathfrak l}'$ and $\varphi^{*} X_{j} - X_{j} \in {\mathfrak l}'$
for all $\varphi \in L$,  $1 \leq k \leq a$, $1 \leq j \leq b$.
\item $L/L_{u}$ is either cyclic or a finite abelian group.
\end{enumerate}
Then $\overline{L}^{(1)}$ is a unipotent group whose Lie algebra is contained in
${\mathfrak l}' \cap {\mathfrak l}_{N}$.
\end{lem}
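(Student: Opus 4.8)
The plan is to handle the two assertions separately: unipotency is immediate from hypothesis (5), while the containment of Lie algebras is where the real work lies, and it will combine a Baker--Campbell--Hausdorff computation with the commutator calculus on $L/L_{u}$.

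First I would dispatch unipotency and fix notation. By (5) the quotient $L/L_{u}$ is abelian, so $[L,L]\subseteq L_{u}$; since $L_{u}=\overline{L}_{u}^{(0)}$ is pro-algebraic, hence Krull-closed (Lemma \ref{lem:solaux}, Remark \ref{rem:pacck}), its Krull closure $\overline{L}^{(1)}$ also lies in $L_{u}$ and is therefore unipotent. Writing ${\mathfrak h}$ for the Lie algebra of $\overline{L}^{(1)}$, the inclusion ${\mathfrak h}\subseteq{\mathfrak l}_{N}$ is automatic and the real content is ${\mathfrak h}\subseteq{\mathfrak l}'$. I would set up the $\hat{K}_{n}$-space $V$ generated by $B$, the splitting $V={\mathfrak l}'\oplus\langle X_{1},\dots,X_{b}\rangle_{\hat{K}_{n}}$ from the linear independence of $B$, and the projection $q:V\to V/{\mathfrak l}'$; by (1) it restricts to ${\mathfrak l}_{N}$, and for $Z\in{\mathfrak l}_{N}$ the condition $q(Z)=0$ detects exactly membership in ${\mathfrak l}'$.

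The first substantive step is $[{\mathfrak l}_{N},{\mathfrak l}_{N}]\subseteq{\mathfrak l}'$. Expanding $[U,W]$ for $U,W\in{\mathfrak l}_{N}$ via $[fZ,gZ']=fg[Z,Z']+fZ(g)Z'-gZ'(f)Z$, the bracket terms $[Z,Z']$ with $Z,Z'\in B$ lie in ${\mathfrak l}'$ by (2) and are killed by $q$, while the coefficient-derivative terms that could produce an $X_{j}$-component carry a factor $Z(\beta_{j})$ or $Z(\beta'_{j})$ with $Z\in B$, which vanishes by (3). Hence $q([U,W])=0$, so ${\mathfrak l}'\cap{\mathfrak l}_{N}$ is a Lie subalgebra of $\hat{\mathfrak X}_{N}\cn{n}$. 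I also need it Krull-closed: by Cramer's rule each $\beta_{j}$ is a fixed $\hat{K}_{n}$-linear form $\beta_{j}(Z)=\sum_{l}e_{jl}Z(x_{l})$ in the components of $Z$, so after clearing a common denominator $D\in\hat{\mathcal O}_{n}\setminus\{0\}$ the space ${\mathfrak l}'\cap{\mathfrak l}_{N}$ is the intersection of kernels of the Krull-continuous maps $Z\mapsto\sum_{l}(De_{jl})Z(x_{l})$ into $\hat{\mathcal O}_{n}$.

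Next I would introduce $\Phi=q\circ\log:L_{u}\to V/{\mathfrak l}'$. The Baker--Campbell--Hausdorff formula (Equation (\ref{equ:BCH})), legitimate on the unipotent group $L_{u}$, shows that $\log(\varphi\psi)-\log\varphi-\log\psi$ is a Krull-convergent sum of iterated brackets of length $\ge 2$, all lying in $[{\mathfrak l}_{N},{\mathfrak l}_{N}]\subseteq{\mathfrak l}'\cap{\mathfrak l}_{N}$; closedness of this subalgebra then gives $\Phi(\varphi\psi)=\Phi(\varphi)+\Phi(\psi)$, so $\Phi$ is a homomorphism into the torsion-free abelian group $V/{\mathfrak l}'$. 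I would then check that $\Phi$ is invariant under $L$-conjugation: writing $\log\varphi=\sum\alpha_{k}Y_{k}+\sum\beta_{j}X_{j}\in{\mathfrak l}_{N}$, one has $\log(\eta\varphi\eta^{-1})=\eta_{*}\log\varphi$, and condition (4) applied to $\eta^{-1}$ (using $\eta_{*}=(\eta^{-1})^{*}$) gives $\eta_{*}Y_{k}\in{\mathfrak l}'$ and $\eta_{*}X_{j}\equiv X_{j}\ (\mathrm{mod}\ {\mathfrak l}')$; together with $\beta_{j}\circ\eta^{-1}=\beta_{j}$ from (3) and the semilinearity of $\eta_{*}$, this yields $q(\eta_{*}\log\varphi)=q(\log\varphi)$, i.e. $\Phi(\eta\varphi\eta^{-1})=\Phi(\varphi)$.

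The final and most delicate step is to pass from commutators with one unipotent entry to arbitrary commutators. Setting $f(\varphi,\psi)=\Phi([\varphi,\psi])$, the factorization $[\varphi,\psi]=(\varphi\psi\varphi^{-1})\psi^{-1}$ shows $f=0$ as soon as one argument lies in $L_{u}$, and the identity $[\varphi,\psi_{1}\psi_{2}]=[\varphi,\psi_{1}]\,\psi_{1}[\varphi,\psi_{2}]\psi_{1}^{-1}$, combined with the homomorphism and invariance properties of $\Phi$, makes $f$ biadditive and alternating; it therefore descends to $\bar f$ on $L/L_{u}$. Here (5) is exactly what kills $\bar f$: if $L/L_{u}$ is cyclic then $\bar f$ is determined by its vanishing value on a generator, and if $L/L_{u}$ is finite then $d\,\bar f(\bar\varphi,\bar\psi)=\bar f(\bar\varphi^{d},\bar\psi)=0$ forces $\bar f=0$ by torsion-freeness of $V/{\mathfrak l}'$. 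Thus $\log[\varphi,\psi]\in{\mathfrak l}'\cap{\mathfrak l}_{N}$ for all $\varphi,\psi\in L$, so by Proposition \ref{pro:consag} the pro-algebraic group $\exp({\mathfrak l}'\cap{\mathfrak l}_{N})$ contains $[L,L]$, hence (being Krull-closed) its Krull closure $\overline{L}^{(1)}$; applying $\log$ gives ${\mathfrak h}\subseteq{\mathfrak l}'\cap{\mathfrak l}_{N}$. I expect this last reduction to be the main obstacle, precisely because a general abelian $L/L_{u}$ would admit a nonzero alternating $\bar f$ and the conclusion would fail.
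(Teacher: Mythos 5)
Your proof is correct and follows essentially the same route as the paper: the same use of Baker--Campbell--Hausdorff with conditions (2)--(3) to make the $X$-components additive on $L_{u}$, conditions (3)--(4) to make them conjugation-invariant, and condition (5) to kill them on commutators, ending with Proposition \ref{pro:consag} and Corollary \ref{cor:lie} exactly as in the paper. The only difference is cosmetic: you package the last step as a biadditive alternating pairing $\bar f$ on $L/L_{u}$ with values in the torsion-free group $V/{\mathfrak l}'$, whereas the paper unwinds the same bilinearity by iterating $p$-th powers to produce the factor $p^{2}$ in front of $\sum\beta_{k}X_{k}$ and then forcing it to vanish.
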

\begin{proof}
Consider $Z= \sum_{j=1}^{a} \alpha_{j} Y_{j} + \sum_{k=1}^{b} \beta_{k} X_{k} \in {\mathfrak l}_{N}$ and
$\varphi \in L$. Conditions (3) and (4) imply
\begin{equation}
\label{equ:act}
 \varphi  \circ {\rm exp} \left( \sum_{j=1}^{a} \alpha_{j} Y_{j} + \sum_{k=1}^{b} \beta_{k} X_{k} \right) =
 {\rm exp}\left( \sum_{j=1}^{a} \tilde{\alpha}_{j} Y_{j} + \sum_{k=1}^{b} \beta_{k} X_{k} \right) \circ \varphi
\end{equation}
for some $\tilde{\alpha}_{1}, \hdots, \tilde{\alpha}_{a} \in \hat{K}_{n}$.
Let $Z'= \sum_{j=1}^{a} \alpha_{j}' Y_{j} + \sum_{k=1}^{b} \beta_{k}' X_{k} \in {\mathfrak l}_{N}$.
We have
\begin{equation}
\label{equ:bcha}
 {\rm exp}(Z) \circ {\rm exp}(Z')  = {\rm exp}
\left( \sum_{j=1}^{a} {\alpha}_{j}'' Y_{j} + \sum_{k=1}^{b} (\beta_{k}+\beta_{k}') X_{k} \right)
\end{equation}
where $\log ({\rm exp}(Z) \circ {\rm exp}(Z')) - (Z+Z')
\in \overline{{\mathfrak l}_{N}}^{(1)} \subset {\mathfrak l}'$
by Baker-Campbell-Hausdorff formula (\ref{equ:BCH}) and conditions (2) and (3).
The previous formulas   imply
\[ \left[ \varphi \circ  {\rm exp}\left( \sum_{j=1}^{a} {\alpha}_{j} Y_{j} + \sum_{k=1}^{b} \beta_{k} X_{k} \right),
\eta \circ \mathrm{exp} \left( \sum_{j=1}^{a} \alpha_{j}' Y_{j} + \sum_{k=1}^{b} \beta_{k}' X_{k} \right) \right]= \]
\begin{equation}
\label{equ:com}
[\varphi, \eta] \circ {\rm exp}\left( \sum_{j=1}^{a} {\alpha}_{j}'' Y_{j}  \right)
\end{equation}
for $\sum_{j=1}^{a} {\alpha}_{j} Y_{j} + \sum_{k=1}^{b} \beta_{k} X_{k}$,
$\sum_{j=1}^{a} \alpha_{j}' Y_{j} + \sum_{k=1}^{b} \beta_{k}' X_{k} \in {\mathfrak l}_{N}$
and $\varphi , \eta \in L$.

Since ${\mathfrak l}' \cap {\mathfrak l}_{N}$ is a Lie algebra of formal nilpotent vector fields that is
closed in the Krull topology, 
the group $\mathrm{exp}({\mathfrak l}' \cap {\mathfrak l}_{N})$
is a pro-algebraic subgroup of $L_{u}$ by Proposition \ref{pro:consag}.

Suppose $L/L_{u}$ is cyclic. There exists $\varphi_{0} \in G$ such that any element of $L$
is of the form $\varphi_{0}^{j} \circ \phi$ for some $j \in {\mathbb Z}$ and $\phi \in L_{u}$.
The formula (\ref{equ:com}) implies that
any commutator of elements of $L$ belongs to  $\mathrm{exp}({\mathfrak l}' \cap {\mathfrak l}_{N})$.
Since $\mathrm{exp}({\mathfrak l}' \cap {\mathfrak l}_{N})$ is pro-algebraic,
we deduce $\overline{L}^{(1)} \subset \mathrm{exp}({\mathfrak l}' \cap {\mathfrak l}_{N})$.

Suppose $L/L_{u}$ is a finite abelian group. Given $\varphi, \eta \in L$
there exists $p \in {\mathbb N}$ such that $\varphi^{p}$ and $\eta^{p}$ belong
to $L_{u}$. Since $[\varphi, \eta] \in L_{u}$ we have
$[\varphi, \eta] = \mathrm{exp} ( \sum_{j=1}^{a} {\alpha}_{j} Y_{j} + \sum_{k=1}^{b} \beta_{k} X_{k})$
and
\[ \varphi \circ \eta \circ \varphi^{-1} = \mathrm{exp}
\left( \sum_{j=1}^{a} {\alpha}_{j} Y_{j} + \sum_{k=1}^{b} \beta_{k} X_{k} \right) \circ \eta. \]
By iterating both terms of the previous equation $p$ times and using formulas
(\ref{equ:act}) and (\ref{equ:bcha}) we obtain
\[ \varphi \circ \eta^{p} \circ \varphi^{-1} = \mathrm{exp}
\left( \sum_{j=1}^{a} {\alpha}_{j}' Y_{j} + p \sum_{k=1}^{b} \beta_{k} X_{k} \right) \circ \eta^{p} \]
for some $\alpha_{1}', \hdots, \alpha_{a}' \in \hat{K}_{n}$. This leads us to
\[ \eta^{-p} \circ \varphi \circ \eta^{p}  = \mathrm{exp}
\left( \sum_{j=1}^{a} {\alpha}_{j}'' Y_{j} + p \sum_{k=1}^{b} \beta_{k} X_{k} \right) \circ \varphi \]
by equation (\ref{equ:act}). We get
\[ \eta^{-p} \circ \varphi^{p} \circ \eta^{p}  = \mathrm{exp}
\left( \sum_{j=1}^{a} {\alpha}_{j}''' Y_{j} + p^{2} \sum_{k=1}^{b} \beta_{k} X_{k} \right) \circ \varphi^{p}. \]
The infinitesimal generator of $[\eta^{-p},\varphi^{p}]$ belongs to ${\mathfrak l}'$
by equation (\ref{equ:com}).
Therefore we deduce
$[\varphi, \eta] = \mathrm{exp} ( \sum_{j=1}^{a} {\alpha}_{j} Y_{j}) \in \mathrm{exp}({\mathfrak l}' \cap {\mathfrak l}_{N})$
for all $\varphi, \eta \in L$. Since  $\mathrm{exp}({\mathfrak l}' \cap {\mathfrak l}_{N})$ is
pro-algebraic, $\overline{L}^{(1)}$ is contained in $\mathrm{exp}({\mathfrak l}' \cap {\mathfrak l}_{N})$.

The group $\overline{L}^{(1)}$ is   pro-algebraic
by Proposition \ref{pro:proa}. Since $\overline{L}^{(1)}$ is unipotent,
the Lie algebra of $\overline{L}^{(1)}$
consists of the infinitesimal generators of elements of $\overline{L}^{(1)}$
by Corollary \ref{cor:lie}. Thus it
is contained in ${\mathfrak l}' \cap {\mathfrak l}_{N}$.
\end{proof}
\begin{pro}
\label{pro:main2}
$\psi (2) \leq 5$ (cf. Definition \ref{def:psi}).
\end{pro}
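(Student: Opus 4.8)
The plan is to replace $G$ by its pro-algebraic closure $L=\overline{G}^{(0)}$, which has the same derived length by Lemma \ref{lem:solaux}, and then to prove $\overline{L}^{(5)}=\{Id\}$. Since $\sigma(2)=4$, Proposition \ref{pro:cr} and Proposition \ref{pro:ibound} give $\overline{L}^{(4)}\subseteq L_{u}$; by Proposition \ref{pro:proa} the group $\overline{L}^{(4)}$ is unipotent and pro-algebraic, so its Lie algebra $\mathfrak{a}$ is a Krull-closed subalgebra of $\mathfrak{g}_{N}$. Hence it suffices to show that $\mathfrak{a}$ is abelian. The whole point is that the naive estimate $\ell(L)\le\ell(L/L_{u})+\ell(L_{u})\le 4+3=7$ from Proposition \ref{pro:ibound} treats the linear reduction and the reduction of the unipotent part as independent; the improvement to $5$ must come from letting them interact.

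I would first dispose of the degenerate cases through the generic rank $\kappa(0)=\dim_{\hat{K}_{2}}(\mathfrak{g}_{N}\otimes_{\mathbb{C}}\hat{K}_{2})\le 2$. If $\mathfrak{g}_{N}=0$ then $\ell(L)=\ell(L/L_{u})\le 4$, and if $\overline{\mathfrak{g}}_{N}^{(1)}=0$ then $L_{u}$ is abelian and $\ell(L)\le 4+1=5$. In every case Theorem \ref{teo:uni} gives $\ell(L_{u})\le 3$, hence $\overline{\mathfrak{g}}_{N}^{(3)}=0$ by Proposition \ref{pro:lieder}, so that \emph{$\overline{\mathfrak{g}}_{N}^{(2)}$ is already abelian}. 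This identifies the target: it is enough to prove $\mathfrak{a}\subseteq\overline{\mathfrak{g}}_{N}^{(2)}$.

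The engine is the codimension-one foliation dichotomy. For $n=2$ the field $\mathcal{M}_{p}=\mathcal{M}(\overline{\mathfrak{g}}_{N}^{(p)})$ has transcendence degree at most $1$ over $\mathbb{C}$ whenever $\kappa(p)\ge 1$ (the generic leaf dimension is $\kappa(p)$), so $\mathcal{M}_{p}$ satisfies the hypotheses of Proposition \ref{pro:cod1aux}. Since $\overline{\mathfrak{g}}_{N}^{(p)}$ is invariant under $L$ by Lemma \ref{lem:normal}, we have $\mathcal{M}_{p}\circ\varphi=\mathcal{M}_{p}$ for all $\varphi\in L$, and Proposition \ref{pro:cod1} shows that every $\varphi\in\overline{L}^{(3)}$ fixes $\mathcal{M}_{p}$ pointwise (and already every $\varphi\in\overline{L}^{(2)}$ when $\mathcal{M}_{p}$ carries non-constant holomorphic first integrals). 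I would then feed this into Lemma \ref{lem:BCH} with $H=\overline{L}^{(3)}$, whose quotient $H/H_{u}$ is finite abelian by Corollary \ref{cor:fag}, supplying hypothesis $(5)$. Choosing a basis $B=\{Y_{i}\}\cup\{X_{j}\}$ so that the $Y_{i}$ span $\overline{\mathfrak{g}}_{N}^{(p+1)}\otimes\hat{K}_{2}$ and the $X_{j}$ complete it to $\mathfrak{g}_{N}\otimes\hat{K}_{2}$, Lemmas \ref{lem:aux} and \ref{lem:aux2} place the coefficients of the $X_{j}$ inside $\mathcal{M}_{p}$ (now pointwise fixed by $H$), which yields hypotheses $(3)$ and $(4)$; condition $(2)$ holds because $[\mathfrak{g}_{N},\mathfrak{g}_{N}]\subseteq\overline{\mathfrak{g}}_{N}^{(1)}$ and, in the relevant ranks, $\overline{\mathfrak{g}}_{N}^{(1)}\otimes\hat{K}_{2}=\overline{\mathfrak{g}}_{N}^{(p+1)}\otimes\hat{K}_{2}$. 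Lemma \ref{lem:BCH} then forces $\mathfrak{a}$, the Lie algebra of $\overline{L}^{(4)}=\overline{H}^{(1)}$, into $\bigl(\overline{\mathfrak{g}}_{N}^{(p+1)}\otimes\hat{K}_{2}\bigr)\cap\mathfrak{g}_{N}$.

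The main obstacle is the case $\kappa(0)=2$ with $\ell(L_{u})=3$, where the derived ranks run $2,1,1,0$. When $\kappa(0)=1$ one has $\overline{\mathfrak{g}}_{N}^{(2)}=0$ by Proposition \ref{pro:intd}, so $\overline{\mathfrak{g}}_{N}^{(1)}$ is abelian and the single descent above already gives $\mathfrak{a}\subseteq\overline{\mathfrak{g}}_{N}^{(1)}$ abelian. But in the hard case one descent only confines $\mathfrak{a}$ to a rank-one subalgebra of $\mathfrak{g}_{N}$, which need \emph{not} be abelian (the affine algebra $\langle\partial_{x},x\partial_{x}\rangle$ is a warning), and a second, independent descent would cost an extra derivation and only yield $\ell(L)\le 6$. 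The resolution — and the crux of the argument — is that the \emph{same} three derivations must simultaneously trivialize the M\"obius (length $\le 3$ by Corollary \ref{cor:lin2}) action on $\mathcal{M}_{1}$ and reduce the linear part to finite abelian (Corollary \ref{cor:fag}), so that the pointwise fixing of first integrals and the linear reduction are not consumed separately; exploiting this overlap pushes $\mathfrak{a}$ all the way into the abelian $\overline{\mathfrak{g}}_{N}^{(2)}$ within the four available derivations, giving $\overline{L}^{(5)}=\{Id\}$ and hence $\psi(2)\le 5$.
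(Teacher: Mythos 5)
Your setup tracks the paper's own proof for most of the way: pass to the pro-algebraic closure, use $\sigma(2)=4$ to place $\overline{G}^{(4)}$ inside $G_{u}$, dispose of the cases ${\mathfrak g}_{N}=0$ and $\overline{\mathfrak g}_{N}^{(1)}=0$, invoke Proposition \ref{pro:cod1} so that $\overline{G}^{(3)}$ fixes the relevant field of first integrals pointwise, and apply Lemma \ref{lem:BCH} to confine the Lie algebra ${\mathfrak a}$ of $\overline{G}^{(4)}$ to a rank-one $\hat{K}_{2}$-module spanned by some $Y$. You also correctly flag that rank one does not imply abelian. But the step that closes the argument is missing, and the substitute you propose is not a theorem. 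Your target ${\mathfrak a}\subseteq\overline{\mathfrak g}_{N}^{(2)}$ (or ${\mathfrak a}\subseteq\overline{\mathfrak g}_{N}^{(1)}$ when $\kappa(0)=1$) does not follow from anything you have set up: $\overline{G}^{(4)}$ is the closed derived group of $\overline{G}^{(3)}$, whose elements need not be unipotent, so there is no reason for $\overline{G}^{(4)}$ to sit inside the derived series of $G_{u}$. The example $G^{2}=N\rtimes L$ of Section \ref{sec:examples}, where $(G^{2})^{(4)}=N$ while ${\mathfrak g}_{N}$ is already abelian (so $\overline{\mathfrak g}_{N}^{(1)}=0$), shows that such an inclusion can fail outright; ``exploiting the overlap'' is the right slogan, but as written it is not an argument, and pursuing two independent descents would indeed only give $\ell(G)\leq 6$.

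What the paper actually proves at this point is weaker and different. With $Y$ spanning $\overline{\mathfrak g}_{N}^{(\ell_{u}-1)}\otimes_{\mathbb C}\hat{K}_{2}$, every $\varphi\in G$ satisfies $\varphi^{*}Y=f_{\varphi}\,Y$ with $f_{\varphi}\in{\mathcal M}_{\ell_{u}-1}$, because $[Y,f_{\varphi}Y]=Y(f_{\varphi})Y$ must vanish. The explicit identity
\[ [\varphi,\eta]^{*}Y=\frac{(f_{\varphi}\circ\eta\circ\varphi^{-1}\circ\eta^{-1})\,(f_{\eta}\circ\varphi^{-1}\circ\eta^{-1})}{(f_{\varphi}\circ\varphi^{-1}\circ\eta^{-1})\,(f_{\eta}\circ\eta^{-1})}\,Y \]
combined with the pointwise fixing of ${\mathcal M}_{\ell_{u}-1}$ by $\overline{G}^{(3)}$ forces $\varphi^{*}Y=Y$ for every $\varphi\in\overline{G}^{(4)}$. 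Only then does Lemma \ref{lem:pol} convert $\log\varphi=hY$ into $Y(h)=0$, and ${\mathfrak a}$ lands in the abelian algebra $\{hY:Y(h)=0\}$ --- which in general is strictly larger than $\overline{\mathfrak g}_{N}^{(2)}$ and need not be contained in the derived series of ${\mathfrak g}_{N}$ at all. This multiplicative-cocycle computation is the crux you would need to supply; everything else in your outline is sound and essentially coincides with the paper's route.
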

\begin{proof}
We can suppose
$G = \overline{G}^{(0)}$ without lack of generality
by Lemma \ref{lem:solaux}.
Denote $\ell=\ell (G)$ and $\ell_{u}=\ell (G_{u})$.
We are going to prove the proposition by considering different cases.

Notice that we always have $\ell (G) \leq \sigma (2) + \ell_{u}$ by
Proposition \ref{pro:ibound}. Since $\sigma (2)=4$ we can suppose
$\ell_{u} \geq 2$.
We claim $\kappa (\ell_{u}-1) = 1$. Otherwise we have $\kappa (0)=2$
and Corollary \ref{cor:maxd} implies $\kappa (1)=0$ and
$\ell_{u} = 1$.

We denote ${\mathcal M}_{j}={\mathcal M}(\overline{\mathfrak g}_{N}^{(j)})$.
Let $Y \in \overline{\mathfrak g}_{N}^{(\ell_{u}-1)}$
with $Y \not \equiv 0$.
Any element $\varphi \in G$ preserves
$ \overline{\mathfrak g}_{N}^{(\ell_{u}-1)}$ by Lemma \ref{lem:normal}.
We deduce that
$g \circ \varphi \in {\mathcal M}_{\ell_{u}-1} $ for all
$g \in {\mathcal M}_{\ell_{u}-1}$ and $\varphi \in G$.
If ${\mathcal M}_{\ell_{u}-1} \neq {\mathbb C}$ then the formal foliation
defined by ${\mathcal M}_{\ell_{u}-1}$ has codimension $1$ since
$Y(g) \equiv 0$ for any $g \in {\mathcal M}_{\ell_{u}-1}$. Thus
we obtain
$g \circ \varphi =g $ for all
$g \in {\mathcal M}_{\ell_{u}-1}$ and $\varphi \in \overline{G}^{(3)}$
by Proposition \ref{pro:cod1}. The same result obviously holds if ${\mathcal M}_{\ell_{u}-1} = {\mathbb C}$.

The property
$\varphi_{*}  \overline{\mathfrak g}_{N}^{(\ell_{u}-1)} =  \overline{\mathfrak g}_{N}^{(\ell_{u}-1)}$
implies that $\varphi^{*} Y = f Y$ for some $f \in \hat{K}_{2}$.
Indeed $f$ belongs to ${\mathcal M}_{\ell_{u}-1}$
since
\[ [ \overline{\mathfrak g}^{(\ell_{u}-1)}, \overline{\mathfrak g}^{(\ell_{u}-1)}]=0 \implies
[Y,fY] = Y(f) Y=0. \]
Suppose $\varphi^{*} Y = f Y$ and $\eta^{*} Y = g Y$. The equation
\[ [\varphi, \eta]^{*} Y= (\eta^{-1})^{*} (\varphi^{-1})^{*} \eta^{*} \varphi^{*} Y =
\frac{(f \circ \eta \circ \varphi^{-1} \circ \eta^{-1}) (g \circ \varphi^{-1} \circ \eta^{-1})}{(f \circ  \varphi^{-1} \circ \eta^{-1}) (g \circ \eta^{-1})}  Y \]
and the invariance of  ${\mathcal M}_{\ell_{u}-1}$ by the action induced by $\overline{G}^{(3)}$
implies $\varphi^{*} Y \equiv Y$ for any $\varphi \in \overline{G}^{(4)}$.

Let us consider the case $\kappa(0)=1$.
We have $\overline{G}^{(4)} \subset G_{u}$ since $\sigma (2)=4$.
Fix $\varphi \in \overline{G}^{(4)}$. It is of the form
${\rm exp}(h Y)$ for some $h Y \in \hat{\mathfrak X}_{N} \cn{2}$.
Since $\varphi^{*} Y = Y$, Lemma \ref{lem:pol} implies $Y(h)=0$.
We deduce $[\log \eta, \log \omega]=0$ for all $\eta, \omega \in \overline{G}^{(4)}$.
Therefore $\overline{G}^{(4)}$ is a commutative group.
%
%
We obtain $\ell (G) \leq 5$.

Finally let us consider the case $\kappa(0)=2$.
Let $X \in {\mathfrak g}_{N}$ such that $\{ Y,X \}$
is a base of ${\mathfrak g}_{N} \otimes_{\mathbb C} \hat{K}_{2}$.
Let $W \in {\mathfrak g}_{N}$.
The formal vector field $W$ is of the form $\alpha Y + \beta X$.
Since $\kappa(1)<2$ by Corollary \ref{cor:maxd}, Lemma \ref{lem:aux}
implies that $\beta$ belongs to ${\mathcal M}_{0} = {\mathbb C}$.
Hence given $\upsilon \in G$ we have $\upsilon^{*} X = \gamma Y + t X$ for some
$\gamma \in \hat{K}_{2}$ and $t \in {\mathbb C}^{*}$.
Since $\upsilon^{*} Y$ is contained in $\overline{\mathfrak g}_{N}^{\ell_{u}-1}$,
we obtain $\upsilon^{*} X = \gamma Y + X$ for
any $\upsilon \in \overline{G}^{(1)}$ and some
$\gamma \in \hat{K}_{2}$ depending on $\upsilon$.
We denote $L=\overline{G}^{(3)}$.
The group $L/L_{u}$ is isomorphic to either $\{Id\}$ or $\{Id, -Id\}$
by Corollary \ref{cor:lin2}.

We apply Lemma \ref{lem:BCH} to
$L=\overline{G}^{(3)}$, $Y_{1}=Y$, $X_{1}=X$;
hence any element $\varphi \in \overline{G}^{(4)}$ is of the form
${\rm exp}(\alpha Y)$ for some $\alpha \in \hat{K}_{2}$ and
$\alpha Y$ in $\hat{\mathfrak X}_{N} \cn{2}$.
We obtain $\ell (G) \leq 5$ analogously as for $\kappa (0)=1$.
\end{proof}
\section{Soluble lengths of groups in dimension $3$, $4$ and $5$.}
\label{sec:35}
\begin{pro}
\label{pro:main3}
$\psi (n) \leq 2n+1$ (cf. Definition \ref{def:psi}) for any $n \in \{3,4,5\}$.
%
\end{pro}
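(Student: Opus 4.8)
The plan is to reduce everything to the study of the action of the derived groups of $G$ on fields of meromorphic first integrals which, for $n\le 5$, are forced to have transcendence degree at most $1$, and then to feed this information into the Baker--Campbell--Hausdorff machinery of Lemma \ref{lem:BCH}. First I would assume $G=\overline{G}^{(0)}$ is pro-algebraic, which is harmless by Lemma \ref{lem:solaux} since it preserves the derived length, and set $\ell_{u}=\ell(G_{u})$. Proposition \ref{pro:ibound} gives $\ell(G)\le \sigma(n)+\ell_{u}=5+\ell_{u}$, because $\sigma(n)=5$ for $n\in\{3,4,5\}$, while Remark \ref{rem:dim1} gives $\ell_{u}\le 2\kappa(0)$. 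Hence the case $\kappa(0)\le n-2$ is immediate: there $\ell(G)\le 5+2(n-2)=2n+1$.

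It remains to treat the two hard cases $\kappa(0)\in\{n-1,n\}$. Since $\kappa$ is non-increasing and drops strictly every two steps (Proposition \ref{pro:intd}), and $\kappa(1)<n$ when $\kappa(0)=n$ (Corollary \ref{cor:maxd}), the set of levels $p$ with $\kappa(p)\ge n-1$ is an initial segment $\{0,\dots,p_{0}\}$ with $\kappa(p_{0}+1)\le n-2$. For every such $p$ the field ${\mathcal M}_{p}={\mathcal M}(\overline{\mathfrak g}_{N}^{(p)})$ has transcendence degree $n-\kappa(p)\le 1$ over ${\mathbb C}$, so---and this is exactly where the hypothesis $n\le 5$ enters---it defines a foliation of codimension at most $1$. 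As $\overline{\mathfrak g}_{N}^{(p)}$ is invariant under $\overline{G}^{(0)}$ (Lemma \ref{lem:normal}), the group $G$ preserves ${\mathcal M}_{p}$, and Proposition \ref{pro:cod1} shows that every element of $\overline{G}^{(3)}$ fixes ${\mathcal M}_{p}$ pointwise.

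Next I would apply Lemma \ref{lem:BCH} to $L=\overline{G}^{(\sigma(n)-1)}=\overline{G}^{(4)}$. Its hypothesis (5) holds because $L/L_{u}$ is finite abelian by Corollary \ref{cor:fag}. Choosing a $\hat{K}_{n}$-basis of ${\mathfrak l}_{N}\otimes_{\mathbb C}\hat{K}_{n}$ adapted to the flag $\overline{\mathfrak g}_{N}^{(p_{0}+1)}\subset\overline{\mathfrak g}_{N}^{(p_{0})}$, with the vectors $Y_{i}$ spanning $\overline{\mathfrak g}_{N}^{(p_{0}+1)}\otimes_{\mathbb C}\hat{K}_{n}$ and the $X_{j}$ completing it, the bracket condition (2) follows from $[\,\overline{\mathfrak g}_{N}^{(p_{0})},\overline{\mathfrak g}_{N}^{(p_{0})}\,]\subset\overline{\mathfrak g}_{N}^{(p_{0}+1)}$, while the coefficient conditions (3) and (4) are supplied by Lemmas \ref{lem:aux} and \ref{lem:aux2} together with the pointwise triviality of the $\overline{G}^{(3)}$-action on ${\mathcal M}_{p_{0}}$ just obtained. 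Lemma \ref{lem:BCH} then yields that $\overline{G}^{(5)}=\overline{L}^{(1)}$ is unipotent with Lie algebra contained in the $\hat{K}_{n}$-span of the $Y_{i}$, hence of rank at most $\kappa(p_{0}+1)\le n-2$. By Remark \ref{rem:dim1} this gives $\ell(\overline{G}^{(5)})\le 2(n-2)$, and therefore $\ell(G)\le 5+2(n-2)=2n+1$.

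The main obstacle is the middle step: forcing the unipotent part of $\overline{G}^{(4)}$ to lie in the layer $\overline{\mathfrak g}_{N}^{(p_{0})}$, so that its derived algebra collapses into $\overline{\mathfrak g}_{N}^{(p_{0}+1)}$ and condition (2) of Lemma \ref{lem:BCH} becomes available. This is precisely where the transcendence-degree-one analysis is indispensable: Proposition \ref{pro:cod1} is what converts ``$G$ preserves ${\mathcal M}_{p_{0}}$'' into ``$\overline{G}^{(3)}$ fixes ${\mathcal M}_{p_{0}}$ pointwise'', and Lemmas \ref{lem:aux}--\ref{lem:aux2} are what turn this pointwise invariance into the statement that the coefficients $\beta_{j}$ in the expansion of the elements of ${\mathfrak l}_{N}$ along the adapted basis are genuine first integrals fixed by $L$, i.e. hypothesis (3) of Lemma \ref{lem:BCH}. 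Verifying these conditions carefully in the borderline case $\kappa(0)=n-1$, and checking that the bookkeeping closes to the sharp value $2n+1$ rather than $2n+2$, is the delicate part of the argument; for $n\ge 6$ the fields ${\mathcal M}_{p}$ would acquire transcendence degree $\ge 2$ and Proposition \ref{pro:cod1} would no longer apply, which is the structural reason the method stops at $n=5$.
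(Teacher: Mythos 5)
Your overall strategy --- reduce to fields of first integrals of transcendence degree at most one, use Proposition \ref{pro:cod1} to make $\overline{G}^{(3)}$ act trivially on them, and then collapse $\overline{G}^{(5)}$ into a Lie algebra of smaller $\hat{K}_{n}$-rank via Lemma \ref{lem:BCH} and Remark \ref{rem:dim1} --- is indeed the paper's strategy, and your treatment of the easy case $\kappa(0)\le n-2$ and, essentially, of the case where $\kappa$ never takes the value $n-1$ matches the paper. But there is a genuine gap at exactly the point you flag as ``the main obstacle'' and then do not close: hypothesis (1) of Lemma \ref{lem:BCH} requires the Lie algebra ${\mathfrak l}_{N}$ of $(\overline{G}^{(4)})_{u}$ to be contained in the $\hat{K}_{n}$-span of your adapted basis, i.e.\ in $\overline{\mathfrak g}_{N}^{(p_{0})}\otimes_{\mathbb C}\hat{K}_{n}$. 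Nothing you cite gives this: the indices in $\kappa(\cdot)$ and $\overline{\mathfrak g}_{N}^{(\cdot)}$ refer to the derived series of the \emph{unipotent} Lie algebra ${\mathfrak g}_{N}$, whereas $\overline{G}^{(4)}$ is a derived group of $G$ itself; a priori one only knows $(\overline{G}^{(4)})_{u}\subset G_{u}$, so ${\mathfrak l}_{N}\subset{\mathfrak g}_{N}\otimes_{\mathbb C}\hat{K}_{n}$, which has rank $\kappa(0)$, possibly equal to $n$, not $\kappa(p_{0})$. Moreover Lemmas \ref{lem:aux} and \ref{lem:aux2} concern elements of the layers $\overline{\mathfrak g}_{N}^{(p)}$, so they cannot supply hypothesis (3) for elements of ${\mathfrak l}_{N}$ until that containment is established. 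If instead you enlarge the basis so that the $X_{j}$ complete a basis of all of ${\mathfrak g}_{N}\otimes_{\mathbb C}\hat{K}_{n}$ (restoring (1)), then condition (2) demands $\kappa(1)\le\kappa(p_{0}+1)$, which fails precisely when $\kappa$ takes the value $n-1$.

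This is why the paper splits into four cases. In the case ``$\kappa(p)=n-1$ for some $p$ but $\kappa(q)=n-2$ for no $q$'' your uniform scheme would give $\ell(G)\le 5+2\kappa(p_{0}+1)=5+2(n-3)=2n-1$, but Lemma \ref{lem:BCH} is not applicable there at level $4$; the paper instead builds a homomorphism $\Lambda:\overline{G}^{(3)}\to\mathrm{GL}(2,{\mathcal M}_{2})$, uses Corollary \ref{cor:lin2} and a one-parameter-group continuity argument to trivialize it on $\overline{G}^{(6)}$, and only then applies Baker--Campbell--Hausdorff directly to conclude that the Lie algebra of $\overline{G}^{(7)}$ has rank at most $n-3$, yielding $7+2(n-3)=2n+1$. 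Likewise, in the case $\kappa(p)=n-1$ with ${\mathcal M}_{p}={\mathbb C}$, Proposition \ref{pro:cod1} gives nothing and the paper needs the separate nilpotency/Krull-convergence argument of Lemma \ref{lem:cte2} (to kill the constant $t$ in $[X_{1},W]=\sum_{j}h_{j}Y_{j}+tX_{1}$) together with a proof that $\kappa(0)=n-1$ before Lemma \ref{lem:BCH} can be invoked; and in the remaining case ${\mathcal M}_{p}\neq{\mathbb C}$ the containment of $\log\varphi$ in $\overline{\mathfrak g}_{N}^{(p)}\otimes_{\mathbb C}\hat{K}_{n}$ is obtained from a dimension count on the space of vector fields annihilating ${\mathcal M}_{p}$. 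These case-specific arguments are the substance of the proof and are absent from your sketch.
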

The rest of this section is devoted to show the previous proposition.
We divide the proof in several cases. We can suppose
$G = \overline{G}^{(0)}$ by Lemma \ref{lem:solaux}.
Notice that the function $\sigma$ is always equal to $5$ for
$n \in \{3,4,5\}$ (cf. \cite{Newman}).
\subsection{There is no $p$ such that $\kappa (p)=n-1$}
\label{subsec:nop2}
Corollary \ref{cor:maxd} and Proposition \ref{pro:intd}
imply $\ell (G_{u}) \leq 2n-3$.
If $\ell (G_{u}) \leq 2n-4$ then $\ell (G) \leq \sigma(n) + 2n-4 = 5 + 2n-4=2n+1$
by Proposition \ref{pro:ibound}.
Thus we can suppose $\ell (G_{u})=2n-3$.
Again we use Corollary \ref{cor:maxd} and Proposition \ref{pro:intd}
to obtain $\kappa(0)=n$, $\kappa(1)=n-2$, $\kappa(2)=n-2$ and $\kappa(3)=n-3$.
Indeed we have $\kappa (2j)=n-j-1$ and $\kappa (2j-1)=n-j-1$
for $1 \leq j \leq n-2$.
Let $\{Y_{1}, \hdots, Y_{n-2} \} \subset \overline{\mathfrak g}_{N}^{(2)}$
be a base of $\overline{\mathfrak g}_{N}^{(2)} \otimes_{\mathbb C} \hat{K}_{n}$
whereas $X_{1},X_{2}$ are elements of ${\mathfrak g}_{N}$ such that
$\{Y_{1},\hdots,Y_{n-2}, X_{1},X_{2}\}$ is a base of the $\hat{K}_{n}$-vector space
${\mathfrak g}_{N}  \otimes_{\mathbb C} \hat{K}_{n}$.
Since ${\mathcal M}_{0} = {\mathbb C}$, Lemma \ref{lem:aux} implies that
any element $\sum_{j=1}^{n-2} \alpha_{j} Y_{j} + \beta X_{1} + \gamma X_{2}$ of
${\mathfrak g}_{N}$ where $\alpha_{1}, \hdots, \alpha_{n-2}, \beta, \gamma \in \hat{K}_{n}$
satisfies $\beta,\gamma \in {\mathbb C}$. Given $\varphi \in G$
the property $\varphi^{*} \overline{\mathfrak g}_{N}^{(2)} =   \overline{\mathfrak g}_{N}^{(2)}$
implies
\[
\left(
\begin{array}{c}
\varphi^{*} X_{1} \\
\varphi^{*} X_{2}
\end{array}
\right) = A_{\varphi} \left( \begin{array}{c}
X_{1} \\
X_{2}
\end{array} \right) + \left(
\begin{array}{c}
\sum_{j=1}^{n-2} g_{j} Y_{j} \\
\sum_{j=1}^{n-2} h_{j} Y_{j}
\end{array}
\right)
\]
where $A_{\varphi} \in \mathrm{GL}(2,{\mathbb C})$.
Since $\rho(2)=4$ we deduce that
\begin{equation}
\label{equ:res2}
\left(
\begin{array}{c}
\varphi^{*} X_{1} \\
\varphi^{*} X_{2}
\end{array}
\right) =  \left( \begin{array}{c}
X_{1} \\
X_{2}
\end{array} \right) + \left(
\begin{array}{c}
\sum_{j=1}^{n-2} g_{j}' Y_{j} \\
\sum_{j=1}^{n-2} h_{j}' Y_{j}
\end{array}
\right)
\end{equation}
for any $\varphi \in \overline{G}^{(4)}$.

 We denote $L= \overline{G}^{(4)}$. The group $L/L_{u}$ is finite abelian by
Corollary \ref{cor:fag}.
Notice that $[X_{1},X_{2}]$ is of the form $\sum_{j=1}^{n-2} f_{j} Y_{j}$
for some $f_{1},\hdots,f_{n-2} \in \hat{K}_{n}$
since $\kappa (1)=n-2$.
We apply Lemma \ref{lem:BCH} to
$L=\overline{G}^{(4)}$. We obtain that any
element $\varphi$ of $\overline{G}^{(5)}$ is of the form
${\rm exp}(\sum_{j=1}^{n-2} \alpha_{j} Y_{j})$ for some
$\alpha_{1},\hdots,\alpha_{n-2} \in \hat{K}_{n}$
and $\sum_{j=1}^{n-2} \alpha_{j} Y_{j} \in \hat{\mathfrak X}_{N} \cn{n}$.
Hence the Lie algebra ${\mathfrak h}$ of $\overline{G}^{(5)}$
satisfies $\dim ({\mathfrak h} \otimes_{\mathbb C} \hat{K}_{n}) \leq n-2$.
Remark \ref{rem:dim1}
implies $\ell (G) \leq 5 + 2(n-2) = 2n+1$.
\subsection{There exists $p$ such that $\kappa(p)=n-1$ but no $q$ with
$\kappa(q)=n-2$.}
We obtain $\ell (G_{u}) \leq 2n-3$ by Corollary \ref{cor:maxd} and Proposition \ref{pro:intd}.
If $\ell (G_{u}) \leq 2n-4$ then $\ell (G) \leq 2n+1$ by Proposition \ref{pro:ibound}.
Hence we can suppose that $\ell (G_{u})=2n-3$.
We obtain $\kappa(0)=n$, $\kappa(1)=n-1$, $\kappa(2)=n-1$, $\kappa(3)=n-3$,
$\kappa (4)=n-3$ and so on.
We choose a base $\{ Y_{1}, \hdots, Y_{n-3} \}$ of
${\mathfrak l}' := \overline{\mathfrak g}_{N}^{(4)} \otimes_{\mathbb C} \hat{K}_{n}$
contained in $\overline{\mathfrak g}_{N}^{(4)}$.
We choose $X_{1}, X_{2} \in \overline{\mathfrak g}_{N}^{(2)}$
such that $\{ Y_{1}, \hdots, Y_{n-3}, X_{1}, X_{2} \}$ is a base of
$\overline{\mathfrak g}_{N}^{(2)} \otimes_{\mathbb C} \hat{K}_{n}$.
We have ${\mathcal M}_{1} \neq {\mathbb C}$; otherwise
Proposition \ref{pro:maxd} would imply $\kappa(2) < \kappa(1)$.
Moreover $[X_{1},X_{2}]$ belongs to ${\mathfrak l}'$
(or vanishes if $n=3$) since $[X_{1},X_{2}] \in \overline{\mathfrak g}_{N}^{(3)}$
and $\kappa (3)=n-3$.

Let $\varphi \in G$. Since
$\varphi^{*} \overline{\mathfrak g}_{N}^{(2)} = \overline{\mathfrak g}_{N}^{(2)}$
by Lemma \ref{lem:normal}, we obtain
\[ \varphi^{*} X_{j} = \alpha_{j} X_{1} + \beta_{j} X_{2} + \sum_{k=1}^{n-3} \gamma_{j,k} Y_{k} \]
where $j \in \{1,2\}$ and the coefficients belong to $\hat{K}_{n}$.
We have $[\overline{\mathfrak g}_{N}^{(2)},\overline{\mathfrak g}_{N}^{(2)}] \subset
\overline{\mathfrak g}_{N}^{(3)}$ by definition and
$[\overline{\mathfrak g}_{N}^{(2)},\overline{\mathfrak g}_{N}^{(4)}] \subset
\overline{\mathfrak g}_{N}^{(4)}$ by Lemma \ref{lem:normal}.
In particular we obtain
$[\varphi^{*} X_{j}, X_{k}] \in {\mathfrak l}'$ and
$[\varphi^{*} X_{j}, Y_{l}] \in {\mathfrak l}'$ for all
$1 \leq j,k \leq 2$ and $1 \leq l \leq n-3$.
This implies
\[ X_{1} (\alpha_{j})=X_{2}(\alpha_{j})=Y_{1}(\alpha_{j}) = \hdots = Y_{n-3}(\alpha_{j})=0 \]
and analogously for $\beta_{j}$ for $j \in \{1,2\}$. Thus
$\alpha_{j}$ and  $\beta_{j}$ belong to ${\mathcal M}_{2}$ for any $j \in \{1,2\}$.
We obtain
\[
\left(
\begin{array}{c}
\varphi^{*} X_{1} \\
\varphi^{*} X_{2}
\end{array}
\right) = A_{\varphi} \left( \begin{array}{c}
X_{1} \\
X_{2}
\end{array} \right) + \left(
\begin{array}{c}
\sum_{k=1}^{n-3} \gamma_{1,k} Y_{k} \\
\sum_{k=1}^{n-3} \gamma_{2,k} Y_{k}
\end{array}
\right) \]
where $A_{\varphi} \in \mathrm{GL} (2, {\mathcal M}_{2})$.

Since ${\mathcal M}_{1} \neq {\mathbb C}$ and $\kappa (1)=n-1$,
the foliation defined by ${\mathcal M}_{1}$ has codimension $1$.
We have $g \circ \varphi = g$ for all
$g \in {\mathcal M}_{1}={\mathcal M}_{2}$ and $\varphi \in \overline{G}^{(3)}$
by Proposition \ref{pro:cod1}.
Hence the mapping $\Lambda: \overline{G}^{(3)} \to \mathrm{GL} (2, {\mathcal M}_{2})$
defined by $\Lambda(\varphi) = A_{\varphi}$
is a homomorphism of groups.

Let us show $\ell (\Lambda( \overline{G}^{(3)})) \leq 3$.
We have $\Lambda( \overline{G}^{(3)})^{(3)} \subset \{-Id, Id\}$ by
Corollary \ref{cor:lin2}. Given $\varphi \in \overline{G}^{(6)}$ we obtain
$\varphi^{t} \in \overline{G}^{(6)}$ for any $t \in {\mathbb C}$
since $\overline{G}^{(6)}$ is pro-algebraic, $\overline{G}^{(6)} \subset G_{u}$ and
Lemma \ref{lem:solaux}. Given $j \in \{1,2\}$ we obtain
\[ (\varphi^{t})^{*} X_{j} = \mu_{j}(t) X_{j} + \sum_{k=1}^{n-3} \gamma_{j,k,t} Y_{k} \]
where $\mu_{j}: {\mathbb C} \to \{-1,1\}$ is a continuous function such that $\mu_{j}(0)=1$.
Thus $\mu_{j}$ is constant and equal to $1$ for $j \in \{1,2\}$.

Fix $\varphi \in \overline{G}^{(6)}$.
The previous discussion implies
$\varphi^{*} X_{j} -X_{j} \in {\mathfrak l}'$
for $j \in \{1,2\}$ and
$g \circ \varphi = g$ for any $g \in {\mathcal M}_{1}$.
Since $\overline{G}^{(5)} \subset G_{u}$ Proposition \ref{pro:lieder} implies
$\log \varphi \in \overline{\mathfrak g}_{N}^{(1)}$.
We deduce
$\log \varphi = \alpha_{1} X_{1} + \alpha_{2} X_{2} + \sum_{k=1}^{n-3} \delta_{k} Y_{k}$.
It is clear that $(\varphi^{t})^{*} Y_{k} \in {\mathfrak l}'$
for any $t \in {\mathbb C}$ since
$\eta^{*} \overline{\mathfrak g}_{N}^{(4)} = \overline{\mathfrak g}_{N}^{(4)}$
for any $\eta \in G$.
Since $[\log \varphi, Z] = \lim_{t \to 0} ((\varphi^{t})^{*} Z  - Z)/t$, we obtain
that $[X_{1}, \log \varphi]$, $[X_{2}, \log \varphi]$ and $[Y_{k},\log \varphi]$
belong to ${\mathfrak l}'$ for any $1 \leq k \leq n-3$.
We deduce that $\alpha_{1}$ and $\alpha_{2}$ belong to ${\mathcal M}_{2}$.
Therefore $\log [\varphi, \eta]$ belongs to ${\mathfrak l}'$ for all
$\varphi, \eta \in \overline{G}^{(6)}$
by Baker-Campbell-Hausdorff formula. Another application of
Baker-Campbell-Hausdorff provides that the Lie algebra ${\mathfrak h}$
of $\overline{G}^{(7)}$ is contained in ${\mathfrak l}'$
and in particular $\dim ({\mathfrak h} \otimes_{\mathbb C} \hat{K}_{n}) \leq n-3$.
We obtain $\ell (G) \leq 7 + 2(n-3)=2n+1$ by Remark \ref{rem:dim1}.
\subsection{There exists $p,q$ such that $\kappa(p)=n-1$,
$\kappa(q)=n-2$ and ${\mathcal M}_{p} = {\mathbb C}$.}
Let us remark that there exists a unique $p \in {\mathbb N} \cup \{0\}$
such that $\kappa(p)=n-1$ by Proposition \ref{pro:maxd}.
We can suppose that $\kappa (m) < n-2$ for $m>q$.
Let $\{Y_{1}, \hdots, Y_{n-2} \} \subset \overline{\mathfrak g}_{N}^{(q)}$ be a
base of ${\mathfrak l}':=\overline{\mathfrak g}_{N}^{(q)} \otimes_{\mathbb C} \hat{K}_{n}$.
We choose $X_{1} \in \overline{\mathfrak g}_{N}^{(p)}$ such that
$\{Y_{1},\hdots,Y_{n-2}, X_{1} \}$ is a base of
$\overline{\mathfrak g}_{N}^{(p)} \otimes_{\mathbb C} \hat{K}_{n}$.
\begin{lem}
\label{lem:cte2}
$[X_{1},W] \in {\mathfrak l}'$  for any $W \in {\mathfrak g}_{N}$.
\end{lem}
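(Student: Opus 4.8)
The plan is to prove that the $X_{1}$-component of $[X_{1},W]$ vanishes for every $W \in {\mathfrak g}_{N}$, where components are taken with respect to the $\hat{K}_{n}$-basis $\{Y_{1},\hdots,Y_{n-2},X_{1}\}$ of $\overline{\mathfrak g}_{N}^{(p)} \otimes_{\mathbb C} \hat{K}_{n}$. First I record the numerics. Since $p$ is the unique index with $\kappa(p)=n-1$ and ${\mathcal M}_{p}={\mathbb C}$, Proposition \ref{pro:maxd} gives $\kappa(p+1) \leq n-2$; as $\kappa$ is non-increasing and $\kappa(q)=n-2<n-1=\kappa(p)$ forces $q > p$, I get $\kappa(p+1)=n-2$. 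Because $\overline{\mathfrak g}_{N}^{(q)} \subseteq \overline{\mathfrak g}_{N}^{(p+1)}$ and both tensored spaces have $\hat{K}_{n}$-dimension $n-2$, I obtain $\overline{\mathfrak g}_{N}^{(p+1)} \otimes_{\mathbb C} \hat{K}_{n} = {\mathfrak l}'$. If $p=0$ the statement is immediate, since then $[X_{1},W] \in [{\mathfrak g}_{N},{\mathfrak g}_{N}] \subseteq \overline{\mathfrak g}_{N}^{(1)} \subseteq {\mathfrak l}'$; so I may assume $p \geq 1$.

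For $W \in {\mathfrak g}_{N}$ the Lie algebra $\overline{\mathfrak g}_{N}^{(p)}$ is an ideal of ${\mathfrak g}_{N}$ by Lemma \ref{lem:normal}, hence $[X_{1},W] \in \overline{\mathfrak g}_{N}^{(p)}$, and I write $[X_{1},W] = \sum_{j} c_{j} Y_{j} + d_{W} X_{1}$ with $d_{W} \in \hat{K}_{n}$ its $X_{1}$-coordinate; denote by $\lambda$ this coordinate functional. Applying Lemma \ref{lem:aux} (with the present $Y_{j}$ as a basis of $\overline{\mathfrak g}_{N}^{(p+1)} \otimes_{\mathbb C} \hat{K}_{n}={\mathfrak l}'$ and the single $X_{1}$ completing it to a basis of $\overline{\mathfrak g}_{N}^{(p)} \otimes_{\mathbb C} \hat{K}_{n}$), the $X_{1}$-coordinate of any element of $\overline{\mathfrak g}_{N}^{(p)}$ lies in ${\mathcal M}_{p}={\mathbb C}$. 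In particular $d_{W} = \lambda([X_{1},W]) \in {\mathbb C}$, so it remains to upgrade ``$d_{W}$ constant'' to ``$d_{W}=0$''. This last step is the main obstacle.

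I would resolve it through the one-parameter groups of the unipotent part. For $\varphi \in \overline{G}_{u}^{(0)}$ Lemma \ref{lem:normal} gives $\varphi^{*} \overline{\mathfrak g}_{N}^{(p)} = \overline{\mathfrak g}_{N}^{(p)}$, so $\varphi^{*} X_{1} \in \overline{\mathfrak g}_{N}^{(p)}$ and $c_{\varphi} := \lambda(\varphi^{*} X_{1}) \in {\mathbb C}^{*}$ by the previous paragraph. Since the $c_{\varphi}$ are constants, $\varphi \mapsto c_{\varphi}$ is multiplicative on $\overline{G}_{u}^{(0)}$. Given $W \in {\mathfrak g}_{N}$ put $\varphi = \mathrm{exp}(W)$; exactly as in the proof of Lemma \ref{lem:pol}, $(\varphi^{t})^{*} X_{1}$ is polynomial in $t$ because $\log \varphi = W$ is nilpotent, hence $c_{\varphi^{t}} = \lambda((\varphi^{t})^{*} X_{1})$ is a polynomial in $t$ with complex coefficients. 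From $c_{\varphi^{t}} \, c_{\varphi^{-t}} = c_{\mathrm{id}} = 1$ this polynomial is a unit of ${\mathbb C}[t]$, so it equals the constant $c_{\varphi^{0}} = 1$.

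Differentiating $c_{\varphi^{t}} \equiv 1$ at $t=0$, and using $\lim_{t \to 0}((\varphi^{t})^{*} X_{1} - X_{1})/t = [\log \varphi, X_{1}] = [W,X_{1}]$ together with the $\hat{K}_{n}$-linearity of $\lambda$, I get $0 = \lambda([W,X_{1}]) = -\lambda([X_{1},W]) = -d_{W}$. Hence $d_{W}=0$, that is $[X_{1},W] = \sum_{j} c_{j} Y_{j} \in {\mathfrak l}'$, which is the assertion. The only delicate point is the constancy/polynomiality bookkeeping of the last two paragraphs (equivalently, that a unipotent group admits no nontrivial homomorphism to ${\mathbb C}^{*}$); everything else is a direct application of Lemmas \ref{lem:normal} and \ref{lem:aux} and the dimension count $\kappa(p+1)=n-2$.
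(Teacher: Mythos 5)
Your proof is correct, and while it shares the paper's skeleton, it resolves the crucial step by a genuinely different mechanism. Both arguments dispose of the case $p=0$ identically (then $[X_{1},W]\in\overline{\mathfrak g}_{N}^{(1)}$, whose $\hat{K}_{n}$-span is ${\mathfrak l}'$), and both use Lemma~\ref{lem:aux} together with ${\mathcal M}_{p}={\mathbb C}$ to see that the $X_{1}$-coordinate of $[X_{1},W]$ is a complex constant $t$; the whole content is showing $t=0$. The paper works at the Lie algebra level: it iterates $X_{j+1}=[X_{j},W]$, observes that the $X_{1}$-components form the geometric sequence $t^{j-1}$, and kills $t$ by noting that each jet algebra ${\mathfrak g}_{k,u}$ is a nilpotent Lie algebra of nilpotent matrices, so the iterated brackets vanish jet by jet in the Krull topology. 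You instead work at the group level: you exponentiate $W$, observe that the one-parameter group $\varphi^{s}=\mathrm{exp}(sW)$ acts on the line $\bigl(\overline{\mathfrak g}_{N}^{(p)}\otimes_{\mathbb C}\hat{K}_{n}\bigr)/{\mathfrak l}'$ through a character $s\mapsto c_{\varphi^{s}}$ valued in ${\mathcal M}_{p}^{*}={\mathbb C}^{*}$, argue that this character is a polynomial unit and hence identically $1$, and recover $t=0$ by differentiating at $s=0$ via Lemma~\ref{lem:pol}. This is the ``unipotent groups admit no nontrivial characters'' principle, and it buys you a uniform treatment of all $p\geq 1$ without invoking Lie's theorem on the jet algebras; the paper's route is slightly more economical in that it never leaves the Lie algebra. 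Both versions leave the same piece of bookkeeping implicit, namely that the $X_{1}$-coordinate functional on $\overline{\mathfrak g}_{N}^{(p)}$ is determined by finitely many jets (extract it by Cramer's rule from the components of the vector fields and read off a single nonvanishing Taylor coefficient); this is what lets you pass from ``$(\varphi^{s})^{*}X_{1}$ is jet-wise polynomial in $s$'' to ``$c_{\varphi^{s}}$ is a polynomial in $s$'', and what lets the paper pass from Krull convergence of $X_{j}$ to $t^{j-1}=0$. Since you flag this point explicitly and it is no worse than what the published argument assumes, I consider the proposal complete.
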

\begin{proof}
If $\kappa(0) \neq n$ then $\kappa(0)=n-1$ and $\kappa(1)=n-2$.
Moreover $\{ Y_{1},\hdots,Y_{n-2} \}$ is a base of
$\overline{\mathfrak g}_{N}^{(1)} \otimes_{\mathbb C} \hat{K}_{n}$.
The Lie bracket $[X_{1},W]$ belongs to $\overline{\mathfrak g}_{N}^{(1)}$
and then to ${\mathfrak l}'$.
Suppose $\kappa(0)=n$ from now on; it implies $p=1$.
We choose
$Z \in {\mathfrak g}_{N}$ such that
$\{Y_{1},\hdots,Y_{n-2},X_{1},Z\}$ is a base of
${\mathfrak g}_{N} \otimes_{\mathbb C} \hat{K}_{n}$.

The elements of $\overline{\mathfrak g}_{N}^{(p)}$ are of the form
$\sum_{j=1}^{n-2} \alpha_{j} Y_{j} + \beta X_{1}$ where $\beta \in {\mathcal M}_{p} = {\mathbb C}$ by
Lemma \ref{lem:aux}. The formal vector field $[X_{1},W]$ belongs to
$[\overline{\mathfrak g}_{N}^{(p)}, {\mathfrak g}_{N}] \subset\overline{\mathfrak g}_{N}^{(p)}$.
Hence $[X_{1},W]= \sum_{j=1}^{n-2} h_{j} Y_{j} + t X_{1}$ where   $t \in {\mathbb C}$.
Moreover $[Y_{j},W]$ belongs to ${\mathfrak l}'$ for $1 \leq j \leq n-2$
since $[Y_{j},W]$ belongs to $\overline{\mathfrak g}_{N}^{(q)}$.
We define $X_{2} = [X_{1},W]$ and $X_{j+1} = [X_{j},W]$ for $j \geq 2$.
Then $X_{j}$ is of the form $\sum_{k=1}^{n-2} h_{j,k} Y_{k} + t^{j-1} X_{1}$ for any
$j \in {\mathbb N}$.

The Lie algebra of $k$-jets of ${\mathfrak g}_{N}$ is ${\mathfrak g}_{k,u}$
by definition. Since ${\mathfrak g}_{N}$ is solvable,
${\mathfrak g}_{k,u}$ is a solvable Lie algebra of nilpotent matrices
for any $k \in {\mathbb N}$.
Therefore ${\mathfrak g}_{k,u}$ is nilpotent by Lie's Theorem \ref{teo:Lie}
for any $k \in {\mathbb N}$.
%
%
Hence the sequence $X_{j}$ tends to $0$ in the Krull topology.
This implies $t=0$.
\end{proof}
\begin{lem}
$\kappa(0)=n-1$.
In particular we have $\ell (G_{u}) \leq 2n-3$.
\end{lem}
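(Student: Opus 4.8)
The plan is to establish $\kappa(0)=n-1$ by excluding the only competing possibility, $\kappa(0)=n$, and then to read off the bound $\ell(G_{u})\leq 2n-3$ from the shape of the sequence $\kappa$. Since $\kappa$ is non-increasing, $\kappa(0)\leq n$, and $p$ is the unique index with $\kappa(p)=n-1$, the value $\kappa(0)$ must be $n-1$ or $n$; so I first suppose $\kappa(0)=n$ and seek a contradiction. As in the proof of Lemma \ref{lem:cte2}, $\kappa(0)=n$ together with Corollary \ref{cor:maxd} and the existence of an index where $\kappa=n-1$ forces $p=1$ and $\kappa(1)=n-1$, and the hypothesis gives ${\mathcal M}_{1}={\mathcal M}_{p}={\mathbb C}$. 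In this situation $\{Y_{1},\hdots,Y_{n-2},X_{1}\}$ is a base of $\overline{\mathfrak g}_{N}^{(1)}\otimes_{\mathbb C}\hat{K}_{n}$; I complete it to a base $\{Y_{1},\hdots,Y_{n-2},X_{1},Z\}$ of ${\mathfrak g}_{N}\otimes_{\mathbb C}\hat{K}_{n}$ and write $\beta(W)\in\hat{K}_{n}$ for the $X_{1}$-coordinate of $W\in{\mathfrak g}_{N}$ in this base.

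The key step is to prove that $\beta(W)$ is a constant for every $W$. The starting observation is that every bracket of two base vectors lies in ${\mathfrak l}'$ — those with a factor $X_{1}$ by Lemma \ref{lem:cte2}, those with a factor $Y_{i}$ because $\overline{\mathfrak g}_{N}^{(q)}$ is an ideal of ${\mathfrak g}$ by Lemma \ref{lem:normal}, and $[Z,Z]=0$ — so the structure-constant part drops out and the $X_{1}$-coordinate of a bracket is given by the frame formula $\beta([U,W])=U(\beta(W))-W(\beta(U))$. Taking $U=X_{1}$ and using $\beta(X_{1})=1$ gives $\beta([X_{1},W])=X_{1}(\beta(W))$, which vanishes because $[X_{1},W]\in{\mathfrak l}'$ by Lemma \ref{lem:cte2}; taking $U=Y_{i}$ and using $\beta(Y_{i})=0$ gives $\beta([Y_{i},W])=Y_{i}(\beta(W))$, which vanishes because $[Y_{i},W]\in{\mathfrak l}'$. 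Hence $\beta(W)$ is annihilated by $X_{1}$ and by all the $Y_{i}$, so $\beta(W)\in{\mathcal M}(\overline{\mathfrak g}_{N}^{(1)})={\mathcal M}_{1}={\mathbb C}$.

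With $\beta$ constant on ${\mathfrak g}_{N}$ the frame formula yields $\beta([V,V'])=V(\beta(V'))-V'(\beta(V))=0$ for all $V,V'\in{\mathfrak g}_{N}$, so every commutator has vanishing $X_{1}$-coordinate. As the condition of having zero $X_{1}$-coordinate is closed in the Krull topology (it is the vanishing of the wedge with $Y_{1}\wedge\hdots\wedge Y_{n-2}\wedge Z$), it passes to $\overline{\mathfrak g}_{N}^{(1)}$; but $X_{1}\in\overline{\mathfrak g}_{N}^{(1)}$ has $\beta(X_{1})=1$, a contradiction. This rules out $\kappa(0)=n$ and proves $\kappa(0)=n-1$. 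For the last assertion, the uniqueness of $p$ and the existence of $q$ with $\kappa(q)=n-2$ force $\kappa(1)=n-2$; iterating Proposition \ref{pro:intd} along the odd indices then gives $\kappa(2n-3)=0$, whence $\overline{\mathfrak g}_{N}^{(2n-3)}=0$ and $\ell(G_{u})\leq 2n-3$ by Proposition \ref{pro:lieder}.

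I expect the main obstacle to be the verification that the $X_{1}$-coordinate of $[X_{1},W]$ and of $[Y_{i},W]$ reduces exactly to $X_{1}(\beta(W))$ and $Y_{i}(\beta(W))$: this is precisely the content of the frame formula, and it hinges on the fact, supplied by Lemma \ref{lem:cte2} and the ideal property, that all base-pair brackets contribute nothing in the $X_{1}$ direction. Once $\beta$ is seen to be a first integral of $\overline{\mathfrak g}_{N}^{(1)}$, the hypothesis ${\mathcal M}_{p}={\mathbb C}$ does all the remaining work, and the contradiction with $X_{1}\in\overline{\mathfrak g}_{N}^{(1)}$ is immediate.
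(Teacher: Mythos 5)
Your proof is correct and follows essentially the same route as the paper: assume $\kappa(0)=n$ (which forces $p=1$), use Lemma \ref{lem:cte2} together with the ideal property of $\overline{\mathfrak g}_{N}^{(q)}$ to conclude that the $X_{1}$-coordinate of every element of ${\mathfrak g}_{N}$ is a first integral of $\overline{\mathfrak g}_{N}^{(1)}$, hence lies in ${\mathcal M}_{p}={\mathbb C}$, and deduce that the derived algebra loses the $X_{1}$-direction. The only cosmetic differences are that the paper also tracks the $Z$-coordinate and states the contradiction as $\kappa(1)\leq n-2$ (so no index has $\kappa=n-1$), whereas you obtain it directly from $X_{1}\in\overline{\mathfrak g}_{N}^{(1)}$, and that you unroll Remark \ref{rem:dim1} explicitly for the final bound $\ell(G_{u})\leq 2n-3$.
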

\begin{proof}
Suppose $\kappa(0)=n$. We use the notations in Lemma \ref{lem:cte2}.
Let $W:=\sum_{j=1}^{n-2} \alpha_{j} Y_{j} + \beta X_{1} + \gamma Z \in {\mathfrak g}_{N}$.
We have
\[ [Y_{j}, W] \in {\mathfrak l}' \ {\rm and} \ [X_{1}, W] \in {\mathfrak l}'  \]
for any $1 \leq j \leq n-2$ by
$[\overline{\mathfrak g}_{N}^{(q)}, \overline{\mathfrak g}_{N}] \subset \overline{\mathfrak g}_{N}^{(q)}$
and Lemma \ref{lem:cte2}.
We deduce that $\beta, \gamma$ belong to ${\mathcal M}_{p}={\mathbb C}$.
As a consequence we obtain $\overline{\mathfrak g}_{N}^{(1)} \subset {\mathfrak l}'$
and $\dim (\overline{\mathfrak g}_{N}^{(1)} \otimes_{\mathbb C} \hat{K}_{n}) \leq n-2$.
We obtain a contradiction since there is no $p$ such that $\kappa(p)=n-1$.

We have $\kappa(1)=n-2$ by Proposition \ref{pro:maxd}.
Thus Proposition \ref{pro:lieder} and
Remark \ref{rem:dim1} imply   $\ell (G_{u}) \leq 1 + 2(n-2) = 2n-3$.
\end{proof}

Any element $W$ of ${\mathfrak g}_{N}$ is of the form
$\sum_{j=1}^{n-2} \alpha_{j} Y_{j} + s X_{1}$ for some
$s \in {\mathbb C}$ by Lemma \ref{lem:aux}.
Let  $\varphi \in G$. It satisfies
$\varphi^{*} {\mathfrak g}_{N} = {\mathfrak g}_{N}$ and then
$\varphi^{*} X_{1}$ is of the form  $\sum_{j=1}^{n-2} h_{j} Y_{j} + s X_{1}$ for some
$s \in {\mathbb C}^{*}$.
We obtain
$\varphi^{*} X_{1} - X_{1} \in {\mathfrak l}'$ for any $\varphi \in  \overline{G}^{(1)}$.
The group $\overline{G}^{(4)}/(\overline{G}^{(4)})_{u}$ is finite abelian by
Corollary \ref{cor:fag}.
The Lie algebra of the unipotent pro-algebraic group $\overline{G}^{(5)}$
is contained in ${\mathfrak l}'$ by Lemma
\ref{lem:BCH} applied to $L=\overline{G}^{(4)}$.
We obtain $\ell (G) \leq 2n+1$ by Remark \ref{rem:dim1}.
\subsection{There exists $p,q$ such that $\kappa(p)=n-1$,
$\kappa(q)=n-2$ and ${\mathcal M}_{p} \neq {\mathbb C}$.}
Let $p$ and $q$ such that $\kappa(m) <n-1$ if $m>p$
and $\kappa (l) < n-2$ if $l > q$.
Let $\{Y_{1},\hdots,Y_{n-2} \} \subset \overline{\mathfrak g}_{N}^{(q)}$
be a base of ${\mathfrak l}':=\overline{\mathfrak g}_{N}^{(q)} \otimes_{\mathbb C} \hat{K}_{n}$.
We choose $X_{1} \in \overline{\mathfrak g}_{N}^{(p)}$ such that
$\{Y_{1},\hdots,Y_{n-2}, X_{1} \}$ is a base of
$\overline{\mathfrak g}_{N}^{(p)} \otimes_{\mathbb C} \hat{K}_{n}$.

The foliation defined by ${\mathcal M}_{p}$ has codimension $1$
since $\kappa (p)=n-1$ and ${\mathcal M}_{p} \neq {\mathbb C}$.
Thus $g \circ \varphi = g$ for all $g \in {\mathcal M}_{p}$ and
$\varphi \in \overline{G}^{(3)}$ by Proposition \ref{pro:cod1}.

The elements of $\overline{\mathfrak g}_{N}^{(p)}$ are of the form
$\sum_{j=1}^{n-2} \alpha_{j} Y_{j} + \beta X_{1}$ where $\beta \in {\mathcal M}_{p}$
by Lemma \ref{lem:aux}.
Any $\varphi \in G$ satisfies
$\varphi^{*} X_{1}=  \sum_{j=1}^{n-2} \alpha_{j}' Y_{j} + \beta' X_{1}$ where  $\beta' \in {\mathcal M}_{p}$.
Thus $\varphi^{*} X_{1}$ is of the form $ \sum_{j=1}^{n-2} \alpha_{j}'' Y_{j} + X_{1}$ for any
$\varphi \in \overline{G}^{(4)}$.
Notice that $\overline{G}^{(4)}/(\overline{G}^{(4)})_{u}$ is a finite abelian group by
Corollary \ref{cor:fag}.

Fix $Z \in {\mathfrak g}_{N}$ with $\mathrm{exp} (Z) \in \overline{G}^{(4)} \cap G_{u}$.
The property $g \circ {\rm exp} (Z) = g$ implies
$Z(g)=0$ for any $g \in {\mathcal M}_{p}$ by Lemma \ref{lem:pol}.
Given $g \in \hat{K}_{n} \setminus {\mathbb C}$, the $\hat{K}_{n}$-vector subspace of
$\hat{K}_{n} \otimes_{\mathbb C} \hat{\mathfrak X} \cn{n}$
of elements having $g$ as first integral has dimension $n-1$.
In particular we deduce
\[ \{ W \in  \hat{K}_{n} \otimes_{\mathbb C} \hat{\mathfrak X} \cn{n} : W(g) =0 \ \forall g \in {\mathcal M}_{p} \} =
\overline{\mathfrak g}_{N}^{(p)} \otimes_{\mathbb C} \hat{K}_{n} . \]
Thus $Z$ is of the form $\sum_{j=1}^{n-2} \alpha_{j} Y_{j} + \beta X_{1}$.
The one parameter group $\{ \mathrm{exp}(t  Z): t \in {\mathbb C} \}$ is contained in
$\overline{G}^{(4)} \cap G_{u}$ since $\overline{G}^{(4)}$ is pro-algebraic by
Proposition \ref{pro:proa}. Therefore
${\rm exp}(t Z)^{*}  Y_{j} - Y_{j}$ and ${\rm exp}(t Z)^{*}  X_{1} - X_{1}$
belong to ${\mathfrak l}'$ for all $1 \leq j \leq n-2$ and $t \in {\mathbb C}$.
We deduce
$[Y_{j},Z] \in {\mathfrak l}'$ and $[X_{1},Z] \in {\mathfrak l}'$
for any $1 \leq j \leq n-2$.
Hence $\beta$ belongs to ${\mathcal M}_{p}$.
%
Any element $\varphi \in \overline{G}^{(5)}$ is of the form
${\rm exp}(\sum_{j=1}^{n-2} \gamma_{j} Y_{j})$ for some
$\sum_{j=1}^{n-2} \gamma_{j} Y_{j} \in {\mathfrak g}_{N}$
by Lemma \ref{lem:BCH} applied to $L=\overline{G}^{(4)}$.
We obtain $\ell (G) \leq 2n+1$ by Remark \ref{rem:dim1}.
\begin{rem}
\label{rem:impound}
It is easy to see that, up to minor changes,
Proposition \ref{pro:main3} implies $\ell (G) \leq \sigma (n) + 2n -4$ for
every solvable subgroup $G$ of $\diffh{}{n}$ with $n \geq 3$.
Next section provides an example of a subgroup $G^{n}$ of
$\mathrm{Diff} ({\mathbb C}^{n},0)$ such that $\ell (G^{n}) =2n+1$
for any $n \geq 2$. We deduce
\[ 2n+1 \leq \psi(n) \leq  \sigma (n) + 2n -4 \ \ \forall n \geq 3. \]
The inequality $2n+1 \leq \sigma (n) + 2n -4$ for $n \geq 3$ is an equality
exactly for $n=3,4,5$, providing $\psi (n)=2n+1$ for $n \in \{3,4,5\}$.
The next best result is obtained for $n \in \{6,7\}$ since
$\sigma (6)=\sigma (7)=6$.  In fact we obtain
$(\psi(6), \psi(7)) \in \{ (13,15), (13,16), (14,16) \}$ by Proposition \ref{pro:mas2}.
\end{rem}
\section{Examples}
\label{sec:examples}
We provide examples in \cite{JR:arxivdl} of solvable groups
$G \subset \diff{}{n}$ such that $j^{1} G$ is a connected group of
matrices and $\ell (G)=2n$ for any $n \in {\mathbb N}$.
Thus a solvable subgroup $G$ of $\diffh{}{n}$ ($2 \leq n \leq 5$)
of greatest derived length satisfies
$\ell (G) \in \{2n, 2n+1 \}$.  In this section we prove that
the upper estimates are the correct ones.

Let $L$ be a solvable group of $\mathrm{GL} (2,{\mathbb C})$ such that
$\ell (L)=\rho(2)=4$. We denote
\[ \phi_{\lambda,\mu} = {\rm exp} \left( (\lambda x + \mu y)
\left( x \frac{\partial}{\partial x} + y \frac{\partial}{\partial y} \right)   \right) \]
for $\lambda, \mu \in {\mathbb C}$. We define the group
\[ N = \left\{  \phi_{\lambda,\mu} :
\lambda,\mu \in {\mathbb C}  \right\} . \]
We consider the group $G^{2} = N \rtimes L$ obtained as semidirect product of
$N$ and $L$. We have
\[ G^{2} = \left\{ T \circ \phi_{\lambda,\mu} :
T \in L, \ \lambda,\mu \in {\mathbb C}  \right\} . \]
The group $N$ is a normal commutative subgroup of $G^{2}$.
\begin{pro}
\label{pro:lg25}
$\ell (G^{2})=5$ and $(G^{2})^{(4)} = N$.
\end{pro}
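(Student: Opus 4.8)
The plan is to exploit the semidirect-product structure $G^{2} = N \rtimes L$ and to compute the entire derived series of $G^{2}$ explicitly. First I would record two structural facts about $N$. Since every linear form $f = \lambda x + \mu y$ satisfies $R(f) = f$ for $R = x \frac{\partial}{\partial x} + y \frac{\partial}{\partial y}$ (Euler's identity), the bracket $[fR, gR] = (f R(g) - g R(f)) R$ vanishes for linear $f,g$; hence $(\lambda,\mu) \mapsto \phi_{\lambda,\mu}$ is an isomorphism from $({\mathbb C}^{2},+)$ onto $N$, and I identify $N$ with the vector space $V = {\mathbb C}^{2}$. The conjugation action of $T \in L$ is $T \circ \phi_{\lambda,\mu} \circ T^{-1} = \mathrm{exp}(T_{*}(fR)) = \mathrm{exp}((f \circ T^{-1})R)$, so $L$ acts on $V$ by the contragredient of its standard representation; in particular this action is faithful, and it is irreducible because a solvable subgroup of $\mathrm{GL}(2,{\mathbb C})$ preserving a line is triangularizable and hence metabelian, contradicting $\ell(L)=4$.

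The heart of the proof is the claim that $(G^{2})^{(j)} = N \rtimes L^{(j)}$ for $1 \le j \le 4$, which I would prove by induction on $j$. Writing elements of the semidirect product as pairs $(v,T)$ with $v \in V$ and $T \in L$, a direct computation gives the commutator $[(0,T),(u,\mathrm{Id})] = ((T-\mathrm{Id})u,\mathrm{Id})$. For fixed $T$ these elements exhaust the complex-linear subspace $\mathrm{Im}(T-\mathrm{Id})$ of $V$ as $u$ ranges over $V$; therefore $(G^{2})^{(j+1)} \cap N$, which is an $L$-submodule of $V$ because $(G^{2})^{(j+1)}$ is normal in $G^{2}$, contains the $L$-invariant linear span $\sum_{T \in L^{(j)}} \mathrm{Im}(T-\mathrm{Id})$. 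This span is nonzero whenever $L^{(j)} \neq \{\mathrm{Id}\}$ (by faithfulness, $T - \mathrm{Id} \neq 0$ for $T \neq \mathrm{Id}$), and then equals $V$ by irreducibility. Since $\ell(L)=4$ forces $L^{(j)} \neq \{\mathrm{Id}\}$ for $j \le 3$, I conclude $N \subseteq (G^{2})^{(j+1)}$ for $0 \le j \le 3$; combined with the fact that the surjection $G^{2} \to L$ carries $(G^{2})^{(j)}$ onto $L^{(j)}$, this yields $(G^{2})^{(j)} = N \rtimes L^{(j)}$ for $1 \le j \le 4$.

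Finally I would read off the statement. Taking $j=4$ and using $L^{(4)} = \{\mathrm{Id}\}$ (as $\ell(L)=4$; indeed $L^{(3)} = \{\mathrm{Id},-\mathrm{Id}\}$ by Corollary \ref{cor:lin2}) gives $(G^{2})^{(4)} = N \rtimes \{\mathrm{Id}\} = N$. Since $N$ is abelian, $(G^{2})^{(5)} = [N,N] = \{\mathrm{Id}\}$, so $\ell(G^{2}) \le 5$; and $(G^{2})^{(4)} = N \neq \{\mathrm{Id}\}$ forces $\ell(G^{2}) = 5$. The step I expect to be the main obstacle is verifying that the commutators generate all of $N$ rather than merely a subgroup of it: a priori the set $[N, L^{(j)}]$ only generates the additive subgroup spanned by the elements $(T-\mathrm{Id})u$, which over ${\mathbb C}$ could be small. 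The observation that resolves this is that for each fixed $T$ the set $\{(T-\mathrm{Id})u : u \in V\}$ is already a full complex-linear subspace $\mathrm{Im}(T-\mathrm{Id})$, so the generated subgroup is a genuine ${\mathbb C}$-subspace of $V$, to which irreducibility applies.
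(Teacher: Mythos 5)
Your proof is correct. The skeleton matches the paper's: both identify $N$ with $({\mathbb C}^{2},+)$, observe that the commutator of a lift of $T \in L^{(j)}$ with $\phi_{u} \in N$ is $\phi_{(\rho(T)-\mathrm{Id})u}$ (where $\rho$ is the conjugation action on $N$), and push this inductively down to $j=4$. The two arguments diverge only at the key step of showing these commutators generate all of $N$ rather than a proper subgroup. The paper invokes Corollary \ref{cor:lin2} to pin down $L^{(3)}=\{\mathrm{Id},-\mathrm{Id}\}$ and then uses the single matrix $-\mathrm{Id}$, which has no eigenvalue equal to $1$, so that $T^{t}-\mathrm{Id}$ is invertible and one commutator map is already surjective onto $N$. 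You instead note that $(G^{2})^{(j+1)}\cap N$ is an $L$-submodule of $V$ (normality of derived subgroups), nonzero because the action is faithful and $L^{(j)}\neq\{\mathrm{Id}\}$ for $j\leq 3$, hence equal to $V$ by irreducibility, which you correctly derive from $\ell(L)=4$. Your route buys independence from the explicit structure of $L^{(3)}$ — it needs only faithfulness, irreducibility and $\ell(L)=4$, and it delivers the stronger conclusion $(G^{2})^{(j)}=N\rtimes L^{(j)}$ for $1\leq j\leq 4$ — while the paper's route is shorter once Corollary \ref{cor:lin2} is available, since a single matrix does all the work. One cosmetic point: the commutators you should take are $[(w,T),(u,\mathrm{Id})]$ for an arbitrary lift $(w,T)\in (G^{2})^{(j)}$ of $T\in L^{(j)}$, not $[(0,T),(u,\mathrm{Id})]$, since $(0,T)$ need not lie in $(G^{2})^{(j)}$; because $N$ is abelian the commutator equals $((\rho(T)-\mathrm{Id})u,\mathrm{Id})$ for every $w$, so your computation and conclusion are unaffected.
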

\begin{proof}
We denote $G=G^{2}$.
The group $G^{(4)}$ is contained in the commutative group $N$.
Thus $G$ is solvable. We have $4 \leq \ell (G) \leq 5$.
It suffices to prove that $G^{(4)} \neq \{Id\}$.

Consider $T \in L$ and $\phi_{\lambda,\mu} \in N$. We have
\[ [T^{-1}, \phi_{\lambda,\mu}] =
T^{-1} \circ \phi_{\lambda,\mu} \circ T \circ \phi_{-\lambda,-\mu} =
\phi_{\lambda',\mu'} \]
where $\lambda' x + \mu' y = (\lambda x + \mu y) \circ T - (\lambda x + \mu y)$.
Indeed we obtain
\[
\left(
\begin{array}{c}
\lambda' \\
\mu'
\end{array}
\right) =
(T^{t} - Id) \left(
\begin{array}{c}
\lambda \\
\mu
\end{array}
\right)
\]
where $T^{t}$ is the transposed matrix of $T$.

Denote $N_{j} = G^{(j)} \cap N$ for $j \geq 0$.
The previous discussion implies that if $N_{j}=N$ and $L^{(j)}$
contains a matrix with no eigenvalue equal to $1$ then
$N_{j+1}=N$. Since $L^{(3)} = \{Id, -Id\}$ by Corollary \ref{cor:lin2}, we obtain
$G^{(4)} = N_{4} = N$.
\end{proof}
\begin{rem}
For the sake of clarity let us give an example of subgroup $L$
of $\mathrm{GL}(2,{\mathbb C})$ such that $\ell (L)=4$.
We define the group $L$ generated by
\[ \frac{1}{\sqrt{2}} \left(
\begin{array}{cc}
1+i & 0 \\
0 & 1-i
\end{array}
\right) \ \ \mathrm{and} \ \
\frac{1}{2} \left(
\begin{array}{rr}
1+ i & -1 + i  \\
1 + i & 1 - i
\end{array}
\right) , \]
cf. \cite{Shurman}[chapter 2, section 7, p. 44].
The group $L$ is a subgroup of $\mathrm{SL}(2, {\mathbb C})$.
It is a finite group with $48$ elements.
The projection $PL$ of $L$ in $\mathrm{PSL} (2, {\mathbb C})$ is a representation
of the group of orientation-preserving isometries of the cube (or the octahedron).
Moreover $PL$ is isomorphic to the group $S_{4}$
of permutations of $4$ elements.
\end{rem}
Next, we show the existence of an example of a solvable subgroup of
$\diff{}{n}$ whose length is equal to $2n+1$ for any $n \geq 2$.
\begin{pro}
\label{pro:mas2}
Let $H$ be a solvable subgroup of $\mathrm{Diff}({\mathbb C}^{n},0)$ with $n \geq 1$.
Then there exists a solvable subgroup $G$ of $\mathrm{Diff}({\mathbb C}^{n+1},0)$
such that $\ell (G) = \ell (H) + 2$. In particular we obtain
$\psi (n+1) \geq \psi(n) +2$ for any $n \in {\mathbb N}$.
\end{pro}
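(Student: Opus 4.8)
The plan is to construct $G$ as a semidirect product $N \rtimes \iota(H)$, imitating the construction of $G^{2}$ in Proposition \ref{pro:lg25} but now with the one-dimensional group $H$ promoted to act on an extra coordinate. First I would embed $H$ into $\mathrm{Diff}({\mathbb C}^{n+1},0)$ by letting it act on the first $n$ coordinates $(x_{1},\hdots,x_{n})$ and fixing the new variable $z$; call this copy $\iota(H)$. The role played by $N$ in Proposition \ref{pro:lg25} will be taken by a commutative unipotent group built from the radial-type vector field in the extra direction. Concretely I would set
\[ N = \left\{ {\rm exp}\left( h(x_{1},\hdots,x_{n})\, z \frac{\partial}{\partial z} \right) : h \in {\mathfrak m} \right\}, \]
where ${\mathfrak m}$ is the maximal ideal of ${\mathcal O}_{n}$, and define $G = N \rtimes \iota(H)$. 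The group $N$ is commutative (the generating vector fields all commute, since $[hz\partial_z, h'z\partial_z]=0$), it is normal in $G$ (conjugation by $\iota(\psi)$ sends $hz\partial_z$ to $(h\circ\psi^{-1})z\partial_z$, again in $N$), and it is clearly a subgroup of $\mathrm{Diff}({\mathbb C}^{n+1},0)$.

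The upper bound $\ell(G) \leq \ell(H)+2$ is the easy direction. Since $N$ is a normal commutative subgroup and $G/N \cong \iota(H) \cong H$, the derived series satisfies $G^{(\ell(H))} \subseteq N$, whence $G^{(\ell(H)+1)} \subseteq N^{(1)} = \{Id\}$; so a priori $\ell(G) \leq \ell(H)+1$. To get the correct statement I must show the derived length actually increases by exactly $2$, so the heart of the matter is the lower bound $\ell(G) \geq \ell(H)+2$, i.e. that $G^{(\ell(H)+1)}$ is still nontrivial inside $N$. For this I would compute, exactly as in Proposition \ref{pro:lg25}, the commutator of $\iota(\psi)$ with ${\rm exp}(hz\partial_z)$:
\[ [\iota(\psi)^{-1}, {\rm exp}(hz\partial_z)] = {\rm exp}\left( (h\circ\psi - h)\, z\frac{\partial}{\partial z} \right). \]
This identifies the action of $H$ on $N$ with the \emph{linear} action $h \mapsto h\circ\psi - h$ on the infinite-dimensional ${\mathbb C}$-vector space ${\mathfrak m}$, and the key point is to track how the subspace $N_{j} := G^{(j)}\cap N$ descends under this action through the derived series of $H$.

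The main obstacle, and the step I would spend the most care on, is verifying that $G^{(\ell(H))}\cap N$ is genuinely nonzero — equivalently, that the action of $H^{(\ell(H)-1)}$ on ${\mathfrak m}$ does not annihilate everything. The delicate feature is that $h\mapsto h\circ\psi-h$ has a large kernel (every $H$-invariant function), so I cannot argue naively. The plan is to choose the representation of $H$ on the ${\mathfrak m}$-side carefully: if $\varphi\in H^{(\ell(H)-1)}$ is a nontrivial element whose linear part has an eigenvalue $\zeta\neq 1$ acting on some coordinate monomial $x^{\alpha}$, then $x^{\alpha}\circ\varphi - x^{\alpha} = (\zeta^{?}-1)x^{\alpha}+\text{higher order}$ is nonzero, exhibiting a nonzero element of $N_{\ell(H)}$. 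Since $H$ is solvable of length exactly $\ell(H)$, its last nontrivial derived group $H^{(\ell(H)-1)}$ is nontrivial abelian, and I would use the presence of a non-unipotent element there (or, in the purely unipotent case, pass to an element with nontrivial higher jet and track the induced shift on an appropriate graded piece) to produce such an $x^{\alpha}$. Carrying out this eigenvalue/graded-piece bookkeeping — ensuring the relevant coefficient never collapses to zero — is the one genuinely technical point; everything else follows the template of Proposition \ref{pro:lg25}. The final sentence $\psi(n+1)\geq\psi(n)+2$ is then immediate by taking $H$ to realize $\psi(n)$.
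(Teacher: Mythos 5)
Your construction cannot work as stated, and you in fact derive the obstruction yourself before contradicting it. You take $N=\{{\rm exp}(h(x_{1},\hdots,x_{n})\,z\partial/\partial z): h\in{\mathfrak m}\}$, which is abelian, and $G=N\rtimes\iota(H)$ with $G/N\cong H$. Then $G^{(\ell(H))}\subseteq N$ and $[N,N]=\{Id\}$ force $\ell(G)\leq \ell(H)+1$, exactly as you note; there is no ``lower bound'' argument that can push this to $\ell(H)+2$, because the upper bound is already strictly smaller. The whole point of the target statement is that the gain is $+2$, and an abelian kernel can never deliver that. The paper's construction uses instead the full \emph{affine} group in the new variable: the fiber over $\phi\in H$ consists of all maps $x_{n+1}\mapsto a(x_{1},\hdots,x_{n})x_{n+1}+b(x_{1},\hdots,x_{n})$ with $a(0)\neq 0$, $b(0)=0$. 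This kernel is metabelian but non-abelian, which is what makes $\ell(G)\leq\ell(H)+2$ the correct upper bound and leaves room for equality.

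Your lower-bound strategy also has a gap independent of the choice of $N$. You propose to find a non-unipotent element of $H^{(\ell(H)-1)}$ whose linear part has an eigenvalue $\neq 1$, but such an element need not exist ($H$ may be unipotent), and your parenthetical fallback (``pass to an element with nontrivial higher jet and track the induced shift on an appropriate graded piece'') is precisely the technical content that needs to be supplied, not waved at. The paper handles all cases uniformly: for any $\phi_{0}\in H^{(\ell(H)-1)}\setminus\{Id\}$ it considers the operator $\Delta(f)=f\circ\phi_{0}-f$ on ${\mathcal O}_{n}\cap{\mathfrak m}$ and proves $\Delta$ is \emph{not nilpotent} via the Leibniz-type identity $\Delta(fg)=\Delta(f)\Delta(g)+\Delta(f)g+f\Delta(g)$, which yields $\Delta^{2k}(f^{2})=c_{2k,k,k}\,\Delta^{k}(f)^{2}\neq 0$ whenever $\Delta^{k}(f)\neq 0$ and $\Delta^{k+1}(f)=0$. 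Iterating commutators with $\phi_{0}$ then produces, inside $G^{(\ell(H))}$, both a nontrivial translation-type element $\chi_{1,b'}$ and a nontrivial multiplication-type element $\chi_{a',0}$ with $a'\not\equiv 1$ (the latter obtained by applying $\Delta$ to $\ln a$); these two do not commute, so $G^{(\ell(H)+1)}\neq\{Id\}$. You would need both the non-abelian kernel and this non-nilpotence argument to repair your proof.
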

\begin{proof}
The result is trivial if $H$ is the trivial group since
\[ \left\{ \left( x_{1},\hdots,x_{n}, \frac{\lambda x_{n+1}}{1 + t x_{n+1}} \right) :
\lambda \in {\mathbb C}^{*} \ \mathrm{and} \ t \in {\mathbb C} \right\} \]
is a subgroup of $\mathrm{Diff}({\mathbb C}^{n+1},0)$ of derived length $2$.
We suppose $H \neq \{Id\}$ from now on.

We define
\[ \chi_{a,b} = (x_{1}, \hdots, x_{n}, a(x_{1},\hdots,x_{n}) x_{n+1} + b(x_{1},\hdots,x_{n})) \]
where $a(0) \neq 0$ and $b (0)=0$.
Consider the group
\[ G = \{ (\phi (x_{1},\hdots,x_{n}), a(x_{1},\hdots,x_{n}) x_{n+1} + b(x_{1},\hdots,x_{n})) : \]
\[ \ \phi \in H, \ a \in {\mathcal O}_{n} \setminus {\mathfrak m} \
\mathrm{and} \ b \in {\mathcal O}_{n} \cap {\mathfrak m}  \}. \]
The group $G^{(\ell (H))}$ is contained in
$\{ \chi_{a,b} : a \in {\mathcal O}_{n} \setminus {\mathfrak m}, \
b \in {\mathcal O}_{n} \cap {\mathfrak m} \}$. We obtain
$\ell (G) \leq \ell (H) +2$.

Fix an element $\phi_{0} \in H^{(\ell (H) -1)} \setminus \{Id\}$.
Let $\vartheta$ be the element of $G$ defined by
\[ \vartheta (x_{1},\hdots,x_{n+1})= (\phi_{0}(x_{1},\hdots,x_{n}),x_{n+1}) . \]
It is clear that $\vartheta$ belongs to $G^{(l)}$ for any $0 \leq l < \ell (H)$.
We define the operator
$\Delta:  {\mathcal O}_{n} \cap {\mathfrak m} \to  {\mathcal O}_{n} \cap {\mathfrak m}$
by $\Delta (f) = f \circ \phi_{0} - f$.

 The equation
\[ [\vartheta^{-1}, \chi_{1,b}  ] =
\chi_{1,   b(x_{1},\hdots,x_{n}) \circ \phi_{0} - b(x_{1},\hdots, x_{n})} = \chi_{1, \Delta (b)} \]
implies that if $\chi_{1,b}$ belongs to $G^{(l)}$ for $l < \ell (H)$ then
$\chi_{1, \Delta (b)}$ belongs to $G^{(l+1)}$.
Suppose $\Delta^{k} \neq 0$ for any $k \geq 1$; we prove this property later on.
Then there exists an element $\chi_{1,b'}$ of $G^{(\ell(H))} \setminus \{Id\}$.

  Consider an element $a$ of ${\mathcal O}_{n}$ such that $a(0)=1$.
We denote by $\ln a$ the unique element of ${\mathfrak m}$ such that
$e^{\ln a} =a$. We have
\[  [\vartheta^{-1}, \chi_{e^{\ln a},0} ] = [\vartheta^{-1}, \chi_{a,0} ] =
\chi_{e^{\ln (a) \circ \phi_{0} - \ln (a)},0} = \chi_{e^{\Delta(\ln a)},0} . \]
Again the non-nilpotence of the operator $\Delta$ implies the existence of an element
$\chi_{a',0}$  in $G^{(\ell (H))}$ with $a' \not \equiv 1$.
The diffeomorphisms $\chi_{a',0}$ and $\chi_{1,b'}$ do not commute.
Therefore the group $G^{(\ell (H)+1)}$ is non-trivial.
Since $\ell (G) \leq \ell (H) +2$ we deduce $\ell (G)=\ell (H)+2$.

Let us show that $\Delta^{k} \not \equiv 0$ for every $k \geq 1$.
It is clear for $k=1$ since $\phi_{0} \neq Id$ implies
$\Delta (x_{j}) \neq 0$ for some $1 \leq j \leq n$.
We have
\[ \Delta (f g) = (f g) \circ \phi_{0} - fg =
(f \circ \phi_{0} -f)  (g \circ \phi_{0} -g) + (f \circ \phi_{0} -f) g +
f (g \circ \phi_{0} -g) \]
and then $\Delta (f g) = \Delta (f) \Delta (g) + \Delta (f) g + f \Delta (g)$.
Given $k \geq 1$ we obtain
\begin{equation}
\label{equ:deltak}
\Delta^{k} (f g)= \sum_{k \leq m+l, \ 0 \leq m \leq k, \ 0 \leq l \leq k} c_{kml} \Delta^{m}(f) \Delta^{l}(g)
\end{equation}
where $c_{kml}$ is a positive integer number independent of $f$ and $g$ for
$k \leq m+l$, $0 \leq m \leq k$ and $0 \leq l \leq k$.
In order to show that $\Delta$ is not nilpotent it suffices to prove that
if $\Delta^{k}(f) \neq 0$ and $\Delta^{k+1}(f)=0$ then
$\Delta^{2k}(f^{2}) \neq 0$.
Equation (\ref{equ:deltak}) implies
$\Delta^{2k}(f^{2}) = c_{2k,k,k} \Delta^{k}(f)^{2}$ and hence
$\Delta^{2k}(f^{2}) \neq 0$.
\end{proof}
\begin{rem}
Consider the subgroup $G^{n}$
of $\mathrm{Diff}({\mathbb C}^{n},0)$ consisting of elements of the form
 \[
 (\phi (x_{1},x_{2}), \hdots,  a_{j}(x_{1},\hdots, x_{j-1}) x_{j} + b_{j}(x_{1},\hdots, x_{j-1}), \hdots)
   \]
where $\phi \in G^{2}$ and $a_{j} \in {\mathcal O}_{j-1} \setminus {\mathfrak m}$,
$b_{j} \in {\mathcal O}_{j-1} \cap {\mathfrak m}$ for any $3 \leq j \leq n$.
The construction in Proposition \ref{pro:mas2} implies that
$G^{n}$ is a solvable group of derived length $2n+1$ for any $n \geq 3$.
\end{rem}

\bibliography{rendu}

\def\cprime{$'$} \def\cprime{$'$} \def\cprime{$'$} \def\cprime{$'$}
  \def\cprime{$'$} \def\cprime{$'$}
\begin{thebibliography}{10}

\bibitem{Borel}
Armand Borel.
\newblock {\em Linear algebraic groups}, volume 126 of {\em Graduate Texts in
  Mathematics}.
\newblock Springer-Verlag, New York, second edition, 1991.

\bibitem{Cantat-Cerveau}
Serge Cantat and Dominique Cerveau.
\newblock Analytic actions of mapping class groups on surfaces.
\newblock {\em J. Topol.}, 1(4):910--922, 2008.

\bibitem{Ce-Ma:Ast}
{Cerveau D. , Mattei J.-F.}
\newblock Formes int\'{e}grables holomorphes singuli\`{e}res.
\newblock {\em Asterisque}, (97):193 pp., 1982.

\bibitem{Corwin-Greenleaf}
Lawrence~J. Corwin and Frederick~P. Greenleaf.
\newblock {\em Representations of nilpotent {L}ie groups and their
  applications. {P}art {I}}, volume~18 of {\em Cambridge Studies in Advanced
  Mathematics}.
\newblock Cambridge University Press, Cambridge, 1990.
\newblock Basic theory and examples.

\bibitem{Dixon-solvable}
John~D. Dixon.
\newblock The solvable length of a solvable linear group.
\newblock {\em Math. Z.}, 107:151--158, 1968.

\bibitem{Ecalle}
J.~{\'E}calle.
\newblock Th\'eorie it\'erative: introduction \`a la th\'eorie des invariants
  holomorphes.
\newblock {\em J. Math. Pures Appl. (9)}, 54:183--258, 1975.

\bibitem{Franks-Handel:va}
John Franks and Michael Handel.
\newblock Some virtually abelian subgroups of the group of analytic symplectic
  diffeomorphisms of a surface.
\newblock {\em J. Mod. Dyn.}, 7(3):369--394, 2013.

\bibitem{Ghys-identite}
{\'E}tienne Ghys.
\newblock Sur les groupes engendr\'es par des diff\'eomorphismes proches de
  l'identit\'e.
\newblock {\em Bol. Soc. Brasil. Mat. (N.S.)}, 24(2):137--178, 1993.

\bibitem{Humphreys}
James~E. Humphreys.
\newblock {\em Linear algebraic groups}.
\newblock Springer-Verlag, New York, fourth printing, revised edition, 1995.
\newblock Graduate Texts in Mathematics, No. 21.

\bibitem{Ilya-Yako}
Yulij Ilyashenko and Sergei Yakovenko.
\newblock {\em Lectures on analytic differential equations}, volume~86 of {\em
  Graduate Studies in Mathematics}.
\newblock American Mathematical Society, Providence, RI, 2008.

\bibitem{Kolchin-PV}
E.~R. Kolchin.
\newblock Algebraic matric groups and the {P}icard-{V}essiot theory of
  homogeneous linear ordinary differential equations.
\newblock {\em Ann. of Math. (2)}, 49:1--42, 1948.

\bibitem{Loray5}
F.~Loray.
\newblock Cinq le\c{c}ons sur la structure transverse d'une singularit\'{e} de
  feuilletage holomorphe en dimension 2 complexe.
\newblock {\em Monographies Red TMR Europea Sing. Ec. Dif. Fol.}, (1):1--92,
  1999.

\bibitem{MaRa:aen}
J.~Martinet and J.-P. Ramis.
\newblock Classification analytique des \'{e}quations differentielles non
  lin\'{e}aires r\'{e}sonnantes du premier ordre.
\newblock {\em Ann. Sci. Ecole Norm. Sup.}, 4(16):571--621, 1983.

\bibitem{MaMo:Aen}
J.-F. Mattei and R.~Moussu.
\newblock Holonomie et int\'egrales premi\`eres.
\newblock {\em Ann. Sci. \'Ecole Norm. Sup. (4)}, 13(4):469--523, 1980.

\bibitem{Mor-Ram-Simo}
Juan~J. Morales-Ruiz, Jean-Pierre Ramis, and Carles Simo.
\newblock Integrability of {H}amiltonian systems and differential {G}alois
  groups of higher variational equations.
\newblock {\em Ann. Sci. \'Ecole Norm. Sup. (4)}, 40(6):845--884, 2007.

\bibitem{Newman}
M.~F. Newman.
\newblock The soluble length of soluble linear groups.
\newblock {\em Math. Z.}, 126:59--70, 1972.

\bibitem{Paul:pre}
E.~Paul.
\newblock Feuilletages holomorphes singuliers \`a holonomie r\'esoluble.
\newblock {\em J. Reine Angew. Math.}, 514:9--70, 1999.

\bibitem{RR:arxiv}
J.C. Rebelo and H.~Reis.
\newblock Discrete orbits, recurrence and solvable subgroups of
  $\mathrm{Diff}({{\mathbb C}}^{2},0)$.
\newblock {\em arxiv.org/abs/1406.0461}, 2014.

\bibitem{JR:arxivdl}
Javier Rib\'{o}n and Mitchael Martelo.
\newblock Derived length of solvable groups of local diffeomorphisms.
\newblock {\em Mathematische Annalen}, 358(3):701--728, 2014.

\bibitem{Serre.Lie}
Jean-Pierre Serre.
\newblock {\em Lie algebras and {L}ie groups}, volume 1500 of {\em Lecture
  Notes in Mathematics}.
\newblock Springer-Verlag, Berlin, second edition, 1992.
\newblock 1964 lectures given at Harvard University.

\bibitem{Shurman}
Jerry Shurman.
\newblock {\em Geometry of the quintic}.
\newblock A Wiley-Interscience Publication. John Wiley \& Sons Inc., New York,
  1997.

\bibitem{Waerden}
B.~L. van~der Waerden.
\newblock {\em Modern {A}lgebra. {V}ol. {I}}.
\newblock Frederick Ungar Publishing Co., New York, N. Y., 1949.
\newblock Translated from the second revised German edition by Fred Blum, With
  revisions and additions by the author.

\bibitem{Wehrfritz}
B.~A.~F. Wehrfritz.
\newblock {\em Infinite linear groups. {A}n account of the group-theoretic
  properties of infinite groups of matrices}.
\newblock Springer-Verlag, New York, 1973.
\newblock Ergebnisse der Matematik und ihrer Grenzgebiete, Band 76.

\bibitem{Zassenhaus}
Hans Zassenhaus.
\newblock Beweis eines satzes \"uber diskrete gruppen.
\newblock {\em Abh. Math. Sem. Univ. Hamburg}, 12(1):289--312, 1937.

\bibitem{Ziglin1}
S.~L. Ziglin.
\newblock Bifurcation of solutions and the nonexistence of first integrals in
  {H}amiltonian mechanics. {I} and {II}.
\newblock {\em Funktsional. Anal. i Prilozhen.}, 16(3):30--41, 96, 1982.

\bibitem{Ziglin2}
S.~L. Ziglin.
\newblock Bifurcation of solutions and the nonexistence of first integrals in
  {H}amiltonian mechanics. {I} and {II}.
\newblock {\em Funktsional. Anal. i Prilozhen.}, 17(1):8--23, 1983.

\end{thebibliography}
\end{document}